\documentclass{amsart}
\usepackage{graphicx}
\usepackage{tikz-cd}
\usepackage{xcolor}
\usepackage{comment}
\usepackage[T1]{fontenc}
\usepackage{xypic}

\usepackage{amsmath,amsthm,amssymb,amscd,enumerate}
\usepackage{enumitem}

\usetikzlibrary{decorations.pathmorphing}

\newtheorem{theorem}{Theorem}[section]
\newtheorem{proposition}[theorem]{Proposition}
\newtheorem{lemma}[theorem]{Lemma}
\newtheorem{corollary}[theorem]{Corollary}
\newtheorem{mthm}{Theorem}

\theoremstyle{definition}
\newtheorem{definition}[theorem]{Definition}
\newtheorem{example}[theorem]{Example}

\theoremstyle{remark}

\newtheorem{remark}[theorem]{Remark}

\numberwithin{equation}{section}

\allowdisplaybreaks[1]

\newcommand{\QQ}{\mathbb{Q}}
\newcommand{\RR}{\mathbb{R}}
\newcommand{\ZZ}{\mathbb{Z}}
\newcommand{\NN}{\mathbb{N}}

\newcommand{\QQQ}{\mathrm{Q}}

\newcommand{\rQQQ}{\widehat{\mathrm{Q}}}
\newcommand{\HHH}{\mathrm{H}}
\newcommand{\CCC}{\mathrm{C}}
\newcommand{\ZZZ}{\mathrm{Z}}
\newcommand{\BBB}{\mathrm{B}}

\DeclareMathOperator{\cl}{\mathrm{cl}}
\DeclareMathOperator{\scl}{\mathrm{scl}}

\DeclareMathOperator{\Ker}{\mathrm{Ker}}
\DeclareMathOperator{\Coker}{\mathrm{Coker}}

\newcommand{\Gg}{G}
\newcommand{\Ng}{N}

\newcommand{\Hg}{H}
\newcommand{\Kg}{K}
\newcommand{\Gam}{\Gamma}

\newcommand{\CG}{[\Gg,\Gg]}
\newcommand{\CGN}{[\Gg,\Ng]}

\newcommand{\sclG}{\scl_{\Gg}}

\newcommand{\sclGN}{\scl_{\Gg,\Ng}}

\newcommand{\barsclG}{\overline{\scl}_{\Gg}}
\newcommand{\barsclGN}{\overline{\scl}_{\Gg,\Ng}}

\newcommand{\xl}{x}
\newcommand{\yl}{y}

\newcommand{\gl}{g}
\newcommand{\hl}{h}

\newcommand{\xbr}{\check{x}}

\newcommand{\nuf}{\nu}
\newcommand{\muf}{\mu}
\newcommand{\psf}{\psi}
\newcommand{\phf}{\phi}

\newcommand{\ppi}{p}

\newcommand{\QNG}{\QQQ(\Ng)^{\Gg}}

\newcommand{\WW}{\mathrm{W}}
\newcommand{\WGN}{\WW(\Gg,\Ng)}

\newcommand{\DD}{D}

\newcommand{\genus}{g}
\newcommand{\Sg}{\Sigma_{\genus}}

\newcommand{\ChGN}{\mathcal{C}^{\mathrm{h}}(\Gg,\Ng)}
\newcommand{\ChG}{\mathcal{C}^{\mathrm{h}}(\Gg)}
\newcommand{\barChGN}{\overline{\mathcal{C}}^{\mathrm{h}}(\Gg,\Ng)}
\newcommand{\barChG}{\overline{\mathcal{C}}^{\mathrm{h}}(\Gg)}

\newcommand{\Rdim}{\dim_{\mathbb{R}}}
\newcommand{\op}{\mathrm{op}}
\newcommand{\hhh}{\mathrm{h}}

\newcommand{\CR}{\mathcal{C}_{\mathbb{R}}}
\newcommand{\CQ}{\mathcal{C}_{\mathbb{Q}}}
\newcommand{\CZ}{\mathcal{C}_{\mathbb{Z}}}

\makeatletter
\@namedef{subjclassname@2020}{%
  \textup{2020} Mathematics Subject Classification}
\makeatother

\keywords{stable commutator length, coarse geometry, quasimorphisms, Bavard duality theorem}
\subjclass[2020]{Primary 51F30; Secondary 20F65, 46A20, 20F69}

\title[Coarse geometry of mixed scl I]{Coarse geometry of stable mixed commutator length  I:  duality and  functional analysis on chains}

\author[M. Kawasaki]{Morimichi Kawasaki}
\address[Morimichi Kawasaki]{Department of Mathematics, Faculty of Science, Hokkaido University, North 10, West 8, Kita-ku, Sapporo, Hokkaido 060-0810, Japan.}
\email{kawasaki@math.sci.hokudai.ac.jp}

\author[M. Kimura]{Mitsuaki Kimura}
\address[Mitsuaki Kimura]{Department of Mathematics, Osaka Dental University
 8-1 Kuzuha-hanazono-cho, Hirakata, Osaka 573-1121 Japan}
\email{kimura-m@cc.osaka-dent.ac.jp}

\author[S. Maruyama]{Shuhei Maruyama}
\address[Shuhei Maruyama]{ School of Mathematics and Physics, College of Science and Engineering, Kanazawa University, Kakuma-machi, Kanazawa, Ishikawa, 920-1192, Japan}
\email{smaruyama@se.kanazawa-u.ac.jp}

\author[T. Matsushita]{Takahiro Matsushita}
\address[Takahiro Matsushita]{Department of Mathematical Sciences, Shinshu University, Matsumoto, Nagano 390-8621, Japan}
\email{matsushita@shinshu-u.ac.jp}

\author[M. Mimura]{Masato Mimura}
\address[Masato Mimura]{Mathematical Institute, Tohoku University, 6-3, Aramaki Aza-Aoba, Aoba-ku, Sendai 980-8578, Japan}
\email{m.masato.mimura.m@tohoku.ac.jp}

\begin{document}

\begin{abstract}
Let $G$ be a group and $N$ its normal subgroup. On the mixed commutator subgroup $[G,N]$, the mixed stable commutator length $\mathrm{scl}_{G,N}$ and the restriction of the ordinary stable commutator length $\mathrm{scl}_{G}$ are defined. We characterize when they are bi-Lipschitz equivalent by the vanishing of  a certain $\RR$-linear space $\mathrm{W}(G,N)$ related to invariant quasimorphisms. For the proof, we obtain a refined version of the generalized mixed Bavard duality theorem, and perform functional analysis on the completion of a certain space of $1$-chains.
\end{abstract}

\maketitle

\section{Introduction: bi-Lipschitz comparison problem}
\label{section=Intro}
\subsection{Main result: bi-Lipschitz comparison in full generality}\label{subsec=Intro}
The main theme of the present work is the \emph{large-scale behavior} of the stable mixed commutator length $\scl_{\Gg,\Ng}$ (``mixed scl'') on the mixed commutator subgroup $[\Gg,\Ng]$. More precisely, we are interested in the comparison of it with the ordinary stable commutator length $\scl_{\Gg}$ (``scl'') on the ordinary commutator subgroup $[\Gg,\Gg]$, restricted to $[\Gg,\Ng]$. In this Part I paper, we build the general theory for a pair $(\Gg,\Ng)$ of a group $\Gg$ and its normal subgroup $\Ng$ in full generality; in Part II paper, we will discuss the special case where $\Gam=\Gg/\Ng$ is  nilpotent  in more detail.

Throughout this introduction, let $\Gg$ be a group and $\Ng$ its normal subgroup. Let $\mathcal{S}_{\Gg,\Ng}=\{[\gl,\xl]=\gl\xl\gl^{-1}\xl^{-1}\,|\,\gl\in \Gg,\xl\in \Ng\}$ be the set of simple $(\Gg,\Ng)$-commutators. The \emph{mixed commutator subgroup} $[\Gg,\Ng]$ is the group generated by $\mathcal{S}_{\Gg,\Ng}$; the \emph{mixed commutator length} $\cl_{\Gg,\Ng}\colon [\Gg,\Ng]\to \ZZ_{\geq 0}$ is the word length with respect to $\mathcal{S}_{\Gg,\Ng}$, namely, for $\yl\in [\Gg,\Ng]$ we define
\[
\cl_{\Gg,\Ng}(\yl)=\min \{n\in \ZZ_{\geq 0}\;|\; \textrm{there exist }\yl_1,\ldots,\yl_n\in \mathcal{S}_{\Gg,\Ng}\textrm{ such that }\yl=\yl_1\cdots \yl_n\}.
\]
\begin{definition}[$\sclGN$]\label{defn=mixedscl}
The \emph{stable mixed commutator length} $\sclGN$ is defined by 
\[
\scl_{\Gg,\Ng}\colon [\Gg,\Ng]\to \RR_{\geq 0};\quad \scl_{\Gg,\Ng}(\yl)=\lim_{k\to \infty}\frac{\cl_{\Gg,\Ng}(\yl^k)}{k}.
\]
\end{definition}
For the case of $\Ng=\Gg$, $\cl_{\Gg,\Gg}$ and $\scl_{\Gg,\Gg}$ coincide with the (ordinary) commutator length $\cl_{\Gg}$ and stable commutator length $\scl_{\Gg}$, respectively. We refer the reader to \cite{Calegari} for backgrounds of $\scl_{\Gg}$, and to \cite{KKMMMsurvey} for a survey on $\scl_{\Gg,\Ng}$. Here, we emphasize that $\scl_{\Gg,\Ng}$ has a strong connection to \emph{invariant quasimorphisms} by the mixed Bavard duality theorem
\begin{equation}\label{eq=mixedBavard}
\scl_{\Gg,\Ng}(\yl)=\sup_{[\muf]\in (\QQQ(\Ng)^{\Gg}/\HHH^1(\Ng)^{\Gg})\setminus \{0\}}\frac{|\muf(\yl)|}{2\DD(\muf)};
\end{equation}
see Subsection~\ref{subsec=Bavard} for more details. 
Here, $\QQQ(\Ng)^{\Gg}$ denotes the space of homogeneous $\Gg$-invariant quasimorphisms on $\Ng$; as is recalled in the next subsection.

In this introduction, we present our first main result on the \emph{bi-Lipschitz comparison problem} between the ``mixed scl'' $\scl_{\Gg,\Ng}$ and the ``scl'' $\scl_{\Gg}$ on $[\Gg,\Ng]$, which asks when there exists  $C\in \RR_{\geq 1}$  such that for every $\yl\in [\Gg,\Ng]$,
\begin{equation}\label{eq=biLipscl}
\scl_{\Gg,\Ng}(\yl)\leq C\cdot \scl_{\Gg}(\yl)
\end{equation}
holds. 
We note that the inequality  $\scl_{\Gg,\Ng}(\yl)\geq \scl_{\Gg}(\yl)$ in the opposite direction always holds since  $\mathcal{S}_{\Gg,\Ng}\subseteq \mathcal{S}_{\Gg,\Gg}$. 
Now we present our first main result, which solves this bi-Lipschitz comparison problem in the full generality. 

\begin{mthm}[bi-Lipschitz comparison in the full generality]\label{mthm=main_biLip}
Let $\Gg$ be  a group and $\Ng$ its normal subgroup. Then, the following two conditions are equivalent.
\begin{enumerate}[label=\textup{(\roman*)}]
  \item $\scl_{\Gg}$ and $\scl_{\Gg,\Ng}$ are bi-Lipschitz equivalent on $[\Gg,\Ng]$, namely, there exists  $C\in \RR_{\geq 1}$  such that for every $\yl\in [\Gg,\Ng]$ we have \eqref{eq=biLipscl}.
  \item $\WW(\Gg,\Ng)=0$.
\end{enumerate}
Here, $\WW(\Gg,\Ng)$ is \relax a quotient $\RR$-linear space defined in Definition~\textup{\ref{defn=W}}.
\end{mthm}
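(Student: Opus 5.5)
The plan is to identify $\WW(\Gg,\Ng)$ with the quotient
\[
\QNG\big/\bigl(\HHH^1(\Ng)^{\Gg}+i^*\QQQ(\Gg)\bigr),
\]
where $i^*$ restricts homogeneous quasimorphisms on $\Gg$ to $\Ng$. In words, $\WW(\Gg,\Ng)$ measures the $\Gg$-invariant quasimorphisms on $\Ng$ that do not extend to $\Gg$, even after correcting by a $\Gg$-invariant homomorphism. If Definition~\ref{defn=W} is phrased differently, I would first prove it is equivalent to this description. Both implications then run through the mixed Bavard duality \eqref{eq=mixedBavard} and its ordinary counterpart for $\scl_{\Gg}$. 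One basic fact is used throughout: every $h\in\HHH^1(\Ng)^{\Gg}$ vanishes on $[\Gg,\Ng]$, since $h(\gl\xl\gl^{-1}\xl^{-1})=h(\gl\xl\gl^{-1})-h(\xl)=0$.

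\emph{(ii)$\Rightarrow$(i).} Equip $\QNG/\HHH^1(\Ng)^{\Gg}$ and $\QQQ(\Gg)/\HHH^1(\Gg)$ with the defect norm $\DD$; both are Banach spaces by the standard argument. Assuming $\WW(\Gg,\Ng)=0$, the restriction map
\[
\QQQ(\Gg)/\HHH^1(\Gg)\to \QNG/\HHH^1(\Ng)^{\Gg}
\]
is a bounded surjection, because $\DD(\phf|_{\Ng})\le\DD(\phf)$. By the open mapping theorem there is a constant $C$ with the following property: every $\muf$ equals $\phf|_{\Ng}+h$ for some $h\in\HHH^1(\Ng)^{\Gg}$ and some $\phf$ with $\DD(\phf)\le C\,\DD(\muf)$. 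Since $h$ vanishes on $[\Gg,\Ng]$, we get $\muf(\yl)=\phf(\yl)$ for $\yl\in[\Gg,\Ng]$. Hence
\[
\frac{|\muf(\yl)|}{2\DD(\muf)}\le C\,\frac{|\phf(\yl)|}{2\DD(\phf)}\le C\,\scl_{\Gg}(\yl),
\]
and taking the supremum in \eqref{eq=mixedBavard} gives \eqref{eq=biLipscl}.

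\emph{(i)$\Rightarrow$(ii).} This is the hard direction. I would dualize through a space of chains. Let $\mathcal{C}$ be a space of real $1$-chains on $\Ng$ on which $\scl_{\Gg,\Ng}$ and $\scl_{\Gg}$ extend to seminorms; the natural choice is chains whose image in the relevant abelianizations is null, modulo the homogenization relations $\xl^n-n\xl$ and $\gl\xl\gl^{-1}-\xl$. The first step is a refined generalized mixed Bavard duality for chains, together with the ordinary one for $\scl_{\Gg}$. It would say that $\scl_{\Gg,\Ng}$ on chains is the supremum of $|\muf(c)|/2\DD(\muf)$ over $\muf\in\QNG$. It would also say that linear functionals on chains bounded by a multiple of $\scl_{\Gg}$ are exactly restrictions of elements of $\QQQ(\Gg)$ plus homomorphisms.

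The step I expect to be the main obstacle is upgrading the inequality \eqref{eq=biLipscl} from elements of $[\Gg,\Ng]$ to chains. I would first try to approximate a chain $\xl_1+\cdots+\xl_k$ by single elements such as $\xl_1^{n}\,\gl\xl_2^{n}\gl^{-1}\cdots$ with suitable conjugators, chosen so that both scl's of the product are asymptotically $n$ times the chain values. This is where working in the completion of the chain space is needed, to control error terms uniformly.

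Once the chain inequality $\scl_{\Gg,\Ng}\le C\,\scl_{\Gg}$ is available, the argument closes as follows. Any $\muf\in\QNG$ defines a functional on chains bounded by $2\DD(\muf)\,\scl_{\Gg,\Ng}\le 2C\DD(\muf)\,\scl_{\Gg}$. By Hahn--Banach it extends to the corresponding chain space for $\Gg$ with the same bound. By the duality for $\scl_{\Gg}$, this extension comes from some $\phf\in\QQQ(\Gg)$. Comparing the two on $\Ng$ shows that $\muf-\phf|_{\Ng}$ is bounded and homogeneous, hence lies in $\HHH^1(\Ng)^{\Gg}$. Therefore $\WW(\Gg,\Ng)=0$.
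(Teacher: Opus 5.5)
Your outline is correct. For (i)$\Rightarrow$(ii) it takes a genuinely different and more direct route than the paper.

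The paper argues contrapositively. From $\WGN\neq 0$ it passes to the completions $\barChGN$ and $\barChG$, identifies $(\overline{\iota})^{\dagger}=\widehat{i^{\ast}}$, and uses Banach's closed range theorem to split into two cases. If the range is closed, \eqref{eq=kernelcokernel} yields some $0\neq\alpha\in\Ker(\overline{\iota})$; if it is not closed, $\overline{\iota}$ is not bounded below. In both cases the paper pulls back to $\CGN$ via Lemma~\ref{lem=approximation_scaling}. This case analysis genuinely needs completeness (Remark~\ref{rem=complete}), since kernel elements are in general limit objects (cf.\ Example~\ref{exa=iotakernel}).

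You argue forwards instead. Once the inequality holds on chains, $\iota$ is bounded below, and ``bounded below $\Rightarrow$ adjoint surjective'' is plain Hahn--Banach, valid on the non-complete space $(\ChG,\sclG)$. So you need no completion, no closed range theorem and no case split. Your route also gives a quantitative statement: every $\muf\in\QNG$ equals $\phf|_{\Ng}$ modulo $\HHH^1(\Ng)^{\Gg}$ for some $\phf$ with $\DD(\phf)\le C\,\DD(\muf)$. Together with your open-mapping argument for (ii)$\Rightarrow$(i), this shows that the best constant in (i) equals the best extension constant. (That argument is fine, given the completeness of $\QNG/\HHH^1(\Ng)^{\Gg}$ from Corollary~\ref{cor=QNBanach}.) What the paper's route buys is structure beyond Theorem~\ref{mthm=main_biLip}: the kernel $\Ker(\overline{\iota})$, the isometry $\Ker(\overline{\iota})^{\ast}\cong\WGN$ in the closed-range case, and explicit witness sequences. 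These are reused in Theorems~\ref{mthm=main_Qcoarsekernel}, \ref{thm=cCKextend} and \ref{thm=crushing}.

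The step you flag as the main obstacle is short; it is essentially Lemma~\ref{lem=approximation_scaling}. It needs neither conjugators nor completions, but you should write it out, because membership in $\CGN$ is what makes (i) applicable.
\begin{itemize}
\item For $c\in\CQ(\Gg,\Ng)$, choose $l$ with $lc=\xl_1+\cdots+\xl_m-\xbr_1-\cdots-\xbr_{m'}\in\CZ(\Gg,\Ng)$.
\item By Lemma~\ref{lem=CGNchain}, and since $\Ng/\CGN$ is abelian, $\yl_t=\xl_1^t\cdots\xl_m^t\xbr_1^{-t}\cdots\xbr_{m'}^{-t}\in\CGN$.
\item Theorem~\ref{thm=gmBavard} gives $\sclG(tlc-\yl_t)\le\sclGN(tlc-\yl_t)\le(m+m')/2$. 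Hence $tl\,\sclGN(c)\le C\,tl\,\sclG(c)+(C+1)(m+m')/2$.
\item Letting $t\to\infty$ yields $\sclGN(c)\le C\,\sclG(c)$ with the same $C$. The error depends on $c$ but not on $t$, so no uniform control is needed.
\item Density of rational chains then extends the inequality to $\ChGN$.
\end{itemize}

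Finally, in your last step the correct conclusion is that $\muf-\phf|_{\Ng}$ vanishes on the chains $\xl_1\xl_2-\xl_1-\xl_2\in\CR(\Gg,\Ng)$, and so is a $\Gg$-invariant homomorphism. ``Bounded and homogeneous'' would force it to be zero.
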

\relax Roughly, $\WW(\Gg,\Ng)$ is the space of non-extendable invariant quasimorphisms modulo invariant homomorphisms. On Theorem~\ref{mthm=main_biLip}, let us emphasize the following three points; we will discuss more details in Remark~\ref{rem=dimW} and Subsection~\ref{subsec=previous}.

\begin{enumerate}[label=(\arabic*)]
  \item The direction of ``(ii) implies (i)'' was proved in the previous work \cite{KKMMM}. Hence, the \emph{novel part of Theorem~\textup{\ref{mthm=main_biLip}} is the direction of ``\textup{(i)} implies \textup{(ii)}.''} 
  \item The space $\WW(\Gg,\Ng)$ may be understood better than mixed scl. The background here is that $\WW(\Gg,\Ng)$ is related to (ordinary and bounded)  group cohomology, and hence that the homological algebraic method can be applied. From this viewpoint, Theorem~\ref{mthm=main_biLip} may be regarded as a result of providing information of (mixed) scl from that of $\WGN$.
  \item In (1), ``(i) implies (ii)'' is equivalent to saying that ``not (ii) implies not (i).'' From the perspective of (2), to show this novel direction  of Theorem~\ref{mthm=main_biLip} we are required to construct a witness of non-bi-Lipschitz-equivalence between $\scl_{\Gg}$ and $\scl_{\Gg,\Ng}$ on $\CGN$, provided that $\WGN\ne 0$. In \cite{KK} and \cite{MMM}, constructions of such witnesses were done for specific pairs of $(\Gg,\Ng)$, respectively. However, in both work, they were able to do so because they had obtained  an explicit representative of a non-zero element in $\WGN$. 

The challenge in the present work, to prove Theorem~\ref{mthm=main_biLip},  is to construct a witness of ``not (i)'' under the assumption of $\WGN\ne 0$, where \emph{no} further piece of information on a non-zero element of $\WGN$ is given.\end{enumerate}

Additionally, we remark  that the QI (quasi-isometry) comparison of scl reduces to the bi-Lipschitz comparison above due to the following semi-homogeneity: for every $\yl\in [\Gg,\Ng]$ and for every $k\in \ZZ$, $\scl_{\Gg,\Ng}(\yl^k)=|k|\cdot \scl_{\Gg,\Ng}(\yl)$.

\subsection{Invariant quasimorphisms and $\WGN$}\label{subsec=W}
We will present a detailed definition of  the $\RR$-linear  space $\WW(\Gg,\Ng)$, which appears in the statement of Theorem~\ref{mthm=main_biLip};  this space is related to homogeneous \emph{$\Gg$-invariant quasimorphisms} on $\Ng$. 
\begin{definition}\label{defn=qm}
Let $\Gg$ be a group and $\Ng$ its normal subgroup.
\begin{enumerate}[label=(\arabic*)]
  \item For a function $f\colon\Gg\to\RR$,  the \emph{defect} of $f$ is defined by
\begin{equation*}\label{eq=defect}
 \DD(f)=\sup\left\{\left|f(\gl_1\gl_2)-f(\gl_1)-f(\gl_2)\right| \,\middle|\, \gl_1,\gl_2\in \Gg\right\} \in \RR_{\geq 0}\cup\{\infty\}. 
\end{equation*}
  \item A function $\psf\colon \Gg\to \RR$ is called a \emph{quasimorphism} on $\Gg$ if $\DD(\psf)<\infty$.
  \item A function from $\Gg$ to $\RR$ is said to be \emph{homogeneous} if its restriction to every cyclic subgroup is a homomorphism. 
  \item The symbol $\QQQ(\Gg)$ denotes the $\RR$-linear space of homogeneous quasimorphisms on $\Gg$. 
\end{enumerate}
\end{definition}
We note that $\QQQ(\Gg)\supseteq \HHH^1(\Gg)$, where  we identify the first group cohomology $\HHH^1(\Gg)=\HHH^1(\Gg;\RR)$ with  the $\RR$-linear space $\mathrm{Hom}(\Gg,\RR)$ of group homomorphisms from $\Gg$ to $\RR$. For the pair $(\Gg,\Ng)$, we equip the space of $\RR$-valued functions on $\Ng$ with the adjoint action of $\Gg$, and define $\QQQ(\Ng)^{\Gg}$ and $\HHH^1(\Ng)^{\Gg}$ as the $\Gg$-invariant parts of $\QQQ(\Ng)$ and $\HHH^1(\Ng)$; the concrete definition goes as follows.

\begin{definition}\label{defn=QNG}
Let $\Gg$ be a group and $\Ng$ its normal subgroup. Then, define 
\[
\QQQ(\Ng)^{\Gg}=\{\muf \in \QQQ(\Ng)\;|\; \muf(\gl\xl\gl^{-1})=\muf({\xl})
\mathrm{\ for\ every\ }\gl\in \Gg\ \mathrm{and\ every\ }\xl\in \Ng\}.
\]
\end{definition}
Because $\QQQ(\Gg)=\QQQ(\Gg)^{\Gg}$ (Lemma~\ref{lem=quasiinv}), the inclusion map $i\colon \Ng\hookrightarrow \Gg$ induces an $\RR$-linear map $i^{\ast}\colon \QQQ(\Gg)=\QQQ(\Gg)^{\Gg}\to \QQQ(\Ng)^{\Gg}$;  $\phi \mapsto \phi|_{\Ng}$. Now, \relax we define the $\RR$-linear space $\WW(\Gg,\Ng)$ as follows.

\begin{definition}[\relax $\WGN$]\label{defn=W}
Let $\Gg$ be a group and $\Ng$ its normal subgroup. Then, we define the quotient  space $\WGN$ of $\QQQ(\Ng)^{\Gg}$ by
\[
\WW(\Gg,\Ng)=\QQQ(\Ng)^{\Gg}/\left(\HHH^1(\Ng)^{\Gg}+i^{\ast}\QQQ(\Gg)\right).
\] 
\end{definition}

Roughly speaking,  homogeneous invariant quasimorphisms that are \emph{extendable to $\Gg$} (elements in $i^{\ast}\QQQ(\Gg)$) and  invariant homomorphisms (elements in $\HHH^1(\Ng)^{\Gg}$)  are both `uninteresting' invariant quasimorphisms: for the  former  ones, due to Lemma~\ref{lem=quasiinv} there is no mystery for the $\Gg$-invariance. At the level of $\WGN$, exactly linear combinations of them represent the zero element.

\begin{remark}\label{rem=dimW}
\relax As we mentioned in Subsection~\ref{subsec=Intro}, the homological algebraic method may be used to study $\WGN$. As an outcome, the dimension of $\WW(\Gg,\Ng)$ is  computable for several pairs $(\Gg,\Ng)$ as follows. Assume that $\Gam=\Gg/\Ng$ is amenable, or, more generally that $\HHH^2_b(\Gam)=\HHH^3_b(\Gam)=0$, where $\HHH^{\bullet}_b$ denotes bounded cohomology (as we recall in Subsection~\ref{subsec=H_b}). Then, it was proved in \cite[Theorem~1.10]{KKMMM} that 
\[
\WW(\Gg,\Ng)\cong \mathrm{Im}(c^2_{\Gg})\cap \mathrm{Im}(\ppi^{\ast}\colon \HHH^2(\Gam)\to \HHH^2(\Gg))
\]
holds, where $c^2_{\Gg}\colon \HHH^2_b(\Gg)\to \HHH^2(\Gg)$ is the second comparison map, and $\ppi\colon \Gg\twoheadrightarrow \Gam$ is the group quotient map. In particular,  if $\HHH^2_b(\Gam)=\HHH^3_b(\Gam)=0$,   then we have
\[
\Rdim \WGN\leq \min \{\Rdim\HHH^2(\Gam),\Rdim\HHH^2(\Gg)\}.
\]

For instance, for the pair $(\Gg,\Ng)$ with $\Gg=\pi_1(\Sg)$ and $\Ng=[\Gg,\Gg]$ for $\genus$ at least two, it was seen in \cite[Theorem~1.1]{KKMMM} that
\begin{equation}\label{eq=Wsurface}
\WW(\Gg,\Ng)\cong \HHH^2(\Gg)\cong \HHH^2(\Sg;\RR)= \RR.
\end{equation}
Here, $\Sg$ is a connected orientable closed  surface of genus $\genus$.
\end{remark}

\subsection{Relation to the previous work}\label{subsec=previous}
We review previous work on the bi-Lipschitz comparison problem. 

\begin{enumerate}[label=(\arabic*)]
  \item In \cite{KKMMM}, the authors proved the following.
\begin{theorem}[{\cite[Theorem~2.1]{KKMMM}}]\label{prop=(ii)implies(i)}
Assume that $\WW(\Gg,\Ng)=0$. Let $\Gam=\Gg/\Ng$. Then, the following hold.
\begin{enumerate}[label=\textup{(\arabic*)}]
  \item $\scl_{\Gg}$ and $\scl_{\Gg,\Ng}$ are bi-Lipschitz equivalent on $[\Gg,\Ng]$.
  \item If $\Gam$ is solvable, then $\scl_{\Gg}\equiv \scl_{\Gg,\Ng}$ on $[\Gg,\Ng]$.
  \item If $\Gam$ is amenable, then for every $\yl\in [\Gg,\Ng]$ we have
\[
\scl_{\Gg,\Ng}(\yl)\leq 2\scl_{\Gg}(\yl).
\]
\end{enumerate}
\end{theorem}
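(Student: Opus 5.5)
The plan is to deduce all three assertions from the mixed Bavard duality \eqref{eq=mixedBavard} and its classical counterpart for $\scl_{\Gg}$. What we need is an \emph{extension with controlled defect}: for every $\muf\in\QNG$ we want $\phf\in\QQQ(\Gg)$ with $\phf|_{\Ng}-\muf\in\HHH^1(\Ng)^{\Gg}$ and $\DD(\phf)\leq C\,\DD(\muf)$.

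First, the reduction. Every $\nuf\in\HHH^1(\Ng)^{\Gg}$ vanishes on $\CGN$, since $\nuf(\gl\xl\gl^{-1}\xl^{-1})=\nuf(\gl\xl\gl^{-1})-\nuf(\xl)=0$ by $\Gg$-invariance. So if $\phf|_{\Ng}=\muf+\nuf$, then $\muf(\yl)=\phf(\yl)$ for all $\yl\in\CGN$. Given such a controlled extension, we get
\[
\frac{|\muf(\yl)|}{2\DD(\muf)}\leq C\cdot\frac{|\phf(\yl)|}{2\DD(\phf)}\leq C\,\sclG(\yl),
\]
where the last step is the ordinary Bavard duality. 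Taking the supremum over $\muf$ gives $\sclGN\leq C\,\sclG$ on $\CGN$. Assertion (1) therefore needs some finite $C$, assertion (3) needs $C=2$, and assertion (2) needs $C=1$.

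For (1), I would use the open mapping theorem. The defect is a norm on $\QQQ(\Gg)/\HHH^1(\Gg)$, and this space is complete. For homogeneous quasimorphisms, $\DD(\phf)=0$ forces $\phf$ to be a homomorphism. Completeness follows from the standard fact that $\|\phf\|_\infty$ on bounded sets is controlled by the defect after normalizing the homomorphism part, combined with homogenization of limits. The same argument shows that $\QNG/\HHH^1(\Ng)^{\Gg}$, normed by $\DD$, is a Banach space, because $\Gg$-invariance is a closed condition. Restriction induces a linear map
\[
\QQQ(\Gg)/\HHH^1(\Gg)\to \QNG/\HHH^1(\Ng)^{\Gg},
\]
which is bounded with norm at most $1$ because $\DD(\phf|_{\Ng})\leq\DD(\phf)$. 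The hypothesis $\WGN=0$ says exactly that this map is surjective. The open mapping theorem then gives a constant $C$ such that every class $[\muf]$ has a preimage $[\phf]$ with $\DD(\phf)\leq C\,\DD(\muf)$, modulo the usual $\varepsilon$ slack. This yields (1). The main technical care in this step is checking completeness, in particular that the quotient by $\HHH^1$ interacts correctly with the defect seminorm.

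For (3) and (2), the sharp constants require an explicit extension rather than an abstract one, and I expect this to be the main obstacle. For amenable $\Gam$, I would start from an arbitrary extension $\phf_0$ supplied by $\WGN=0$ and improve it by averaging. I would fix a set-theoretic section $s\colon\Gam\to\Gg$ and an invariant mean $m$ on $\Gam$. Then I would set
\[
\phf_1(\gl)=m_{\gamma}\bigl(\muf'(s(\gamma)\gl^{k}s(\gamma)^{-1})\bigr)/k \quad\text{along powers landing in } \Ng,
\]
where $\muf'=\phf_0|_{\Ng}$, using that some power of $\gl$ need not lie in $\Ng$, so the recipe would need modification there. Finally I would homogenize $\phf_1$. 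The goal is to bound the defect of the averaged, homogenized function by $2\DD(\muf)$: one factor from $\DD(\muf)$ itself, and one from the homogenization and the cocycle twisting by the section. If this direct averaging stalls, I would instead phrase it cohomologically: the obstruction to extending with small defect lies in $\HHH^2_b(\Gam)$, which vanishes for amenable $\Gam$ with a norm-controlled primitive. For solvable $\Gam$, I would induct on the derived length. In the abelian step, $\Gg$-invariance together with the commutativity of $\Gam$ should allow an extension with $\DD(\phf)=\DD(\muf)$ exactly. Iterating along the derived series would give $C=1$, hence $\sclG\equiv\sclGN$ on $\CGN$.
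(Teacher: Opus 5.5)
\paragraph{Part (1): correct, by a different route.} Your proof of (1) is correct, and it works on the dual side of the paper's argument. You apply the open mapping theorem directly to the surjection $\widehat{i^{\ast}}\colon \QQQ(\Gg)/\HHH^1(\Gg)\to\QNG/\HHH^1(\Ng)^{\Gg}$ and then use ordinary Bavard duality for each $\muf$. The paper instead completes the chain spaces, uses the closed range theorem to show that $\overline{\iota}$ is injective with closed range, and applies the open mapping theorem to $\overline{\iota}$. Your route is shorter for this direction; the paper works on the chain side because that is what the converse direction requires.

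One caveat: your sketch of completeness does not work as stated for $\QNG/\HHH^1(\Ng)^{\Gg}$. Normalizing by arbitrary elements of $\HHH^1(\Ng)$ takes you out of $\QNG$, and the observation that ``invariance is a closed condition'' does not put the limit back in. Simply cite Corollary~\ref{cor=QNBanach}, where completeness follows from realizing the space as a continuous dual.

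\paragraph{Parts (2) and (3): genuine gap.} The paper itself only reproves (1); parts (2) and (3) are quoted from earlier work. Your averaging formula is vacuous. Since $\muf'=\phf_0|_{\Ng}$ is already $\Gg$-invariant and homogeneous, $m_{\gamma}\bigl(\muf'(s(\gamma)\gl^k s(\gamma)^{-1})\bigr)/k=\phf_0(\gl)$ wherever it is defined.

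More fundamentally, there is nothing to improve. For amenable $\Gam$ we have $\HHH^2_b(\Gam)=0$, so Theorem~\ref{theorem_Monod_sequence} and Proposition~\ref{prop_Q_exact} show that $\widehat{i^{\ast}}$ is injective. Hence the extension class $[\phf_0]\in\QQQ(\Gg)/\HHH^1(\Gg)$ is unique, and so is its defect. For the same reason, vanishing of $\HHH^2_b(\Gam)$ gives only injectivity, never a specific constant.

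The missing ingredient is a defect comparison valid for every $\phf\in\QQQ(\Gg)$:
\begin{itemize}
\item $\DD_{\Gg}(\phf)\le 2\DD_{\Ng}(\phf|_{\Ng})$ when $\Gam$ is amenable;
\item $\DD_{\Gg}(\phf)=\DD_{\Ng}(\phf|_{\Ng})$ when $\Gam$ is solvable.
\end{itemize}
Since $\DD_{\Ng}(\phf_0|_{\Ng})=\DD_{\Ng}(\muf)$, your reduction then gives $C=2$ and $C=1$ respectively.

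\paragraph{How to supply the missing comparisons.} In the amenable case, the invariant mean on $\Gam$ must be used to average bounded cochains, not quasimorphisms. Doing so shows that restriction $\HHH^2_b(\Gg)\to\HHH^2_b(\Ng)^{\Gg}$ is isometric. Lemma~\ref{lem=eqnorms} then gives
\[
\DD_{\Gg}(\phf)\le 2\|[\delta\phf]\|=2\|[\delta(\phf|_{\Ng})]\|\le 2\DD_{\Ng}(\phf|_{\Ng}),
\]
and this is exactly where the factor $2$ comes from.

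In the solvable case, first suppose $[\Gg,\Gg]\subseteq\Ng$. Use Bavard's lemma $\DD_{\Gg}(\phf)=\sup_{\gl,\hl}|\phf([\gl,\hl])|$ together with the identity $[\gl,\hl^k]=\prod_{j=0}^{k-1}\hl^j[\gl,\hl]\hl^{-j}$, whose factors all lie in $\Ng$. By conjugation invariance of $\phf$, this gives $k|\phf([\gl,\hl])|\le\DD_{\Gg}(\phf)+(k-1)\DD_{\Ng}(\phf|_{\Ng})$. Letting $k\to\infty$ yields $\DD_{\Gg}(\phf)\le\DD_{\Ng}(\phf|_{\Ng})$.

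For general solvable $\Gam$, iterate this along the preimages in $\Gg$ of the derived series of $\Gam$. Each consecutive pair has abelian quotient, and each restriction of $\phf$ is a homogeneous, hence conjugation-invariant, quasimorphism on the larger group.
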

 Theorem~\ref{prop=(ii)implies(i)}~(1) verifies the direction of ``(ii) implies (i)'' in Theorem~\ref{mthm=main_biLip}.
 \item \relax As is discussed in Subsection~\ref{subsec=Intro}, in the proof of (the novel part of) Theorem~\ref{mthm=main_biLip} we are required to construct a witness of ``not (i)'' under the assumption of ``not (ii)'' ($\WGN\ne 0$). The first construction of this sort was given by \cite{KK} in the case where $\Gg=\mathrm{Symp}_0(\Sg,\omega)$ and $\Ng=\mathrm{Ham}(\Sg,\omega)$ for $\genus$ at least two; see \cite{KK} for details. Later, in \cite[Example~7.15]{KKMMM}, we obtained a refined example, with smaller $\Gg$ than the one above. For both of the consructions in \cite{KK} and \cite{KKMMM}, Py's Calabi quasimorphism, defined in \cite{Py06}, was employed  as a representative of a non-zero element in $\WGN$.
\item \relax Another construction of a witness of ``not (ii)'' was given in \cite{MMM} for $\Gg=\pi_1(\Sg)$ and $\Ng=[\Gg,\Gg]$, where $\genus$ is at least two. In this example, $\Rdim \WW(\Gg,\Ng)=1$ by \eqref{eq=Wsurface}. In this work \cite{MMM}, they obtained a representative of a non-zero element in $\WGN$ from a circle action of $\Gg$, and employed it.
\end{enumerate}

\subsection{Strategy of the proof of the main result}\label{subsec=strategy}
\relax To summarize our discussion in Subsection~\ref{subsec=Intro} on Theorem~\ref{mthm=main_biLip}, given $(\Gg,\Ng)$ with $\WGN\ne 0$, we are required to construct a witness of ``not (i),'' that is, a sequence $(\yl_n)_{n\in \NN}$ in $\CGN$ satisfying
\begin{equation}\label{eq=ratio_infty}
\lim_{n\to \infty}\frac{\scl_{\Gg,\Ng}(\yl_n)}{\scl_{\Gg}(\yl_n)}\to \infty.
\end{equation}
The mixed Bavard duality theorem \eqref{eq=mixedBavard}, which was proved in \cite{KKMM1} in the full generality, relates $\scl_{\Gg,\Ng}$ to the normed space $(\QQQ(\Ng)^{\Gg}/\HHH^1(\Ng)^{\Gg},D)$. Here, the defect $\DD$ on $\QQQ(\Ng)$ gives rise to a genuine norm on the quotient $\RR$-linear space $\QQQ(\Ng)^{\Gg}/\HHH^1(\Ng)^{\Gg}$. The original case for $\Ng=\Gg$ is known as the celebrated Bavard duality theorem \cite{Bavard}.  For the case of $\Ng=\Gg$, in  \cite{Calegari} and \cite{Calegari09} Calegari extended the framework of the Bavard duality theorem from the commutator subgroup $[\Gg,\Gg]$ to a certain subspace of real $1$-chains of $\Gg$ by establishing the generalized Bavard duality theorem. In \cite{KKMMM_KIAS}, we obtained the mixed version, the generalized mixed Bavard duality theorem. In this manner, despite the fact that $\CGN$ is not equipped with a structure of an $\RR$-linear space in general, we switch our setting from $\CGN$ to the space of $1$-chains above; this is an $\RR$-linear space, as well as  the space $\QQQ(\Ng)^{\Gg}/\HHH^1(\Ng)^{\Gg}$. This is crucial for us to formulate the  refined version of the generalized mixed  Bavard duality theorem, which we will discuss below.

Our strategy is \emph{functional analytic}, as we will describe in this paragraph. In the first step of the proof of Theorem~\ref{mthm=main_biLip}, we refine the generalized mixed  Bavard duality theorem to fit into the framework of functional analysis, which we state as Theorem~\ref{thm=refinedgmBavard}. More precisely, the refinement should be of the following form: for a certain real normed space coming from $1$-chains of $\Ng$, the continuous dual of this space is isometrically isomorphic to $(\QQQ(\Ng)^{\Gg}/\HHH^1(\Ng)^{\Gg},D)$ (such a refinement will make the meaning of the \emph{duality} theorem even clearer). In the second step, we perform functional analysis with the aid of this refined version. Here, the key idea is to \emph{take the completion} of the real normed space. Functional analysis works effectively on this completion. However, here we emphasize that the Banach space, resulting in this completion procedure, is quite mysterious and \emph{care is needed to handle this space} in general; see Remark~\ref{rem=ghost} and Example~\ref{exa=iotakernel}.

Our final step is to get back from this mysterious Banach space to the mixed commutator subgroup $\CGN$ \relax by approximation; compare with Lemma~\ref{lem=CGNchain} and Remark~\ref{rem=coarsehom}. \relax At the moment, a scaling-up process is involved in our approximation (as we see in Lemma~\ref{lem=approximation_scaling}), but we are  still able to obtain a sequence $(\yl_n)_{n\in \NN}$ with \eqref{eq=ratio_infty}. This ends the sketch of our proof of Theorem~\ref{mthm=main_biLip}.

\subsection{Further direction: coarse kernels}\label{subsec=further}
The strategy of the proof in Subsection~\ref{subsec=strategy} indicates that $\scl_{\Gg}$ and $\scl_{\Gg,\Ng}$ are quite different on $[\Gg,\Ng]$ as long as $\WGN\ne 0$. However, from the viewpoint of large-scale geometry, the most extreme situation is that there exists a subset $A$ of $[\Gg,\Ng]$ such that
\begin{equation}\label{eq=bddunbdd}
\textrm{$A$ is \emph{bounded} in $\scl_{\Gg}$,}\quad \textrm{but}\quad \textrm{$A$ is \emph{unbounded} in $\scl_{\Gg,\Ng}$}.
\end{equation}
Indeed, if such $A$ exists, then we can extract a sequence $(\yl_n)_{n\in \NN}$ in $A$ such that  $(\scl_{\Gg,\Ng}(\yl_n))_n$ diverges whereas $(\scl_{\Gg}(\yl_n))_n$ stays bounded, thus supplying the strongest form of \eqref{eq=ratio_infty}. The `largest' subset $A\subseteq [\Gg,\Ng]$ with \eqref{eq=bddunbdd}, in the sense of coarse geometry, may be expressed as the (non-trivial) \emph{coarse kernel} in the category of \emph{coarse groups} \cite{LV}. We will briefly describe the theory of coarse groups in the setting of the present paper in Subsection~\ref{subsec=coarse}; the definition of coarse kernel in this setting is given as  Definition~\ref{defn=coarsekernel} there. 

Theorem~\ref{mthm=main_biLip} only treats non-bi-Lipschitz-equivalence, and it does not reach the study of coarse kernels. Nevertheless, we have our  second main theorem, Theorem~\ref{mthm=main_Qcoarsekernel}, in the setting of rational chains. We will discuss this topic in Section~\ref{section=coarsekernel}. We additionally remark that a coarse kernel has a close relation to the extendability of invariant quasimorphisms up to invariant homomorphisms, as we discuss in Theorem~\ref{thm=cCKextend}. As we hinted at the beginning of this introduction, in the case where $\Gam=\Gg/\Ng$ is nilpotent (in particular abelian), we are able to determine the coarse kernel at the level of group elements: we will discuss this in our Part II paper \cite{KKMMM_partII}.

\subsection*{Organization of the present paper}
In Section~2, we exhibit Theorem~\ref{mthm=main_Qcoarsekernel}, which treats a coarse kernel in the setting of rational chains. Section~\ref{sec=prelim} is for preliminaries. Section~\ref{sec=refinedBavard} is devoted to the refined version of the generalized mixed Bavard duality theorem (Theorem~\ref{thm=refinedgmBavard}). In Section~\ref{sec=proofmain}, we prove Theorem~\ref{mthm=main_biLip} and Theorem~\ref{mthm=main_Qcoarsekernel}. We also show Theorem~\ref{thm=cCKextend} there. In Section~\ref{sec=applications}, we discuss applications. In Section~\ref{sec=closedrange}, we obtain a criterion for $\WGN$ being a normed space (Theorem~\ref{thm=closedrange}).

\subsection*{Notation and conventions}
Let $\NN=\{n\in \ZZ\;|\;n>0\}$. We write $\DD_{\Hg}(\psf)$, rather than $\DD(\psf)$, for the defect of a quasimorphism $\psf$ on a group $\Hg$ if we indicate the group $\Hg$. For ordinary  and  bounded cohomology of groups, the coefficient group is assumed to be $\RR$ if it is omitted. For instance, $\HHH^n(\Gg)$ means $\HHH^n(\Gg;\RR).$ For a quotient set $Y=X/\sim$, we write $[x]$ for the equivalence class represented by $x\in X$, as an element of $Y$. In the statements of the Bavard duality theorem and its strengthenings (such as \eqref{eq=mixedBavard}; see Subsection~\ref{subsec=Bavard} for more details), we define the supremum on the right-hand side  to be zero if this is over the empty set. For a real $1$-chain $c$ of a group $\Gg$ and a function $f\colon \Gg \to \RR$, we define the value $f(c)$ by linear extension. 
For $a,b\in \RR$ and $C\in \RR_{\geq 0}$, we write $a\sim_C b$ if $|a-b|\leq C$. For an $\RR$-linear space $V$, $\Rdim V$ means the real dimension of $V$: if $V$ is infinite-dimensional, then we set $\Rdim V=\infty$. For a metric space $(X, d_X)$, $x\in X$ and $R\in \RR_{\geq 0}$, we write $B^{(X,d_X)}_R(x)$ for the closed ball of radius $R$ centered at $x$.

\section{Main Result on coarse kernels for rational chains}\label{section=coarsekernel}
\relax As an extended introduction, in this section we present our second main theorem, Theorem~\ref{mthm=main_Qcoarsekernel}. This theorem treats the coarse kernel of a map between certain spaces of rational $1$-chains equipped with (mixed) scl seminorms. Hence, before stating Theorem~\ref{mthm=main_Qcoarsekernel}, we briefly describe our motivation for studying large-scale behavior of mixed scl, as well as  recalling the definition of mixed scl for certain rational $1$-chains. 
Throughout this section, let $\Gg$ be a group and $\Ng$ its normal subgroup. Let $i\colon \Ng\hookrightarrow \Gg$ be the inclusion map. 

\subsection{Motivation for the study of coarse groups}\label{subsec=coarsekernel}
First, we describe our motivation to study the large-scale geometry of mixed scl. 
The point here is that $\scl_{\Gg,\Ng}$ is \emph{not} a length function, but the following weak form of the triangle inequality holds: for all $\yl_1,\yl_2\in [\Gg,\Ng]$, 
\[
\scl_{\Gg,\Ng}(\yl_1\yl_2)\leq \scl_{\Gg,\Ng}(\yl_1)+\scl_{\Gg,\Ng}(\yl_2)+\frac{1}{2}.
\]
(To see this, apply the mixed Bavard duality theorem \eqref{eq=mixedBavard}, for instance.) We extend the domain of $\scl_{\Gg,\Ng}$ and define
\begin{align*}
&\scl_{\Gg,\Ng}\colon \Gg\to \RR_{\geq 0}\cup \{\infty\};\\
&\gl\mapsto 
\begin{cases} \dfrac{\scl_{\Gg,\Ng}(\gl^k)}{k}, & \text{if $\gl\in \Ng$ and there exists $k\in \NN$ with $\gl^k\in [\Gg,\Ng]$},\\
\infty, &\mathrm{otherwise}.
\end{cases}
\end{align*}
We note that this formulation is consistent with the definition of $\scl_{\Gg,\Ng}$ on $\QQ$-chains; compare with Definition~\ref{defn=sclQchain} in Subsection~\ref{subsec=Qchains}. Now define $d_{\scl_{\Gg,\Ng}}\colon \Gg\times \Gg\to \RR_{\geq 0}\cup \{\infty\}$ and $d_{\scl_{\Gg,\Ng}}^+\colon \Gg\times \Gg\to \RR_{\geq 0}\cup \{\infty\}$ by
\[
d_{\scl_{\Gg,\Ng}}(\gl_1,\gl_2)=\scl_{\Gg,\Ng}(\gl_1^{-1}\gl_2)
\]
and
\begin{equation}\label{eq=d+scl}
d^+_{\scl_{\Gg,\Ng}}(\gl_1,\gl_2)=\begin{cases}
\scl_{\Gg,\Ng}(\gl_1^{-1}\gl_2)+\dfrac{1}{2}, &\mathrm{if\ }\gl_1\ne \gl_2,\\
0, &\mathrm{if\ }\gl_1= \gl_2,
\end{cases}
\end{equation}
for $\gl_1,\gl_2\in \Gg$. 
 We set $d_{\scl_{\Gg}}=  d_{\scl_{\Gg,\Gg}}$ and $d^+_{\scl_{\Gg}}=  d^+_{\scl_{\Gg,\Gg}}$. 
Then, whereas $d_{\scl_{\Gg,\Ng}}$ may fail the triangle inequality, $d^+_{\scl_{\Gg,\Ng}}$ gives rise to  a genuine metric on $\Gg$, possibly taking the value $\infty$. The small-scale behaviors of  $d_{\scl_{\Gg,\Ng}}$ and $d^+_{\scl_{\Gg,\Ng}}$ may be completely different; for instance, the topology given by the metric $d^+_{\scl_{\Gg,\Ng}}$ is always discrete. On the other hand, the large-scale behaviors of $d_{\scl_{\Gg,\Ng}}$ and $d^+_{\scl_{\Gg,\Ng}}$ are exactly the same. This is our background to study the large-scale geometry of (mixed) scl. The group $\Gg$ admits two metrics (possibly taking the value $\infty$) $d^+_{\scl_{\Gg}}$ and $d^+_{\scl_{\Gg,\Ng}}$, and both of them are  bi-invariant. 
Here, a map $d \colon \Gg \times \Gg \to \RR_{\geq 0}\cup \{\infty\}$ (such as a metric) is said to be \emph{bi-invariant} if for all $g ,g_1,g_2, g' \in \Gg$, $d(gg_1g', gg_2 g')=d(g_1,g_2)$ holds. 
Thus, comparison problems between $\scl_{\Gg}$ and $\scl_{\Gg,\Ng}$ in  the  large-scale setting might be of interest; we refer the reader to \cite{BIP} for the study of bi-invariant metrics on groups of geometric origin. We will go back to this point in Remark~\ref{rem=coarsehom}.

\subsection{Mixed scl on rational chains}\label{subsec=Qchains}
Here we briefly describe the setting of the generalized mixed Bavard duality theorem: the definition of the subspace of rational $1$-chains of $\Ng$ appearing in the theorem and of mixed scl on this rational chain space. Let $R$ be a unital commutative ring. For $k\in \NN$, let $\CCC_k(\Gg;R)$ be the group of $k$-chains of $\Gg$ with $R$-coefficients. Define $\CCC_2'(\Gg,\Ng;R)$ to be the $R$-submodule of $\CCC_2(\Gg;R)$ generated by the set
\[
\{(\gl_1,\gl_2)\in \CCC_2(\Gg;R)\;|\; \mathrm{either\ }\gl_1\mathrm{\ or\ }\gl_2\mathrm{\ belongs\ to\ }\Ng\}.
\]
Recall that the boundary map $\partial\colon \CCC_2(\Gg;R)\to \CCC_1(\Gg;R)$ is given by
\[
\partial ((\gl_1,\gl_2))=\gl_2-\gl_1\gl_2+\gl_1.
\]
\begin{definition}\label{defn=chainspace}
In the setting above, we define the following space of chains.
\begin{enumerate}[label=(\arabic*)]
  \item Define $\BBB'_1(\Gg,\Ng;R)$ to be $\partial \CCC'_2(\Gg,\Ng;R)$.
  \item Define $\mathcal{C}_R(\Gg, \Ng)$ by
\[
\mathcal{C}_R(\Gg,\Ng)=\CCC_1(\Ng;R)\cap \BBB'_1(\Gg,\Ng;R).
\]
\end{enumerate}
We also set $\BBB_1(\Gg;R)=\BBB'_1(\Gg,\Gg;R)$ and $\mathcal{C}_R(\Gg)=\mathcal{C}_R(\Gg,\Gg) (=\CCC_1(\Gg;R)\cap \BBB_1(\Gg;R))$.
\end{definition}

In the present work, we will focus on the case where $R=\RR$ to perform functional analysis on this subspace of $\RR$-chains. Nevertheless, here we only define the mixed scl on the rational part, $\CQ(\Gg,\Ng)$, because  its  definition on $\QQ$-chains is much easier to explain. We also note that
\[
[\Gg,\Ng]\subseteq \CZ(\Gg,\Ng)\subseteq \CQ(\Gg,\Ng)\subseteq \CR(\Gg,\Ng),
\]
where we regard an element in $\Ng$ as a $1$-chain; the first inclusion follows from Lemma~\ref{lem=CGNchain}. For $c\in \CQ(\Gg,\Ng)$, there exists $k\in \NN$ such that $kc\in \CZ(\Gg,\Ng)$. An element $c$ in $\CZ(\Gg,\Ng)\subseteq \CCC_1(\Ng;\ZZ)$ is expressed as
\begin{equation}\label{eq=Zchain}
c=\xl_1+\cdots +\xl_m- \check{\xl}_1-\cdots -\check{\xl}_n,
\end{equation}
where $m,n\in \ZZ_{\geq 0}$ and $\xl_1,\ldots,\xl_m,\check{\xl}_1,\ldots,\check{\xl}_n\in \Ng$. The following definitions are known to be well-defined, and $\scl_{\Gg,\Ng}$ gives rise to a seminorm on the $\QQ$-linear space $\mathcal{C}_{\QQ}(\Gg,\Ng)$; we refer the reader to \cite{KKMMM_KIAS} for these proofs.

\begin{definition}[mixed scl for $\QQ$-chains]\label{defn=sclQchain}
\begin{enumerate}[label=(\arabic*)]
  \item Let $c\in \CZ(\Gg,\Ng)$ of the form \eqref{eq=Zchain}. We define the \emph{mixed commutator length} $\cl_{\Gg,\Ng}(c)$ by
\[
\cl_{\Gg,\Ng}(c)=\inf \cl_{\Gg,\Ng}(\xi_1\cdots \xi_m\eta_1^{-1}\cdots \eta_n^{-1}).
\]
Here, $\xi_1,\cdots,\xi_m$ run over the conjugacy classes of $\xl_1,\ldots,\xl_m$ in $\Gg$ and $\eta_1,\cdots,\eta_n$ run over the conjugacy classes of $\check{\xl}_1,\ldots,\check{\xl}_n$ in $\Gg$, respectively.
  \item Let $c\in \CZ(\Gg,\Ng)$ of the form \eqref{eq=Zchain}. We define the \emph{stable mixed commutator length} $\scl_{\Gg,\Ng}(c)$ by
\[
\scl_{\Gg,\Ng}(c)=\lim_{k\to\infty}\frac{\cl_{\Gg,\Ng}(\xl_1^k+\cdots +\xl_m^k- \check{\xl}_1^k-\cdots -\check{\xl}_n^k)}{k}.
\]
  \item Let $c\in \CQ(\Gg,\Ng)$. We define the \emph{stable mixed commutator length} $\scl_{\Gg,\Ng}(c)$ by
\[
\scl_{\Gg,\Ng}(c)=\frac{\scl_{\Gg,\Ng}(kc)}{k},
\]
where we take $k\in \NN$ such that $kc\in \CZ(\Gg,\Ng)$. 
We define $\scl_{\Gg}$ to be $\scl_{\Gg,\Gg}$ on $\mathcal{C}_{\QQ}(\Gg,\Gg)$.
\end{enumerate}
\end{definition}

The inclusion map $i\colon \Ng\hookrightarrow \Gg$ induces a set-theoretic inclusion $\CCC_1(\Ng;R)\hookrightarrow \CCC_1(\Gg;R)$. This map  further induces a map $\iota_{\QQ}\colon \CQ(\Gg,\Ng)\to \CQ(\Gg)$; $c\mapsto c$, which is a homomorphism between additive groups such that for every $c\in \CQ(\Gg,\Ng)$, 
\[
\scl_{\Gg}(\iota_{\QQ}(c))(=\scl_{\Gg}(c))\leq \scl_{\Gg,\Ng}(c).
\]
Hence, this map $\iota_{\QQ}$ can be regarded as a coarse homomorphism 
\[
 \iota_{\QQ}\colon (\CQ(\Gg,\Ng), \sclGN)\to (\CQ(\Gg), \sclG),
\]
and we may discuss the coarse kernel of it; we will discuss more details in Subsection~\ref{subsec=coarse}.

\subsection{Main result for rational chains}\label{subsec=resultsCK}
Here we present  our second main result (Theorem~\ref{mthm=main_Qcoarsekernel}), which is related to the coarse kernel (Definition~\ref{defn=coarsekernel}) in the setting of $\QQ$-chains.
As is discussed in Subsection~\ref{subsec=W}, $i\colon \Ng\hookrightarrow \Gg$ induces an $\RR$-linear map
\[
i^{\ast}\colon \QQQ(\Gg)\to \QQQ(\Ng)^{\Gg};\quad \phf\mapsto i^{\ast}\phf(=\phf\circ i=\phf|_{\Ng}).
\]
This map $i^{\ast}$ induces an $\RR$-linear map
\[
\widehat{i^{\ast}}\colon \QQQ(\Gg)/\HHH^1(\Gg)\to \QQQ(\Ng)^{\Gg}/\HHH^1(\Ng)^{\Gg}.
\]
\relax The quotient $\RR$-linear space $\WGN$ can be regarded as the cokernel of this map:
\[
\WGN=\mathrm{Coker}\big(\widehat{i^{\ast}}\colon \QQQ(\Gg)/\HHH^1(\Gg)\to \QQQ(\Ng)^{\Gg}/\HHH^1(\Ng)^{\Gg}\big).
\]
Recall from the introduction that the two $\RR$-linear spaces $\QQQ(\Gg)/\HHH^1(\Gg)$ and $\QQQ(\Ng)^{\Gg}/\HHH^1(\Ng)^{\Gg}$ are equipped with (genuine) defect norms, respectively. We write $\DD_{\Gg}$ for the defect norm on $\QQQ(\Gg)/\HHH^1(\Gg)$ and $\DD_{\Ng}$ for that on $\QQQ(\Ng)^{\Gg}/\HHH^1(\Ng)^{\Gg}$ to distinguish them.  Analytically, $\widehat{i^{\ast}}$ is a bounded linear operator from  the real normed space  $(\QQQ(\Gg)/\HHH^1(\Gg),\DD_{\Gg})$ to $(\QQQ(\Ng)^{\Gg}/\HHH^1(\Ng)^{\Gg},\DD_{\Ng})$. Indeed, for every $\phf\in \QNG$, we have $\DD_{\Ng}(\widehat{i^{\ast}}[\phf])\leq \DD_{\Gg}([\phf])$; in other words, the operator norm of $\widehat{i^{\ast}}$ is at most $1$. We may regard $\WGN$ as a quotient  seminormed  space: $\WGN$ is a normed space if and only if $\widehat{i^{\ast}}(\QQQ(\Gg)/\HHH^1(\Gg))$ is norm-closed in $(\QQQ(\Ng)^{\Gg}/\HHH^1(\Ng)^{\Gg},\DD_{\Ng})$.

Theorem~\ref{mthm=main_Qcoarsekernel} below describes the relation between a coarse kernel of $\iota_{\QQ}$ and the space $\WGN$.
 Here, we note that a coarse kernel of a coarse homomorphism is naturally equipped with a coarse group structure, as we will discuss in Remark~\ref{rem=coarsekernel}.

\begin{mthm}[coarse kernel for rational chains]\label{mthm=main_Qcoarsekernel}
Let $\Gg$ be a group and $\Ng$ its normal subgroup. Let $i\colon \Ng\hookrightarrow \Gg$ be the inclusion map.  Define
\[
\iota_{\QQ}\colon (\CQ(\Gg,\Ng),\scl_{\Gg,\Ng})\to (\CQ(\Gg),\scl_{\Gg});\quad c\mapsto c.
\]
Then, the following hold.
\begin{enumerate}[label=\textup{(\arabic*)}]
  \item A coarse kernel of the map $\iota_{\QQ}$ exists if and only if $\widehat{i^{\ast}}(\QQQ(\Gg)/\HHH^1(\Gg))$ is norm-closed in $(\QQQ(\Ng)^{\Gg}/\HHH^1(\Ng)^{\Gg},\DD_{\Ng})$.
  \item Assume that $\widehat{i^{\ast}}(\QQQ(\Gg)/\HHH^1(\Gg))$ is norm-closed in $(\QQQ(\Ng)^{\Gg}/\HHH^1(\Ng)^{\Gg},\DD_{\Ng})$. Then, a coarse kernel of $\iota_{\QQ}$ is isomorphic, as a coarse group, to a real normed space $V$ such that the continuous dual $V^{\ast}$ is isometrically isomorphic to $\WGN$, equipped with the quotient norm  of $\DD_{\Ng}$. 
\end{enumerate}
In particular, if $\WGN$ is finite-dimensional, then a coarse kernel of $\iota_{\QQ}$ exists and it is isomorphic to the continuous dual $\WGN^{\ast}$ as a coarse group.
\end{mthm}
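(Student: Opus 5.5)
Our approach is to reduce the whole statement to linear functional analysis, via the refined generalized mixed Bavard duality theorem (Theorem~\ref{thm=refinedgmBavard}). That theorem identifies the continuous dual of the seminormed space $(\CR(\Gg,\Ng),\scl_{\Gg,\Ng})$, modulo null vectors, isometrically with $(\QQQ(\Ng)^{\Gg}/\HHH^1(\Ng)^{\Gg},\DD_{\Ng})$, up to the normalizing factor $2$. It does the same for $(\CR(\Gg),\scl_{\Gg})$ and $(\QQQ(\Gg)/\HHH^1(\Gg),\DD_{\Gg})$. Under these identifications, $\widehat{i^{\ast}}$ is the adjoint of the chain map $\iota$.

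The first step is to show that $\iota_{\QQ}$ is coarsely equivalent to a linear map. Since $\scl$ is a genuine seminorm on $\QQ$-chains and the $\QQ$-chains are dense in the $\RR$-chains for $\scl$, I will pass to the normed quotients $E_{\Ng}$ and $E_{\Gg}$ (by the null vectors) of the $\QQ$-chain spaces. The map $\iota_{\QQ}$ then becomes a bounded linear operator $T\colon E_{\Ng}\to E_{\Gg}$ of norm at most $1$, with $T^{\ast}=\widehat{i^{\ast}}$ (up to the factor $2$). Collapsing null vectors is a coarse isomorphism. Hence coarse kernels of $\iota_{\QQ}$ correspond to coarse kernels of $T$ between normed spaces, regarded as coarse groups.

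The second step, and the heart of the argument, is a general lemma about a bounded operator $T\colon E\to F$ of normed spaces. Its coarse kernel exists if and only if there is $C$ with
\[
d(x,\Ker T)\le C\|Tx\|+C \quad\text{for all } x\in E,
\]
and by homogeneity this is the same as $d(x,\Ker T)\le C\|Tx\|$. In that case the coarse kernel is $\Ker T$ itself. To prove this, note that a coarse kernel is, by Definition~\ref{defn=coarsekernel}, a subset $K$ that is coarsely the largest set with $T(K)$ bounded. Taking $K$ to be the union of the preimages $T^{-1}(B_R(0))$, scaling shows that $T^{-1}(B_1)$ lies in a bounded neighbourhood of $\Ker T$ exactly when the inequality holds. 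Conversely, if the inequality fails, then for any candidate $K$ the preimages $T^{-1}(B_R)$ are not uniformly close to $K$, because of the scaling $x\mapsto tx$.

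Next I will translate the inequality by duality. It says that the induced map $E/\Ker T\to F$ is bounded below, that is, it is an isomorphism onto its image. I then want to show this is equivalent to $T^{\ast}$ having norm-closed range. This is the closed range theorem, but $E$ and $F$ need not be complete. I expect this to be the main obstacle. My plan is to pass to the completions $\bar E$ and $\bar F$, which have the same duals, and apply Banach's closed range theorem to $\bar T$. The difficulty is that $\Ker \bar T$ may be strictly larger than the closure of $\Ker T$; this is the ``ghost'' phenomenon of Remark~\ref{rem=ghost}. To control it I would use the Hahn--Banach characterization: $d(x,\Ker T)=\sup\{|\phi(x)| : \phi\in (\Ker T)^{\perp},\ \|\phi\|\le 1\}$. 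Closedness of $\mathrm{Im}\,T^{\ast}$ makes $\mathrm{Im}\,T^{\ast}$ weak-$\ast$ closed, so it equals $(\Ker T)^{\perp}$. The open mapping theorem then gives each such $\phi$ in the form $T^{\ast}\psi$ with $\|\psi\|\le C$, which yields the inequality. For the converse I would dualize the inequality: $T^{\ast}$ maps onto $(\Ker T)^{\perp}$ with control on preimages, and $(\Ker T)^{\perp}$ is closed.

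Finally, assuming closed range, the coarse kernel is $V=\Ker T$, viewed as a coarse group. Hahn--Banach gives $V^{\ast}\cong E^{\ast}/V^{\perp}=E^{\ast}/\overline{\mathrm{Im}\,T^{\ast}}^{w^{\ast}}$. With closed range this equals $\mathrm{Coker}\,\widehat{i^{\ast}}=\WGN$ with the quotient norm, isometrically. For the last sentence, suppose $\WGN$ is finite-dimensional. Then the range of $\widehat{i^{\ast}}$ has finite codimension, and I will argue it is closed: it is the image of a Banach space ($\QQQ(\Gg)/\HHH^1(\Gg)$ is complete) with finite-codimensional image in a Banach space, so a standard open-mapping argument applies. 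In this case $V$ is finite-dimensional and $V\cong V^{\ast\ast}\cong\WGN^{\ast}$.
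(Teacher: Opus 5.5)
There is a genuine gap. Your reduction to a bounded operator $T\colon E\to F$ between the normed quotients of rational chains is fine, and so is your finite-dimensional step. The problem is your central lemma ``a coarse kernel exists iff $d(x,\Ker T)\le C\|Tx\|+C$, and then it is $\Ker T$.'' It is false when $E$ is not complete, and it fails exactly at the ghost phenomenon you flagged.

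The direction ``inequality fails $\Rightarrow$ no coarse kernel'' needs completeness. In Theorem~\ref{prop=CKBanach} the scaling argument works only because $a_n/2^n$ converges inside $A$. Here is a counterexample.
Let $P$ be the orthogonal projection of $\ell^2$ onto $e_1^{\perp}$. Let $f$ be a discontinuous linear functional on $\ell^2$ with $f(e_1)=1$. Set $E=\Ker f$, which is dense, and $T=P|_E$. Then:
\begin{itemize}
\item $\Ker T=0$, and $T^{\ast}=P$ has closed range;
\item $T^{-1}(B_1(0))$ is an unbounded coarse kernel of $T$, since $te_1$ can be approximated by elements of $E$;
\item $\|x\|\le C\|Tx\|+C$ fails for $x\in E$ near $ne_1$.
\end{itemize}

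Your duality step has the matching flaw. Closed range of $T^{\ast}$ gives $\mathrm{Im}\,T^{\ast}=(\Ker\bar T)^{\perp}$, which is weak$^{\ast}$-closed for the pairing with $\bar E$. By contrast, $(\Ker T)^{\perp}$ is the $\sigma(E^{\ast},E)$-closure of $\mathrm{Im}\,T^{\ast}$, and it is strictly larger whenever $\overline{\Ker T}\subsetneq\Ker\bar T$. In fact your inequality $d(x,\Ker T)\le C\|Tx\|$ is equivalent to ``$\bar T$ has closed range \emph{and} $\Ker T$ is dense in $\Ker\bar T$.'' In general this is strictly stronger than closedness of $\mathrm{Im}\,\widehat{i^{\ast}}$. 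Nothing in this setting supplies that density; this is exactly the issue raised in Remark~\ref{rem=ghost}. So your part (1) is unjustified. So is your identification in part (2) of $V=\Ker T$ with $V^{\ast}\cong E^{\ast}/(\Ker T)^{\perp}\cong\WGN$.

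The missing idea is not to avoid the completions but to use them. A dense subspace is asymptotic to its completion, so coarse kernels of $\iota_{\QQ}$ correspond to coarse kernels of $\overline{\iota}\colon\barChGN\to\barChG$. There Theorem~\ref{prop=CKBanach} applies, and the coarse kernel is $\Ker(\overline{\iota})$, ghost elements included. It is realized by any $A\subseteq\mathcal{C}^{\hhh}_{\QQ}(\Gg,\Ng)$ with $A\asymp\Ker(\overline{\iota})$; such an $A$ need not lie in the kernel of $\iota^{\hhh}_{\QQ}$.

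Banach's closed range theorem for $\overline{\iota}$, with $\overline{\iota}^{\dagger}=\widehat{i^{\ast}}$, then gives (1). It also gives $\mathrm{Im}\,\widehat{i^{\ast}}=\Ker(\overline{\iota})^{\perp}$, which with Lemma~\ref{lem=dualquotient}~(1) yields $\WGN\cong\Ker(\overline{\iota})^{\ast}$. So one takes $V=(\Ker(\overline{\iota}),2\barsclGN)$. This also removes a side issue: your $E$ is only a $\QQ$-vector space, so $\Ker T$ is not itself a real normed space.
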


As we have seen in Remark~\ref{rem=dimW}, a mild condition on $(\Gg,\Ng)$ ensures the finite-dimensionality of  $\WGN$; we refer the reader to \cite[Theorem 7.11]{KKMMM_KIAS} for a more general condition.
We also have the following result, which relates coarse kernels to the extendability, up to invariant homomorphisms, of invariant quasimorphisms.

\begin{theorem}[coarse kernel and extendability up to invariant homomorphisms]\label{thm=cCKextend}
Let $\Gg$ be a group and $\Ng$ its normal subgroup. Assume that a coarse kernel $A$ of $\iota_{\QQ}\colon (\mathcal{C}_{\QQ}(\Gg,\Ng),\sclGN)\to (\mathcal{C}_{\QQ}(\Gg),\sclG)$ exists. Then, for every $\muf\in \QNG$, the following are equivalent.
\begin{enumerate}[label=\textup{(\roman*)}]
  \item $[\muf]=0$ in $\WGN$. In other words, there exists $\phf\in \QQQ(\Gg)$ such that $\muf-\phf|_{\Ng}\in \HHH^1(\Ng)^{\Gg}$.
  \item $\muf(A)$ is a bounded subset of $\RR$.
\end{enumerate}
\end{theorem}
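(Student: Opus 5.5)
We prove the two directions separately. Throughout we use the generalized mixed Bavard duality on $\QQ$-chains: for $c\in\CQ(\Gg,\Ng)$,
\[
\scl_{\Gg,\Ng}(c)=\sup_{[\muf]\in(\QNG/\HHH^1(\Ng)^{\Gg})\setminus\{0\}}\frac{|\muf(c)|}{2\DD(\muf)},
\]
together with its ordinary counterpart for $\scl_{\Gg}$ on $\CQ(\Gg)$. We also use the elementary fact that every $\psf\in\HHH^1(\Ng)^{\Gg}$ vanishes on $\CQ(\Gg,\Ng)$. Indeed, $c\in\BBB'_1(\Gg,\Ng;\QQ)$ is a combination of boundaries $\gl_2-\gl_1\gl_2+\gl_1$ with one entry in $\Ng$, and an invariant homomorphism kills these; this is the standard argument behind well-definedness of the duality. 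Since the coarse kernel is unique up to coarse equivalence, and boundedness of $\muf(A)$ is preserved under coarse equivalence of subsets (because $|\muf(c)-\muf(c')|\le 2\DD(\muf)\scl_{\Gg,\Ng}(c-c')$ by duality), we may use the defining properties of $A$ from Definition~\ref{defn=coarsekernel}. These are: $A$ is bounded in $\iota_{\QQ}$, i.e.\ $\sup_{a\in A}\scl_{\Gg}(a)<\infty$; and every subset $B\subseteq\CQ(\Gg,\Ng)$ with $\iota_{\QQ}(B)$ bounded in $\scl_{\Gg}$ lies within finite $\scl_{\Gg,\Ng}$-distance of $A$.

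\emph{(i)$\Rightarrow$(ii).} Write $\muf=\phf|_{\Ng}+\psf$ with $\phf\in\QQQ(\Gg)$ and $\psf\in\HHH^1(\Ng)^{\Gg}$. For $a\in A$ we have $\psf(a)=0$, so $\muf(a)=\phf(a)$. By ordinary generalized Bavard duality, $|\phf(a)|\le 2\DD(\phf)\scl_{\Gg}(a)$. Since $\scl_{\Gg}$ is bounded on $A$, so is $\muf$.

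\emph{(ii)$\Rightarrow$(i).} This is the main step. Suppose $[\muf]\neq0$ in $\WGN$. By Theorem~\ref{mthm=main_Qcoarsekernel}(1), existence of $A$ implies that $\widehat{i^{\ast}}(\QQQ(\Gg)/\HHH^1(\Gg))$ is norm-closed. Hence $\WGN$ is a normed space, and $[\muf]$ has positive quotient norm $\delta=\inf_{\phf}\DD_{\Ng}(\muf-\phf|_{\Ng})>0$, where the infimum is taken modulo $\HHH^1(\Ng)^{\Gg}$. We then construct chains on which $\muf$ is unbounded but $\scl_{\Gg}$ is bounded. One route is Hahn--Banach via the refined duality (Theorem~\ref{thm=refinedgmBavard}): on the completion of $(\CR(\Gg,\Ng),\scl_{\Gg,\Ng})$, whose dual is $\QNG/\HHH^1(\Ng)^{\Gg}$, the annihilator of the image of $\widehat{i^{\ast}}$ is the closure-type subspace $\ker$ of the adjoint of $\iota$. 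Closedness of the range then gives, by the closed range argument, elements $c$ with $\muf(c)\ge1$ and $\scl_{\Gg}(c)\le\varepsilon$, or even with $\scl_{\Gg}(c)$ bounded and $\muf(c)$ arbitrarily large. We approximate these by rational chains via Lemma~\ref{lem=approximation_scaling} and density of $\QQ$-chains, which yields $c_n\in\CQ(\Gg,\Ng)$ with $\scl_{\Gg}(c_n)\le 1$ and $\muf(c_n)\ge n$. The set $B=\{c_n\}$ has bounded image, so it lies within distance $R$ of $A$: there are $a_n\in A$ with $\scl_{\Gg,\Ng}(c_n-a_n)\le R$. Then $|\muf(a_n)|\ge n-2\DD(\muf)R\to\infty$, contradicting (ii).

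The expected obstacle is the production of the chains $c_n$. Alternatively, we can simply quote the proof of Theorem~\ref{mthm=main_Qcoarsekernel}(2), which identifies the coarse kernel with a normed space $V$ whose dual is $\WGN$, with $\muf$ acting on $V$ through $[\muf]\in\WGN=V^{\ast}$. Under this identification, boundedness of $\muf$ on $A\simeq V$ forces the linear functional $[\muf]$ to vanish, so $[\muf]=0$. We would pursue this second route first, since it reduces (ii)$\Rightarrow$(i) to checking that the isomorphism $A\simeq V$ is compatible with evaluation of $\muf$ up to bounded error.
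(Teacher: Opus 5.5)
Your proposal is correct. The paper proves both directions at once:
\begin{itemize}
\item existence of $A$ gives norm-closedness of the range of $\widehat{i^{\ast}}$ (Theorem~\ref{mthm=main_Qcoarsekernel});
\item Theorem~\ref{thm=Banach} applied to $S=\overline{\iota}$, whose adjoint is $\widehat{i^{\ast}}$, then gives $\widehat{i^{\ast}}(\QQQ(\Gg)/\HHH^1(\Gg))=\Ker(\overline{\iota})^{\perp}$;
\item so (i) says that the linear functional $\muf$ vanishes, equivalently is bounded, on the subspace $\Ker(\overline{\iota})$;
\item this transfers to $A$ because $A\asymp\Ker(\overline{\iota})$ in $(\barChGN,\barsclGN)$ and $\muf$ is $2\DD_{\Ng}(\muf)$-Lipschitz there.
\end{itemize}
Your (i)$\Rightarrow$(ii) instead uses only that $\sclG$ is bounded on $A$, together with Bavard duality for $\Gg$. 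This is more elementary and needs neither closed range nor completions. Your (ii)$\Rightarrow$(i) uses the same closed-range input as the paper, but then invokes the maximality of $A$ directly rather than $A\asymp\Ker(\overline{\iota})$. Your ``second route'' is exactly the paper's argument.

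Three places need tightening.

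First, the step you leave vague should read as follows. The preannihilator of the range of $\widehat{i^{\ast}}=\overline{\iota}^{\dagger}$ is $\Ker(\overline{\iota})$, not a kernel of an adjoint. Closedness plus Theorem~\ref{thm=Banach} gives $\widehat{i^{\ast}}(\QQQ(\Gg)/\HHH^1(\Gg))=\Ker(\overline{\iota})^{\perp}$, so some $\alpha\in\Ker(\overline{\iota})$ has $\muf(\alpha)=1$. Density alone, without Lemma~\ref{lem=approximation_scaling}, then gives $c_n\in\CQ(\Gg,\Ng)$ with $\barsclGN(n\alpha-c_n)\le 1$. Hence $\sclG(c_n)\le 1$ and $\muf(c_n)\ge n-2\DD_{\Ng}(\muf)$.

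Second, your reason why $\psf\in\HHH^1(\Ng)^{\Gg}$ vanishes on $\CQ(\Gg,\Ng)$ does not literally make sense. The map $\psf$ is only defined on $\Ng$, while a boundary $\gl_2-\gl_1\gl_2+\gl_1$ with $\gl_1\in\Ng$ has terms outside $\Ng$. Use Lemma~\ref{lem=CGNchain} instead: for $c\in\CZ(\Gg,\Ng)$ one has $\psf(c)=\psf(\xl_1\cdots\xl_m\xbr_1^{-1}\cdots\xbr_n^{-1})=0$, because an invariant homomorphism kills $\CGN$.

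Third, the ``obstacle'' of producing the $c_n$ can be handled without the closed range theorem. Suppose $|\muf(c)|\le C\,\sclG(c)$ for all $c\in\CQ(\Gg,\Ng)$. By density, $\muf$ induces a well-defined functional of norm at most $C$ on $\iota(\ChGN)\subseteq(\ChG,\sclG)$. Hahn--Banach together with Theorem~\ref{thm=refinedgmBavard} for $(\Gg,\Gg)$ then produces $\phf\in\QQQ(\Gg)$ with $\widehat{i^{\ast}}[\phf]=[\muf]$. Contrapositively, $[\muf]\neq 0$ in $\WGN$ directly yields $c_n$ with $\sclG(c_n)\le 1$ and $|\muf(c_n)|\ge n$, after rescaling by positive rationals. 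Maximality of $A$ finishes the argument as you wrote.
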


As  an application  of our theory, we have a crushing theorem, which we will discuss in Subsection~\ref{subsec=crushing} as Theorem~\ref{thm=crushing} and Remark~\ref{rem=crushing}.

\subsection{A criterion for the norm-closedness of the range of $\widehat{i^{\ast}}$}\label{subsec=closedrange}
In relation to Theorem~\ref{mthm=main_Qcoarsekernel}, it is natural to ask when $\widehat{i^{\ast}}(\QQQ(\Gg)/\HHH^1(\Gg))$ is norm-closed in $(\QQQ(\Ng)^{\Gg}/\HHH^1(\Ng)^{\Gg},\DD_{\Ng})$ in the setting of Theorem~\ref{mthm=main_Qcoarsekernel}. One useful sufficient condition is that $\WW(\Gg,\Ng)=\mathrm{Coker}(\widehat{i^{\ast}})$ is finite-dimensional, as we will see in Proposition~\ref{prop=findim}. Furthermore, we obtain another criterion in terms of ordinary and bounded cohomology of groups (we will recall basic facts in bounded cohomology in Subsection~\ref{subsec=H_b}) as follows.

\begin{theorem}\label{thm=closedrange}
Let $\Gg$ be a group and $\Ng$ its normal subgroup. Let $i\colon \Ng\hookrightarrow \Gg$ be the inclusion map. Let $\Gam=\Gg/\Ng$. 
Assume that the following two conditions hold. 
\begin{enumerate}[label=\textup{(\alph*)}]
  \item $\HHH^3_b(\Gam)$ is finite-dimensional.
  \item Either $\HHH^2_b(\Gam)$ or $\HHH^2(\Gam)$ is finite-dimensional.
\end{enumerate}
Then, $\widehat{i^{\ast}}(\QQQ(\Gg)/\HHH^1(\Gg))$ is norm-closed in $(\QQQ(\Ng)^{\Gg}/\HHH^1(\Ng)^{\Gg},\DD_{\Ng})$. 
\end{theorem}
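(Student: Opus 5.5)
We aim to show that $\widehat{i^{\ast}}$ has closed range by exhibiting the range as the kernel of a bounded operator, or as the preimage of a finite-dimensional condition. The natural framework is the exact sequence relating quasimorphisms to bounded cohomology. Recall the standard identifications $\QQQ(\Gg)/\HHH^1(\Gg)\cong \Ker(c^2_{\Gg}\colon \HHH^2_b(\Gg)\to\HHH^2(\Gg))$ via $[\phf]\mapsto[\delta\phf]$. Similarly, $\QQQ(\Ng)^{\Gg}/\HHH^1(\Ng)^{\Gg}$ embeds into $\HHH^2_b(\Ng)^{\Gg}$. Both embeddings are bi-Lipschitz with respect to the defect norm and the Gromov seminorm; indeed $\|[\delta\phf]\|_\infty\le \DD(\phf)\le 2\|[\delta\phf]\|_\infty$ by homogeneity. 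Thus closedness of the range is a norm-topological statement that may be checked on the bounded cohomology side, up to equivalent norms.

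\textbf{Step 1.} Use the seven-term-type exact sequence in bounded cohomology for $1\to\Ng\to\Gg\to\Gam\to 1$, namely
\[
0\to \HHH^2_b(\Gam)\to \HHH^2_b(\Gg)\to \HHH^2_b(\Ng)^{\Gg}\to \HHH^3_b(\Gam)\to \HHH^3_b(\Gg),
\]
which is the known low-degree exact sequence of Bouarich / Monod--Shalom type, together with its analogue in ordinary cohomology (Hochschild--Serre): $0\to\HHH^1(\Gam)\to\HHH^1(\Gg)\to\HHH^1(\Ng)^{\Gg}\to\HHH^2(\Gam)\to\HHH^2(\Gg)$. This is the description underlying \cite[Theorem~1.10]{KKMMM}.

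\textbf{Step 2.} Decompose the cokernel. Take $\muf\in\QNG$ and consider its class $[\delta\muf]\in\HHH^2_b(\Ng)^{\Gg}$. By (a), the image of $\HHH^2_b(\Gg)$ in $\HHH^2_b(\Ng)^{\Gg}$ is the kernel of a map to the finite-dimensional space $\HHH^3_b(\Gam)$, and hence has finite codimension. Now suppose $\delta\muf$ comes from some $\beta\in\HHH^2_b(\Gg)$. Then $c^2_{\Gg}(\beta)$ restricts to zero on $\Ng$, so $c^2_{\Gg}(\beta)$ lies in the image of $\HHH^2(\Gam)$. This gives a second obstruction, modulo the image of $\HHH^2_b(\Gam)$. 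Hypothesis (b) makes this second obstruction space finite-dimensional, whichever of the two spaces is assumed finite-dimensional. Consequently the range $R=\widehat{i^{\ast}}(\QQQ(\Gg)/\HHH^1(\Gg))$ is obtained from $\QNG/\HHH^1(\Ng)^{\Gg}$ by imposing finitely many linear conditions, passing through two finite-codimension stages. Each stage must be shown to be given by a bounded map, or else handled by the following lemma: if $T$ is bounded, $\Ker T$ is closed, and a subspace $S\supseteq$ (something closed) has finite codimension in a closed subspace, then $S$ is closed. This is the analogue of Proposition~\ref{prop=findim}.

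\textbf{Step 3 (main obstacle).} The subtle point is topological rather than algebraic. The maps to $\HHH^3_b(\Gam)$ and the subsequent comparison into $\HHH^2(\Gam)$ need not be continuous for the defect norm, and $\HHH^2_b$ may be non-Hausdorff in degree 3. Moreover, finite codimension of $R$ inside a closed subspace does not by itself imply that $R$ is closed. To handle this, I would first show that $R_1=$ (image of the extendable-to-$\HHH^2_b(\Gg)$ classes) is closed. This follows because $R_1$ is the preimage under the quotient map of the kernel of the transgression into $\HHH^3_b(\Gam)$. That kernel has finite codimension, and any finite-codimensional subspace containing the closed subspace $\widehat{i^{\ast}}$-image plus something closed is closed. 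Then I would show that $R$ has finite codimension in $R_1$ and that $R$ is the image of a bounded operator with closed-range properties. For this step I would invoke the open mapping theorem after completing, using that $\QQQ(\Gg)/\HHH^1(\Gg)$ is Banach (it is closed in $\HHH^2_b(\Gg)$, which is Banach in degree 2). The standard fact then applies: a bounded operator from a Banach space whose range has a finite-dimensional algebraic complement in a Banach space has closed range. This Banach-space fact is exactly what converts the finite-dimensionality hypotheses into norm-closedness, and it is where I expect the real work to lie.
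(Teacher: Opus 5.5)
\paragraph{The gap.} Your strategy needs the range $R=\widehat{i^{\ast}}(\QQQ(\Gg)/\HHH^1(\Gg))$ to have finite codimension in the closed subspace $R_1$, so that the finite-cokernel principle (Corollary~\ref{cor=findim}) can be applied to $\widehat{i^{\ast}}$. But the Step~2 claim that (b) makes the ``second obstruction'' finite-dimensional ``whichever of the two spaces is assumed finite-dimensional'' is false. Put $K=\Ker(i^{\ast}\colon\HHH^2_b(\Gg)\to\HHH^2_b(\Ng)^{\Gg})=\ppi^{\ast}\HHH^2_b(\Gam)$ and $B=(i^{\ast})^{-1}(\delta\QNG)$. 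Then $R_1/R\cong B/(\delta\QQQ(\Gg)+K)\cong c^2_{\Gg}(B)/c^2_{\Gg}(K)$, and finite-dimensionality of $\HHH^2_b(\Gam)$ controls only the denominator.

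Concretely, take $\Gam$ amenable, so (a) and (b) hold with $\HHH^2_b(\Gam)=\HHH^3_b(\Gam)=0$. Then $R_1$ is the whole space, and $R_1/R=\WGN\cong\mathrm{Im}(c^2_{\Gg})\cap\mathrm{Im}(\ppi^{\ast})$ by Remark~\ref{rem=dimW}. This is infinite-dimensional, e.g.\ for $\Gg$ a free product of infinitely many genus-two surface groups and $\Ng=\CG$. The fundamental classes of the factors, pulled back along the retractions, are bounded, lie in $\ppi^{\ast}\HHH^2(\Gam)$ (they are cup products of degree-one classes), and are linearly independent. So under the theorem's hypotheses the range can have infinite codimension, and no ``finite algebraic cokernel implies closed range'' argument applied to $\widehat{i^{\ast}}$ can work; otherwise the theorem would simply be Proposition~\ref{prop=findim}.

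Two further steps are also unjustified:
\begin{itemize}
\item ``$c^2_{\Gg}(\beta)$ restricts to zero on $\Ng$, hence lies in $\ppi^{\ast}\HHH^2(\Gam)$'' does not follow from the five-term sequence. For $\Gg=\ZZ^2$, $\Ng=\ZZ\times 0$, the kernel of $\HHH^2(\Gg)\to\HHH^2(\Ng)$ is $\HHH^2(\ZZ^2)\neq 0$ while $\HHH^2(\Gam)=0$. Even granting it, the obstruction space would be finite-dimensional only when $\HHH^2(\Gam)$ is.
\item In Step~3, a finite-codimensional subspace need not be closed, and the argument there is circular. The right reason $R_1$ is closed is this: $i^{\ast}\HHH^2_b(\Gg)$ is closed by Corollary~\ref{cor=findim}, since its algebraic cokernel embeds in $\HHH^3_b(\Gam)$, and $R_1$ is its preimage under the continuous map $\delta$.
\end{itemize}

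\paragraph{The missing idea.} Place the finiteness on the kernel side, upstairs in $\HHH^2_b(\Gg)$, rather than on the cokernel of $\widehat{i^{\ast}}$. This is what the paper does.
\begin{itemize}
\item From (a), $i^{\ast}\HHH^2_b(\Gg)$ is closed, so by the open mapping theorem $i^{\ast}$ induces a topological isomorphism of $\HHH^2_b(\Gg)/K$ onto a closed subspace of $\HHH^2_b(\Ng)^{\Gg}$.
\item From (b), $K\cap\delta\QQQ(\Gg)$ has finite codimension in $K$. This is trivial if $\HHH^2_b(\Gam)$ is finite-dimensional. If $\HHH^2(\Gam)$ is, it holds because $\ppi^{\ast}\delta\QQQ(\Gam)\subseteq K\cap\delta\QQQ(\Gg)$ and $\HHH^2_b(\Gam)/\delta\QQQ(\Gam)$ injects into $\HHH^2(\Gam)$.
\item $\delta\QQQ(\Gg)$ is closed in $\HHH^2_b(\Gg)$ by your bi-Lipschitz observation. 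So Lemma~\ref{lem=findimquotient} shows its image in $\HHH^2_b(\Gg)/K$ is closed.
\item Hence $i^{\ast}\delta\QQQ(\Gg)=\delta(i^{\ast}\QQQ(\Gg))$ is closed in $\HHH^2_b(\Ng)^{\Gg}$. Pulling back along the bi-Lipschitz embedding of $\QNG/\HHH^1(\Ng)^{\Gg}$ gives closedness of $R$.
\end{itemize}
This pushes a closed subspace through an embedding instead of counting codimensions, which is why it works even when $\WGN$ is infinite-dimensional.
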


We note that conditions (a) and (b) are satisfied if $\Gam$ is amenable. Indeed, by \cite{Gromov1983}, for every $n\geq 1$ we have $\HHH^n_b(\Gam)=0$ in this case.

\section{Preliminaries}\label{sec=prelim}
\subsection{Quasimorphisms}\label{subsec=qm}
Let $\Gg$ be a group and $\Ng$ its normal subgroup. Recall from Subsection~\ref{subsec=W} that $\QQQ(\Ng)$ denotes the $\RR$-linear space of homogeneous quasimorphisms on $\Ng$, and that $\QQQ(\Ng)^{\Gg}$ means its invariant part under the adjoint action of $\Gg$. If $\phf$ is a quasimorphism on a group $\Gg$, then for all $\gl_1,\gl_2 \in \Gg$ we have 
\[
\phf(\gl_1\gl_2)\sim_{\DD(\phf)} \phf(\gl_1)+\phf(\gl_2);
\]
recall from our notation that $a\sim_C b$ means $|a-b|\leq C$.
We employ the following lemma in the present paper.

\begin{lemma}[{see \cite[Lemma~4.8]{KKMMM_KIAS}}]\label{lem=quasiinv}
Let $\muf\in \QQQ(\Ng)$. Assume that there exists $\DD\in \RR_{\geq 0}$ such that for every $\gl\in \Gg$ and $\xl\in \Ng$, $\muf(\gl\xl\gl^{-1})\sim_{\DD}\muf(\xl)$.
Then, $\muf\in \QQQ(\Ng)^{\Gg}$. In particular, we have $\QQQ(\Gg)=\QQQ(\Gg)^{\Gg}$.
\end{lemma}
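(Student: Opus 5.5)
We fix $\gl\in\Gg$ and set $\muf_\gl(\xl)=\muf(\gl\xl\gl^{-1})$ for $\xl\in\Ng$. The goal is to show $\muf_\gl=\muf$ using only homogeneity and the quasimorphism property, so the plan is to compare two homogeneous quasimorphisms that differ by a bounded amount. Since $\Ng$ is normal, $\xl\mapsto\gl\xl\gl^{-1}$ is an automorphism of $\Ng$. Hence $\muf_\gl$ is again a homogeneous quasimorphism on $\Ng$, with $\DD(\muf_\gl)=\DD(\muf)$: its defect is the same supremum taken over pairs reparametrized by this automorphism. Homogeneity of $\muf_\gl$ is immediate because conjugation sends $\xl^k$ to $(\gl\xl\gl^{-1})^k$.

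The key step is then the following. The difference $\muf_\gl-\muf$ is homogeneous, because both terms are, and by hypothesis it is bounded by $\DD$ on all of $\Ng$. Evaluating at $\xl^k$ and dividing by $k$, for every $k\in\NN$ we get
\[
|\muf_\gl(\xl)-\muf(\xl)|=\frac{1}{k}\,|\muf_\gl(\xl^k)-\muf(\xl^k)|\leq \frac{\DD}{k}.
\]
Letting $k\to\infty$ gives $\muf_\gl(\xl)=\muf(\xl)$, which is exactly the invariance required in Definition~\ref{defn=QNG}. Therefore $\muf\in\QQQ(\Ng)^{\Gg}$.

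For the \emph{In particular} part, we take $\Ng=\Gg$ and $\muf=\phf\in\QQQ(\Gg)$, so we need the uniform bound $\phf(\gl\xl\gl^{-1})\sim_{\DD}\phf(\xl)$. Applying the quasimorphism inequality twice gives
\[
\phf(\gl\xl\gl^{-1})\sim_{2\DD(\phf)}\phf(\gl)+\phf(\xl)+\phf(\gl^{-1}).
\]
Homogeneity gives $\phf(\gl^{-1})=-\phf(\gl)$, since the restriction to $\langle\gl\rangle$ is a homomorphism. So the hypothesis of the lemma holds with $\DD=2\DD(\phf)$, and the first part yields $\QQQ(\Gg)=\QQQ(\Gg)^{\Gg}$.

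None of these steps poses a real obstacle. The only point needing a moment's care is checking that $\muf_\gl$ is itself homogeneous, so that the difference $\muf_\gl-\muf$ scales linearly along powers; this holds because conjugation is a group automorphism of $\Ng$.
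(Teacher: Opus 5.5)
Your proof is correct. It is the standard homogenization argument: the function $\xl\mapsto\muf(\gl\xl\gl^{-1})-\muf(\xl)$ is homogeneous, since conjugation preserves powers and $\muf$ is homogeneous, and it is bounded, so it vanishes (the quasimorphism property of $\muf_\gl$ is never actually used). The paper gives no proof of its own and cites \cite[Lemma~4.8]{KKMMM_KIAS}; your two-step estimate with constant $2\DD(\phf)$ for the ``in particular'' part matches the computation the paper itself uses in \eqref{eq=quasiinv}.
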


\subsection{Bavard duality and its strengthenings}\label{subsec=Bavard}

Let $\Gg$ be a group. The \emph{Bavard duality theorem} for $\Gg$, established by Bavard \cite{Bavard},  states that for every $\hl\in [\Gg,\Gg]$,
\begin{equation}\label{eq=Bavard}
\scl_{\Gg}(\hl)=\sup_{[\phf]\in (\QQQ(\Gg)/\HHH^1(\Gg))\setminus \{0\}}\frac{|\phf(\hl)|}{2\DD_{\Gg}(\phf)}.
\end{equation}
Three strengthenings of the Bavard duality have been obtained as in Figure~\ref{fig=visual}, as we will give an overview in this subsection. Here, we set $\BBB_1(\Gg;\RR)=\partial \CCC_2(\Gg;\RR)$ in the setting of Subsection~\ref{subsec=Qchains}, which equals $\BBB'_1(\Gg,\Gg;\RR)$.

\begin{figure}[h]
  \centering
    \begin{tikzpicture}[auto]
    \node[draw, align=center] (01) at (0, 2) {original (\cite{Bavard}): \\ for $\hl \in \CG$};
    \node[draw, align=center] (11) at (7, 2) {generalized  (\cite{Calegari}): \\ for $c \in \BBB_1(\Gg;\mathbb{R})$};
    \node[draw, align=center] (00) at (0, 0) {mixed  (\cite{KKMM1}): \\ for $\yl \in \CGN$};
    \node[draw, align=center] (10) at (7, 0) {generalized mixed (\cite{KKMMM_KIAS}): \\ for $c \in \CR(\Gg,\Ng)$ };
    \draw[<-] (01) to node {$\Ng=\Gg$} (00);
    \draw[<-] (01) to node {$c=\hl$} (11);
    \draw[<-] (11) to node {$\Ng=\Gg$} (10);
    \draw[<-] (00) to node {$c=\yl$} (10);
    \end{tikzpicture}
    \caption{The original Bavard duality and its three strengthenings}  \label{fig=visual}
\end{figure}

First, we state the generalized mixed Bavard duality, obtained in \cite{KKMMM_KIAS}. Relevant definitions are given in Subsection~\ref{subsec=Qchains}. Here, note that the value of $\muf(c)$ does not depend on the representative $\muf$ of $[\muf]$ because $c\in \mathcal{C}_{\QQ}(\Gg,\Ng)$  (see Lemma \ref{lem=CGNchain} below).

\begin{theorem}[{generalized mixed Bavard duality theorem for rational chains, \cite[Theorem~5.39]{KKMMM_KIAS}}]\label{thm=gmBavard}
Let $\Gg$ be a group and $\Ng$ its normal subgroup. Then, for every $c\in \mathcal{C}_{\QQ}(\Gg,\Ng)$, we have
\[
\scl_{\Gg,\Ng}(c)=\sup_{[\muf]\in (\QQQ(\Ng)^{\Gg}/\HHH^1(\Ng)^{\Gg})\setminus \{0\}}\frac{|\muf(c)|}{2\DD_{\Ng}(\muf)}.
\]
\end{theorem}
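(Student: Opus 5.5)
We prove the two inequalities separately. Both sides scale the same way: the left side by Definition~\ref{defn=sclQchain}(3), and the right side because $\muf(kc)=k\muf(c)$. So it suffices to treat $c\in\CZ(\Gg,\Ng)$ of the form \eqref{eq=Zchain}.

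\emph{The inequality ``$\geq$''.} Fix $\muf\in\QNG$ and $k\in\NN$. Suppose $\xi_1\cdots\xi_m\eta_1^{-1}\cdots\eta_n^{-1}$ is a product of $\ell$ simple $(\Gg,\Ng)$-commutators, where the $\xi_i$ and $\eta_j$ are $\Gg$-conjugates of $\xl_i^k$ and $\check{\xl}_j^k$. For a simple commutator we have $[\gl,\xl]=(\gl\xl\gl^{-1})\xl^{-1}$. Hence $\Gg$-invariance and homogeneity give $|\muf([\gl,\xl])|\leq \DD(\muf)$. A product of $\ell$ such commutators therefore has $|\muf|\leq(2\ell-1)\DD(\muf)$. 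On the other hand, invariance gives
\[
\muf(\xi_1\cdots\eta_n^{-1})\sim_{(m+n-1)\DD(\muf)} k\,\muf(c).
\]
Taking the infimum over $\ell$, dividing by $k$ and letting $k\to\infty$ yields $|\muf(c)|\leq 2\DD(\muf)\scl_{\Gg,\Ng}(c)$. The same estimate applied to $\muf\in\HHH^1(\Ng)^{\Gg}$ gives $\muf(c)=0$, so the right-hand side depends only on $[\muf]$.

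\emph{The inequality ``$\leq$'' via Hahn--Banach.} Let $V=\CCC_1(\Ng;\QQ)/U$, where $U$ is spanned by the chains $\xl^k-k\xl$ and $\gl\xl\gl^{-1}-\xl$ for $\xl\in\Ng$, $\gl\in\Gg$, $k\in\ZZ$. By the well-definedness results quoted from \cite{KKMMM_KIAS}, $\scl_{\Gg,\Ng}$ is a seminorm on $\CQ(\Gg,\Ng)$ that is constant on cosets of $U\cap\CQ(\Gg,\Ng)$. Indeed, cl is defined via conjugacy classes, and the $k$-th power stabilization absorbs $\xl^k\sim k\xl$. Hence $\scl_{\Gg,\Ng}$ descends to a seminorm $p$ on the image $W$ of $\CQ(\Gg,\Ng)$ in $V$.

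The steps are then as follows.
\begin{enumerate}[label=(\alph*)]
\item Define $f$ on $\QQ\bar c\subseteq W$ by $f(\bar c)=\scl_{\Gg,\Ng}(c)$. Extend it by the Hahn--Banach theorem to a $\QQ$-linear $f\colon W\to\RR$ with $|f|\leq p$. The one-step extension and Zorn argument work verbatim over $\QQ$ with real values.
\item Extend $f$ further, $\QQ$-linearly and arbitrarily, to all of $V$. No bound is needed outside $W$.
\item Set $\muf(\xl)=f([\xl])$ for $\xl\in\Ng$. Since $f$ kills $U$, $\muf$ is homogeneous and $\Gg$-invariant.
\item For $\xl,\yl\in\Ng$, the chain $\xl+\yl-\xl\yl=\partial(\xl,\yl)$ lies in $\CQ(\Gg,\Ng)$. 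Therefore
\[
|\muf(\xl)+\muf(\yl)-\muf(\xl\yl)|\leq \scl_{\Gg,\Ng}(\xl+\yl-\xl\yl).
\]
\end{enumerate}

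The key estimate is $\scl_{\Gg,\Ng}(\xl+\yl-\xl\yl)\leq\tfrac12$, which gives $\DD(\muf)\leq\tfrac12$ while $\muf(c)=\scl_{\Gg,\Ng}(c)$. This yields ``$\leq$'' when $\scl_{\Gg,\Ng}(c)>0$, and the case $\scl_{\Gg,\Ng}(c)=0$ is trivial.

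\emph{Main obstacle.} I expect the main obstacle to be that key estimate. One must show that $\xl^k\yl^k(\xl\yl)^{-k}$, suitably conjugated, is a product of at most $k/2+O(1)$ simple $(\Gg,\Ng)$-commutators. I would adapt the classical genus count for $\cl_{\Gg}$, which comes from a pair-of-pants surface with $k$-fold covers. The point to check is that every commutator produced has one entry in $\Ng$, which should hold because $\xl$ and $\yl$ both lie in $\Ng$. If this direct count fails, a fallback is to obtain the bound from the ordinary mixed Bavard duality \eqref{eq=mixedBavard} together with the $+\tfrac12$ weak triangle inequality recalled in Subsection~\ref{subsec=coarsekernel}.
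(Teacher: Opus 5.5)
The paper does not prove this theorem. It imports it from \cite[Theorem~5.39]{KKMMM_KIAS} and uses it as a black box, so your argument has to stand on its own, and it does. Your proposal is correct, provided the key estimate is carried out as you plan.

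Your route is Calegari's Hahn--Banach proof of generalized Bavard duality, transplanted to the mixed setting. The easy inequality comes from the standard count; for $\ell=0$ use the bound $2\ell\,\DD(\muf)$ rather than $(2\ell-1)\DD(\muf)$. The hard inequality comes from a $\QQ$-linear extension of $\scl_{\Gg,\Ng}$ from $\QQ c$. This produces $\muf$ with $\muf(c)=\scl_{\Gg,\Ng}(c)$ and $\DD(\muf)\le\frac12$, so you even get an extremal quasimorphism (compare Proposition~\ref{prop=extremal}).

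The passage to $V$ is unnecessary, though harmless. The generators of $U$ lie in $\CQ(\Gg,\Ng)$ and have $\scl_{\Gg,\Ng}=0$, so any $f$ dominated by $\scl_{\Gg,\Ng}$ kills $U$ automatically.

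The framework around the paper is different. It works with the filling norm $\|\cdot\|'$ on $\BBB'_1(\Gg,\Ng;\RR)$ and with $\Ng$-quasimorphisms (Propositions~\ref{prop=dualQNG} and \ref{prop=dualQN}). There, functional analysis alone gives $\sup_{[\muf]}|\muf(c)|/\DD_{\Ng}(\muf)=\|c\|_{\hhh}$. So the cited theorem is equivalent to $\scl_{\Gg,\Ng}=\frac12\|\cdot\|_{\hhh}$ on $\CQ(\Gg,\Ng)$, which is Theorem~\ref{thm=refinedgmBavard}(1). That route buys the isometric identification of the dual and the Banach-space machinery used later. Yours gives the sup formula with elementary tools only.

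On the key estimate, two remarks.

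First, in this paper the bound $\scl_{\Gg,\Ng}(\xl+\yl-\xl\yl)\le\frac12$ (Remark~\ref{rem=coarsehom}) is deduced from the very theorem you are proving, so you must prove it independently. Your fallback does not do this. The weak triangle inequality concerns elements of $[\Gg,\Ng]$ and is itself a consequence of Bavard duality. Moreover, $\xl,\yl$ are arbitrary elements of $\Ng$. Drop the fallback.

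Second, your main plan works, and more cleanly than you suggest. $\Ng$-conjugates are $\Gg$-conjugates and $\mathcal{S}_{\Ng,\Ng}\subseteq\mathcal{S}_{\Gg,\Ng}$, so $\scl_{\Gg,\Ng}(\xl+\yl-\xl\yl)\le\scl_{\Ng}(\xl+\yl-\xl\yl)$, and the classical count applies inside $\Ng$. For odd $k=2g+1$, take the cyclic $k$-fold cover of the pair of pants given by $a,b\mapsto 1\in\ZZ/k$. Then $ab\mapsto 2$ is also a generator, so the cover has genus $g$ and three boundary components, each covering its boundary curve with degree $k$. This gives $\cl_{\Ng}(\xl^k+\yl^k-(\xl\yl)^k)\le g$, and dividing by $k$ and letting $k\to\infty$ along odd $k$ yields $\frac12$. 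All commutators and conjugators lie in $\langle \xl,\yl\rangle\subseteq\Ng$, so both entries of each commutator lie in $\Ng$, not just one.
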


By letting $\Ng=\Gg$ in Theorem~\ref{thm=gmBavard}, we recover the \emph{generalized Bavard duality theorem} (for rational chains), which was proved by Calegari \cite{Calegari}.  By letting $c=\yl$ for $\yl\in \CGN$ in Theorem~\ref{thm=gmBavard}, we recover the \emph{mixed Bavard duality theorem} \eqref{eq=mixedBavard}: this  was obtained in \cite{KKMM1} in the full generality. For this reduction, we use the following lemma.

\begin{lemma}[{see \cite[Lemma~5.28]{KKMMM_KIAS}}]\label{lem=CGNchain}
Let $\Gg$ be a group and $\Ng$ its normal subgroup. Then the following are equivalent for $c \in \CCC_1(\Ng; \ZZ)$.
\begin{enumerate}[label=\textup{(\arabic*)}]
\item $c$ belongs to $\CZ(\Gg,\Ng)$.
\item There exist $m,n\in \ZZ_{\geq 0}$ and $\xl_1, \cdots, \xl_m, \xbr_1, \cdots, \xbr_n \in \Ng$ such that
\[ c = \xl_1 + \cdots + \xl_m - \xbr_1 - \cdots - \xbr_n\]
and $\xl_1 \cdots \xl_m \xbr_1^{-1} \cdots \xbr_n^{-1} \in [\Gg,\Ng]$.
\end{enumerate}
In particular, for every $\yl\in \CGN$, the element $\yl$ in $\CCC_1(\Ng;\ZZ)$ belongs to $\mathcal{C}_{\ZZ}(\Gg,\Ng)$.
\end{lemma}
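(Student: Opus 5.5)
We must prove Lemma~\ref{lem=CGNchain}: for $c\in \CCC_1(\Ng;\ZZ)$, $c\in\CZ(\Gg,\Ng)$ if and only if $c$ can be written as $\xl_1+\cdots+\xl_m-\xbr_1-\cdots-\xbr_n$ with $\xl_1\cdots\xl_m\xbr_1^{-1}\cdots\xbr_n^{-1}\in\CGN$. The plan is to use the homomorphism
\[
\theta\colon \CCC_1(\Gg;\ZZ)\to \Gg^{\mathrm{ab}},\qquad \gl\mapsto [\gl]\in \Gg/\CG,
\]
and its mixed analogue, which in the end is a homomorphism $\CCC_1(\Ng;\ZZ)\to \Ng/\CGN$.

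Step 1 (a well-defined invariant). The group $\Ng/\CGN$ is abelian, since $[\Ng,\Ng]\subseteq\CGN$. So $x\mapsto x\CGN$ extends $\ZZ$-linearly to a homomorphism $\theta_{\Ng}\colon\CCC_1(\Ng;\ZZ)\to\Ng/\CGN$. For any expression of $c$ as in (2), $\theta_{\Ng}(c)$ is the class of $\xl_1\cdots\xl_m\xbr_1^{-1}\cdots\xbr_n^{-1}$. Hence condition (2) for one expression is equivalent to $\theta_{\Ng}(c)=0$, and also to (2) for every expression of $c$ with the same signed multiset. Note that expressions with cancelling pairs $x-x$ are also allowed; they do not change $\theta_{\Ng}$.

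Step 2 ((1)$\Rightarrow$(2)). Let $c=\partial\sigma$ with $\sigma\in\CCC_2'(\Gg,\Ng;\ZZ)$ and $c\in\CCC_1(\Ng;\ZZ)$. The difficulty is that $\partial\sigma$ involves terms $\gl_1,\gl_2,\gl_1\gl_2$ that may lie outside $\Ng$, so $\theta_{\Ng}$ cannot be applied term by term. Instead, work in the group $A=\Gg/\CGN$. This group is not abelian in general, so I would pass to a suitable abelian quotient. The first thing to try is to compose with the map $\CCC_1(\Gg;\ZZ)\to\ZZ[\Gg/\Ng]\otimes\ldots$.

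A cleaner route is to extend $\theta_{\Ng}$ to a homomorphism $\Theta\colon \CCC_1(\Gg;\ZZ)\to \Ng/\CGN$ that vanishes on $\BBB_1'(\Gg,\Ng;\ZZ)$. Choose a set-theoretic section $s\colon\Gam\to\Gg$ with $s(1)=1$ and define
\[
\Theta(\gl)=\gl\, s(p(\gl))^{-1}\,\CGN.
\]
This agrees with $\theta_{\Ng}$ on $\Ng$. We then check $\Theta(\partial(\gl_1,\gl_2))=0$ when $\gl_1\in\Ng$ or $\gl_2\in\Ng$.

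If $\gl_1\in\Ng$, the value is
\[
\gl_2 s^{-1}\cdot(\gl_1\gl_2s^{-1})^{-1}\cdot \gl_1=\gl_2\gl_2^{-1}\gl_1^{-1}\gl_1=1
\]
modulo $\CGN$, where $s=s(p(\gl_2))$.

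If $\gl_2\in\Ng$, the value is $\gl_2\cdot(\gl_1\gl_2s^{-1})^{-1}\cdot\gl_1 s^{-1}$ with $s=s(p(\gl_1))$, which equals $\gl_2\, s\gl_2^{-1}\gl_1^{-1}\gl_1s^{-1}=\gl_2\cdot s\gl_2^{-1}s^{-1}$. This lies in $\CGN$ because $\Ng/\CGN$ is central-like: the conjugation action of $\Gg$ on $\Ng/\CGN$ is trivial.

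Hence $\Theta$ kills $\BBB_1'$, so $\theta_{\Ng}(c)=\Theta(c)=0$, which gives (2).

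Step 3 ((2)$\Rightarrow$(1)). Given (2), we must show $c\in\BBB_1'(\Gg,\Ng;\ZZ)$. The intersection with $\CCC_1(\Ng;\ZZ)$ is automatic. Using 2-chains $(x,y)$ with $x,y\in\Ng$, we have $x+y\equiv xy$ modulo $\BBB_1'$. Also $x^{-1}\equiv -x$, via $(x,x^{-1})$ together with $1=\partial(1,1)$. Therefore $c\equiv w$ modulo $\BBB_1'$, where $w=\xl_1\cdots\xl_m\xbr_1^{-1}\cdots\xbr_n^{-1}\in\CGN$.

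It remains to see that elements of $\CGN$ lie in $\BBB_1'$. Products of elements of $\Ng$ reduce, by the same relation, to sums, so it suffices to treat a single commutator $[\gl,\xl]$. We have $[\gl,\xl]\equiv \gl\xl\gl^{-1}-\xl$. Applying $\partial(\gl,\xl\gl^{-1})$ and $\partial(\gl\xl\gl^{-1},\gl)$, whose entries lie in $\Ng$, gives
\[
\gl\xl\gl^{-1}\equiv \gl\xl-\gl\equiv \xl\gl\gl^{-1}\cdots,
\]
and one checks directly that $\gl\xl\gl^{-1}-\xl\in\BBB_1'$. This bookkeeping is routine.

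The main obstacle is Step 2, namely finding the extension $\Theta$. The key fact it relies on is the triviality of the $\Gg$-action on $\Ng/\CGN$.
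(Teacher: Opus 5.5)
The paper does not prove this lemma itself; it cites \cite[Lemma~5.28]{KKMMM_KIAS}. So your argument has to stand on its own.

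\textbf{Steps 1 and 2 are correct.} The map $\Theta(g)=g\,s(p(g))^{-1}[G,N]$ is a homomorphism $\CCC_1(G;\ZZ)\to N/[G,N]$. Because $s(1)=1$, it extends $\theta_N$. Both of your computations are right: $\Theta$ kills $\partial(g_1,g_2)$ whenever $g_1\in N$ or $g_2\in N$. In the second case this holds because $g_2sg_2^{-1}s^{-1}=[g_2,s]=[s,g_2]^{-1}\in[G,N]$. Twisting by a section is exactly what handles the terms $g_1$ and $g_1g_2$ that lie outside $N$, and it gives (1)$\Rightarrow$(2) cleanly. The abandoned sentence about $\ZZ[G/N]\otimes\ldots$ should be deleted.

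\textbf{The error is at the end of Step 3.} The $2$-chain $(g,xg^{-1})$ is \emph{not} in $\CCC'_2(G,N;\ZZ)$ in general. Here $g\in G$ is arbitrary, and $xg^{-1}\in N$ holds only if $g\in N$. So your claim that both chains have an entry in $N$ is false. The displayed string of congruences, which also trails off, therefore does not establish $gxg^{-1}-x\in\BBB'_1(G,N;\ZZ)$.

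\textbf{The fix.} Keep your chain $(gxg^{-1},g)$ and pair it with $(g,x)$, which is admissible since $x\in N$:
\[
\partial(gxg^{-1},g)-\partial(g,x)=(g-gx+gxg^{-1})-(x-gx+g)=gxg^{-1}-x .
\]
Your relations for products and inverses in $N$ give $[g,x]=(gxg^{-1})\,x^{-1}\equiv gxg^{-1}-x$. Combining the two shows $[g,x]\in\BBB'_1(G,N;\ZZ)$, and the rest of Step 3 goes through unchanged.

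With this repair the proof is complete. The final ``in particular'' assertion is just (2)$\Rightarrow$(1) with $m=1$, $n=0$ and $x_1=y$.
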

Finally, by letting $c=\hl$ in the generalized Bavard duality theorem, or by letting $\Gg=\Ng$ in \eqref{eq=mixedBavard}, we recover the original Bavard duality theorem \eqref{eq=Bavard} (by Lemma~\ref{lem=quasiinv}, $\QQQ(\Gg)/\HHH^1(\Gg)=\QQQ(\Gg)^{\Gg}/\HHH^1(\Gg)^{\Gg}$).

As we discussed in Subsection~\ref{subsec=strategy}, a \emph{refined version} of the generalized mixed Bavard duality theorem (Theorem~\ref{thm=gmBavard}) plays a central role in the present paper; we present this as Theorem~\ref{thm=refinedgmBavard}.

\subsection{Basic facts on functional analysis}\label{subsec=FA}

Here we recall standard results on functional analysis; we refer the reader to \cite{Rudin} for a treatise on functional analysis. For a real normed space $(X,\|\cdot\|_X)$, we write $\langle \cdot,\cdot\rangle \colon X^{\ast}\times X\to \RR$ for the duality pairing \relax for $X$. Here, the continuous dual $X^{\ast}$ is equipped with the dual norm
\[
\|\cdot\|_{X^{\ast}}\colon X^{\ast}\to \RR_{\geq 0};\quad \|\phi\|_{X^{\ast}}=\sup_{x\in X\setminus\{0\}}\frac{|\langle \phi,x\rangle|}{\|x\|_X}.
\]
\relax Strictly speaking, for two real normed spaces $(X,\|\cdot\|_X)$ and $(Y,\|\cdot\|_Y)$, we call an $\RR$-bilinear form $\langle \cdot,\cdot\rangle \colon Y\times X\to \RR$ the \emph{duality pairing for $X$} if the map 
\[
Y\to \mathrm{Hom}_{\RR}(X,\RR);\ y\mapsto \langle y,\cdot \rangle, \quad \textrm{where}\quad \langle y,\cdot \rangle\colon X\to \RR;\ x\mapsto \langle y,x\rangle,
\]
induces a surjective map from $Y$ to $X^{\ast}$ that is isometric (namely, for every $y\in Y$, we have $\|\langle y,\cdot\rangle\|_{X^{\ast}}=\|y\|_Y$). Here, we naturally regard $X^{\ast}$ as a subset of  $\mathrm{Hom}_{\RR}(X,\RR)$. For a subspace $U\subseteq X$ and $V\subseteq X^{\ast}$, two norm-closed subspaces $U^{\perp}\subseteq X^{\ast}$ and ${}^{\perp}V\subseteq X$ are defined as
\begin{align*}
U^{\perp}&=\{\varphi\in X^{\ast}\;|\; \langle \varphi,x\rangle=0\mathrm{\ for\ every\ }x\in U\},\\
{}^{\perp}V&=\{x\in X\;|\; \langle \varphi,x\rangle=0\mathrm{\ for\ every\ }\varphi\in V\},
\end{align*}
respectively. For a subspace $U$ of a real normed space $(X,\|\cdot\|_X)$, we equip the quotient space $X/U$ with the quotient seminorm
\[
\|\cdot\|_{X/U}\colon X/U\to \RR_{\geq 0};\quad \|[x]\|_{X/U}=\inf_{u\in U}\|x+u\|_{X}.
\]
We note that $\|\cdot\|_{X/U}$ is a genuine norm if and only if  $U$ is a norm-closed subspace in $X$. Hence, when taking the quotient space $X/U$ in analytic setting, we mainly focus on the case where $U$ is norm-closed in $X$. A linear operator $S\colon (X,\|\cdot\|_X)\to (Y,\|\cdot\|_Y)$ between two real normed spaces is called a bounded (linear) operator if its operator norm
\[
\|S\|_{\op}=\sup_{x\in X\setminus \{0\}}\frac{\|Sx\|_Y}{\|x\|_X}
\]
is finite. For a bounded linear operator $S\colon X\to Y$ between two real normed spaces, $S^{\dagger}\colon Y^{\ast}\to X^{\ast}$ denotes the adjoint operator, that is, the operator defined by
\[
\langle S^{\dagger}\psi,x\rangle_X=\langle \psi,Sx\rangle_Y
\]
for every $x\in X$ and $\psi\in Y^{\ast}$. Here, we use the symbol $S^{\dagger}$, instead of $S^{\ast}$, to distinguish it from $i^{\ast}\colon \QQQ(\Gg)\to \QQQ(\Ng)^{\Gg}$ (and other homomorphisms appearing in group cohomology). Then, we have $\|S^{\dagger}\|_{\op}=\|S\|_{\op}$ (compare with Proposition~\ref{prop=FA}~(1) below).

\begin{lemma}[{see \cite[Theorem~4.9]{Rudin}}]\label{lem=dualquotient}
Let $X$ be a real normed space and $U$ a subspace. 
\begin{enumerate}[label=\textup{(\arabic*)}]
  \item The duality pairing $\langle \cdot,\cdot\rangle_X \colon X^{\ast}\times X\to \RR$ induces a well-defined pairing
\[
\langle \cdot,\cdot\rangle_U\colon \big(X^{\ast}/U^{\perp}\big)\times U  \to \RR;  \quad \langle [\phi],x\rangle_U =\langle \phi,x\rangle_X,
\]
and this $\langle \cdot,\cdot\rangle_U$ is the duality pairing for $U$. In particular, $U^{\ast}$ is isometrically isomorphic to $X^{\ast}/U^{\perp}$.
  \item Assume that $U$ is norm-closed in $X$. Then the duality pairing $\langle \cdot,\cdot\rangle_X$ induces a well-defined pairing
\[
\langle \cdot,\cdot\rangle_{X/U}\colon U^{\perp}\times X/U \to \RR;   \quad \langle \phi,[x]\rangle_{X/U} =\langle \phi,x\rangle_X,
\]
and this $\langle \cdot,\cdot\rangle_{X/U}$ is the duality pairing for $X/U$. In particular, $(X/U)^{\ast}$ is isometrically isomorphic to $U^{\perp}$.
\end{enumerate}
\end{lemma}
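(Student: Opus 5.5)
The statement to prove is Lemma~\ref{lem=dualquotient}, a standard duality fact (Rudin, Theorem~4.9). The plan is to handle both parts with the Hahn--Banach theorem and direct norm estimates.

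For (1), I first check that $\langle[\phi],x\rangle_U=\langle\phi,x\rangle_X$ is well defined. If $\phi-\phi'\in U^{\perp}$, the two pairings agree on every $x\in U$. Next, the induced map $X^{\ast}/U^{\perp}\to U^{\ast}$, $[\phi]\mapsto\phi|_U$, is well defined, linear and injective, since $\phi|_U=0$ exactly when $\phi\in U^{\perp}$.

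Surjectivity follows from Hahn--Banach: every $\psi\in U^{\ast}$ extends to some $\phi\in X^{\ast}$ with $\|\phi\|_{X^{\ast}}=\|\psi\|_{U^{\ast}}$. For the isometry, any $\phi'\in\phi+U^{\perp}$ restricts to $\psi$, so $\|\psi\|\le\|\phi'\|$, which gives $\|\psi\|\le\|[\phi]\|_{X^{\ast}/U^{\perp}}$. The norm-preserving extension shows that this infimum is attained and equals $\|\psi\|$. Hence the quotient norm equals the dual norm, and $\langle\cdot,\cdot\rangle_U$ is the duality pairing for $U$.

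For (2), with $U$ closed so that $X/U$ is normed, I first check well-definedness. For $\phi\in U^{\perp}$, the value $\langle\phi,x+u\rangle=\langle\phi,x\rangle$ does not depend on the representative. The functional $[x]\mapsto\langle\phi,x\rangle$ is bounded with norm at most $\|\phi\|$, because $|\langle\phi,x\rangle|=|\langle\phi,x+u\rangle|\le\|\phi\|\|x+u\|$ for every $u\in U$; taking the infimum over $u$ gives $|\langle\phi,x\rangle|\le\|\phi\|\,\|[x]\|_{X/U}$.

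Conversely, given $\psi\in(X/U)^{\ast}$, I set $\phi=\psi\circ\pi$, where $\pi\colon X\to X/U$ is the quotient map. This $\phi$ vanishes on $U$, and $\|\phi\|\le\|\psi\|$ because $\|\pi\|_{\op}\le1$. That gives surjectivity, and combined with the previous bound it gives $\|\phi\|=\|\psi\|$. Injectivity is immediate, since $\phi$ is recovered from its functional as $\phi(x)=\psi([x])$.

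The only step that is not a routine check is the isometry in (1). It rests entirely on the norm-preserving Hahn--Banach extension, which guarantees that the infimum in the quotient norm is attained. Everything else is a direct verification.
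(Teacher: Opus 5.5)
Your proposal is correct and is essentially the standard argument behind \cite[Theorem~4.9]{Rudin}, which the paper cites rather than proves. Part (1) uses norm-preserving Hahn--Banach extensions, and part (2) composes with the quotient map $\pi$ using $\|\pi\|_{\op}\leq 1$; your norm estimates give the required isometries.
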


\begin{theorem}[{Banach's closed range theorem for bounded operators; see \cite[Theorems~4.14, 4.12 and 4.7]{Rudin}}]\label{thm=Banach}
Let $X,Y$ be two real Banach spaces and $S\colon X\to Y$ a bounded linear operator. Then, the following are all equivalent.
\begin{enumerate}[label=\textup{(\roman*)}]
  \item $S(X)$ is norm-closed in $Y$.
  \item $S^{\dagger}(Y^{\ast})$ is norm-closed in $X^{\ast}$.
  \item $S(X)={}^{\perp}\Ker(S^{\dagger})$.
  \item $S^{\dagger}(Y^{\ast})=\Ker(S)^{\perp}$.
\end{enumerate}
\end{theorem}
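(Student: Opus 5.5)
The plan is to cite the closed range theorem rather than reprove it from scratch; the equivalences are the standard ones in \cite{Rudin}. I would organize the argument around two duality facts.

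The first fact is that for every bounded $S$ (no completeness needed), $\overline{S(X)}={}^{\perp}\Ker(S^{\dagger})$, and $\overline{S^{\dagger}(Y^{\ast})}^{\,w^\ast}=\Ker(S)^{\perp}$. Both follow from Hahn--Banach. For the first, note that $\psi\in\Ker(S^{\dagger})$ if and only if $\psi$ annihilates $S(X)$, and a norm-closed subspace equals the preannihilator of its annihilator. For the second, weak$^\ast$-closed subspaces of $X^{\ast}$ equal the annihilators of their preannihilators, and ${}^{\perp}(S^{\dagger}(Y^{\ast}))=\Ker(S)$.

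From the first fact, (i)$\Leftrightarrow$(iii) is immediate. For the second, we get (iv)$\Rightarrow$(ii), since an annihilator is norm-closed.

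The core is (i)$\Rightarrow$(iv) and (ii)$\Rightarrow$(i), and both use completeness via the open mapping theorem.

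For (i)$\Rightarrow$(iv), assume $S(X)$ is closed, so it is Banach. Then $\bar S\colon X/\Ker S\to S(X)$ is a bounded bijection of Banach spaces, hence an isomorphism. Given $\phi\in\Ker(S)^{\perp}$, it descends to a functional on $X/\Ker S$. Composing with $\bar S^{-1}$ gives a bounded functional on $S(X)$, and extending it to $Y$ by Hahn--Banach gives $\psi$ with $S^{\dagger}\psi=\phi$. The reverse inclusion $S^{\dagger}(Y^{\ast})\subseteq\Ker(S)^{\perp}$ is trivial.

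For (ii)$\Rightarrow$(i), which I expect to be the main obstacle, replace $Y$ by $Z=\overline{S(X)}$ and $S$ by $S_0\colon X\to Z$, which has dense range. Then $S_0^{\dagger}$ is injective. Its range coincides with $S^{\dagger}(Y^{\ast})$, because functionals on $Z$ extend to $Y$ by Hahn--Banach. So by (ii), $S_0^{\dagger}$ is an injective operator with closed range between Banach spaces, and hence is bounded below: $\|S_0^{\dagger}\psi\|\ge c\|\psi\|$. The key lemma to prove is then that this lower bound implies $S_0$ maps the unit ball onto a set whose closure contains the ball of radius $c$ in $Z$. This is argued via Hahn--Banach separation: if some $z$ with $\|z\|<c$ lay outside $\overline{S_0(B_X)}$, a separating $\psi$ would give $\|S_0^{\dagger}\psi\|<c\|\psi\|$. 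The standard iteration in the proof of the open mapping theorem, which uses completeness of $X$, then upgrades this to $S_0(B_X)\supseteq$ an open ball. Therefore $S_0$ is surjective, that is, $S(X)=Z$ is closed.

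Combining these steps gives (i)$\Leftrightarrow$(iii), (i)$\Rightarrow$(iv)$\Rightarrow$(ii)$\Rightarrow$(i), which proves all four equivalent.
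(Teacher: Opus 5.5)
Your proof is correct: the chain (i)$\Leftrightarrow$(iii) and (i)$\Rightarrow$(iv)$\Rightarrow$(ii)$\Rightarrow$(i) is sound, and the key lemma in (ii)$\Rightarrow$(i) (a lower bound on $S_0^{\dagger}$ forces $S_0$ to be onto) is exactly Rudin's Lemma~4.13. The paper gives no proof of its own and only cites Rudin's Theorems~4.7, 4.12 and 4.14; your argument is essentially the proof given there.
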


\begin{remark}\label{rem=complete}
We warn that Banach's closed range theorem will \emph{fail} if we drop the condition of completeness of $X$ or $Y$. For instance, assume that $S$ satisfies one of the four (equivalent) conditions in Theorem~\ref{thm=Banach}.  Take a proper dense subspace $X_0\subseteq X$ and set $S_0=S|_{X_0}$. Then, $X_0^{\ast}$ equals $X^{\ast}$ and $S_0^{\dagger}$ equals $S^{\dagger}$. Hence, the counterpart of (ii) for $S_0$ holds; namely, $S_0^{\dagger}(Y^{\ast})$ is norm-closed in $X_0^{\ast}$. However, there is \emph{no} guarantee that the counterpart of (iv), $S_0^{\dagger}(Y^{\ast})=\Ker(S_0)^{\perp}$, holds. Indeed, we have $S_0^{\dagger}(Y^{\ast})=S^{\dagger}(Y^{\ast})$ equals $\Ker(S)^{\perp}$ by (iv), but $\Ker(S_0)=\Ker(S)\cap X_0$ 
can be considerably small compared with $\Ker(S)$ itself. 
\end{remark}

We also recall the following results, which are corollaries to the Hahn--Banach theorem and the open mapping theorem, respectively. 

\begin{proposition}[{see \cite[Theorem~4.3 and Corollary~2.12]{Rudin}}]\label{prop=FA}
The following hold true.
\begin{enumerate}[label=\textup{(\arabic*)}]
  \item Let $(X,\|\cdot\|_X)$ be a real normed space and $\langle \cdot,\cdot \rangle \colon (X^{\ast},\|\cdot\|_{X^{\ast}})\times (X,\|\cdot\|_X)\to\RR$ the duality pairing. Then, for every $x\in X$ we have
\[
\|x\|_X=\sup_{\phi\in X^{\ast}\setminus \{0\}}\frac{|\langle \phi,x\rangle |}{\|\phi\|_{X^{\ast}}}.
\]
 \item Let $X$ and $Y$ be two real Banach spaces. Let $S\colon X\to Y$ be a bounded linear operator. Assume that $S$ is bijective. Then, $S$ is an isomorphism between Banach spaces, namely, the set-theoretic inverse map $S^{-1}\colon Y\to X$ is a bounded operator.
\end{enumerate}
\end{proposition}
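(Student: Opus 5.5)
Both assertions of Proposition~\ref{prop=FA} are standard consequences of the Hahn--Banach theorem and of the Baire category theorem. I would prove them directly in the following order, rather than only quoting \cite{Rudin}.

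For (1), the inequality ``$\geq$'' follows at once from the definition of the dual norm. Indeed, $|\langle \phi,x\rangle|\leq \|\phi\|_{X^{\ast}}\|x\|_X$ for every $\phi\in X^{\ast}$, so every quotient on the right-hand side is at most $\|x\|_X$. For ``$\leq$'', the case $x=0$ is trivial, since both sides vanish (with the convention that a supremum over the empty set is zero, if $X=0$). For $x\neq 0$, I would define the linear functional $\phi_0$ on the line $\RR x$ by $\phi_0(tx)=t\|x\|_X$. This functional is dominated by the sublinear functional $\|\cdot\|_X$. The real Hahn--Banach extension theorem then gives $\phi\in X^{\ast}$ with $\phi|_{\RR x}=\phi_0$ and $|\langle\phi,y\rangle|\leq \|y\|_X$ for all $y\in X$. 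Hence $\|\phi\|_{X^{\ast}}=1$ and $\langle \phi,x\rangle=\|x\|_X$, so the supremum is attained and equals $\|x\|_X$.

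For (2), the plan is the open mapping theorem. Let $B_r$ denote the open ball of radius $r$ centered at $0$ in $X$.
\begin{enumerate}[label=(\alph*)]
\item Since $S$ is surjective, $Y=\bigcup_{n\in\NN}\overline{S(B_n)}$. Because $Y$ is complete, the Baire category theorem shows that some $\overline{S(B_n)}$ has nonempty interior.
\item By symmetry and convexity of $S(B_n)$, together with $\overline{S(B_n)}-\overline{S(B_n)}\subseteq\overline{S(B_{2n})}$, I would obtain $\delta>0$ such that $\overline{S(B_1)}$ contains the ball of radius $\delta$ around $0$ in $Y$. By scaling, $\overline{S(B_{2^{-k}})}$ then contains the ball of radius $2^{-k}\delta$ for every $k\geq 0$.
\item Given $y\in Y$ with $\|y\|_Y<\delta/2$, I would successively pick $x_k\in B_{2^{-k}}$ such that
\[
\Big\|y-S(x_1+\cdots+x_k)\Big\|_Y<2^{-k-1}\delta.
\]
The series $\sum_k x_k$ converges in $X$ because $X$ is complete, and its limit $x$ satisfies $\|x\|_X<1$ and $Sx=y$ by continuity of $S$.
\end{enumerate}
Thus $S(B_1)$ contains the ball of radius $\delta/2$. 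Since $S$ is bijective, this gives $\|S^{-1}y\|_X\leq (2/\delta)\|y\|_Y$ for all $y\in Y$, so $S^{-1}$ is a bounded operator.

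The only step with real content is step~(c), the successive-approximation argument that removes the closure in step~(b). It is the one place where completeness of $X$ is used, in addition to the completeness of $Y$ used through Baire in step~(a); this is consistent with the warning in Remark~\ref{rem=complete}. Everything else is routine bookkeeping of radii.
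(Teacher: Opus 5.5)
Your proposal is correct and follows the same route as the paper, which gives no proof of its own and simply cites Rudin: Theorem~4.3 is the Hahn--Banach corollary supplying $\phi\in X^{\ast}$ with $\|\phi\|_{X^{\ast}}=1$ and $\langle\phi,x\rangle=\|x\|_X$, and Corollary~2.12 is the bounded inverse theorem obtained from the Baire-category proof of the open mapping theorem. These are exactly the arguments you reproduce, and your handling of radii in steps (b)--(c) and the final bound $\|S^{-1}y\|_X\leq (2/\delta)\|y\|_Y$ are sound.
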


\relax The following standard fact is a  corollary to Proposition~\ref{prop=FA}~(2); for the reader's convenience, we include the proof.

\begin{corollary}\label{cor=findim}
Let $X$ and $Y$ be two real Banach spaces. Let $S\colon X\to Y$ be a bounded linear operator. Assume that the quotient $\RR$-linear space $Y/S(X)$ in algebraic sense is finite-dimensional. Then, $S(X)$ is norm-closed in $Y$.
\end{corollary}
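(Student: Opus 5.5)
The plan is to build a bounded operator onto $Y$ from $X$ together with a finite-dimensional complement, and then invoke the open mapping theorem in the form of Proposition~\ref{prop=FA}~(2). Two preliminary reductions come first. Let $K=\Ker(S)$, which is norm-closed in $X$ because $S$ is bounded, so that $X/K$ is a Banach space under the quotient norm. The operator $S$ induces an injective bounded operator $\bar S\colon X/K\to Y$ with $\bar S(X/K)=S(X)$ and $\|\bar S\|_{\op}\leq \|S\|_{\op}$. Hence we may assume from the start that $S$ is injective.

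Since $Y/S(X)$ is finite-dimensional, choose $y_1,\ldots,y_d\in Y$ whose classes form a basis of $Y/S(X)$, and let $F=\mathrm{span}\{y_1,\ldots,y_d\}$. Then, algebraically, $Y=S(X)\oplus F$. Equip $\RR^d$ with the $\ell^1$-norm and consider the Banach space $X\oplus \RR^d$ with the norm $\|x\|_X+\|a\|_1$. Define
\[
T\colon X\oplus\RR^d\to Y;\quad (x,a)\mapsto Sx+\sum_{j=1}^d a_jy_j .
\]
This $T$ is linear and bounded, with $\|T\|_{\op}\leq\max\{\|S\|_{\op},\max_j\|y_j\|_Y\}$. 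It is surjective because $Y=S(X)+F$, and it is injective because the sum is direct and both $S$ and $a\mapsto\sum_j a_jy_j$ are injective. By Proposition~\ref{prop=FA}~(2), $T$ is an isomorphism of Banach spaces, so $T^{-1}$ is bounded.

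The conclusion then follows directly. The subspace $X\oplus\{0\}$ is closed in $X\oplus\RR^d$, and $S(X)=T(X\oplus\{0\})=(T^{-1})^{-1}(X\oplus\{0\})$ is the preimage of a closed set under the continuous map $T^{-1}$. Therefore $S(X)$ is norm-closed in $Y$.

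The only delicate point is the completeness of $X\oplus\RR^d$ and of $X/K$, which is needed to apply the open mapping theorem. Both are standard: a finite direct sum of Banach spaces is complete, and a quotient of a Banach space by a closed subspace is complete. No further obstacle is expected.
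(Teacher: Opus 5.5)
Your proposal is correct and is essentially the paper's own proof: reduce to injective $S$ via $X/\Ker(S)$, adjoin representatives $y_1,\ldots,y_d$ of a basis of $Y/S(X)$ to form the bijective bounded operator $T$ on $X\oplus_1(\RR^d,\|\cdot\|_1)$, apply Proposition~\ref{prop=FA}~(2), and conclude that $S(X)=T(X\oplus\{0\})$ is norm-closed. Your operator-norm bound $\max\{\|S\|_{\op},\max_j\|y_j\|_Y\}$ is valid and slightly sharper than the paper's.
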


\begin{proof}
By replacing $S\colon X\to Y$ by the induced operator $X/\Ker(S)\to Y$, we may assume that $S$ is injective. Let $\ell=\Rdim (Y/S(X))$. Take a complete system $\{y_1,\ldots,y_{\ell}\}$ of representatives for $Y\twoheadrightarrow Y/S(X)$, and define the following linear operator 
\[
T\colon X\oplus_1 (\RR^{\ell},\|\cdot\|_1)\to Y;\quad (x,(t_1,\ldots ,t_{\ell}))\mapsto Sx+t_1y_1+\cdots+t_{\ell}y_{\ell}.
\]
Here $\oplus_1$ denotes the $\ell^1$-direct sum and $\|\cdot\|_1$ means the $\ell^1$-norm on $\RR^{\ell}$. Then, $T$ is bijective and $\|T\|_{\op}\leq \|S\|_{\op}+\max\limits_{1\leq i\leq \ell}\|y_i\|$. By Proposition~\ref{prop=FA}~(2), $T$ is an isomorphism. In particular, $S(X)=T(X\oplus \{(0,\ldots,0)\})$ is norm-closed in $Y$.
\end{proof}

\subsection{Coarse groups and coarse kernels}\label{subsec=coarse}

Here we briefly discuss the theory of coarse groups, which is developed by Leitner and Vigolo \cite{LV}. Coarse groups are defined to be group objects in the category of coarse spaces. However, in this Part I paper, we focus on the case where a coarse group is given by a set-group $\Gg$ with a bi-invariant metric $d_{\Gg}$; the study of general coarse groups will be needed in our Part II paper \cite{KKMMM_partII}. The main topic in this subsection is the \emph{coarse kernel} of a coarse homomorphism.

\begin{definition}
Let $(X,d_{X})$ and $(Y,d_{Y})$ be two metric spaces. Let $f,f'\colon X \to Y$ be maps.
\begin{enumerate}[label=(\arabic*)]
  \item The map $f$ is said to be \emph{bornologous} if for every $R\in \mathbb{R}_{\geq 0}$, there exists $\hat{R}\in \mathbb{R}_{\geq 0}$ such that for all $x_1,x_2\in X$, $d_X(x_1,x_2)\leq R$ implies that $d_Y(f(x_1),f(x_2))\leq \hat{R}$.
  \item We say that $f$ and $f'$ are \emph{close}, written as $f\sim f'$, if there exists $C\in \mathbb{R}_{\geq 0}$ such that for every $x\in X$, $d_X(f(x),f'(x))\leq C$ holds. 
\end{enumerate}
\end{definition}
In the coarse category, a morphism, called a \emph{coarse map}, between two metric spaces is the equivalence class of bornologous maps with respect to $\sim$. Hence, strictly speaking, a coarse map is not a set-map. However, by abuse of notation, we use terminologies for a coarse map to a set-map that represents this coarse map. The concept of a coarse homomorphism is one of such examples. 

Let $(X,d_{X})$ be a metric space. For every $R\in \mathbb{R}_{\geq 0}$, we define a map on the power set $N^{(X,d_X)}_R\colon \mathcal{P}(X)\to \mathcal{P}(X)$
by letting for every $A\in \mathcal{P}(X)$, $N^{(X,d_X)}_R(A)$ to be the $R$-neighborhood of $A$. Namely, we set
\[
N^{(X,d_X)}_R(A)=\{x\in X\;|\; \mathrm{dist}(x,A)\leq R\}.
\]
Here, $\mathrm{dist}$ denotes the distance: for $A,B\subseteq X$, 
\[
\mathrm{dist}(A,B)=\inf\{d_X(a,b)\;|\;a\in A,\,b\in B\},
\]
and $\mathrm{dist}(x,A)$ is defined to be $\mathrm{dist}(\{x\},A)$. Then the closed ball $B^{(X,d_X)}_R(x)$ (recall our notation) equals $N^{(X,d_X)}_R(\{x\})$. By using this map $N^{(X,d_X)}_R$, we can define the asymptotic equivalence relation on $\mathcal{P}(X)$ as follows.

\begin{definition}\label{defn=asymptotic}
Let $(X,d_{X})$ be a metric space, and $A,B\subseteq X$.
\begin{enumerate}[label=(\arabic*)]
  \item We write $A\precsim B$ if there exists $R\in \RR_{\geq 0}$ such that $A\subseteq N^{(X,d_X)}_{R}(B)$. In other words, there exists $R'\in \RR_{\geq 0}$ such that for every $a\in A$, there exists $b\in B$ that satisfies $d_X(a,b)\leq R'$. We also write $B\succsim A$ if $A\precsim B$.
  \item We say that $A$ and $B$ are \emph{asymptotic}, written as $A\asymp B$, if $A\precsim B$ and $A\succsim B$ both hold.
\end{enumerate}
\end{definition}

A \emph{coarse subset} of $X$ is an equivalence class with respect to the asymptotic relation $\asymp$. 
It is straightforward to see that for bornologous maps $f,f'\colon (X,d_X)\to (Y,d_Y)$ and for $A\subseteq X$, if $f\sim f'$, then $f(A)\asymp f'(A)$. Therefore, for a coarse map $\mathbf{f}=[f]=[f]_{\sim}$ and a coarse set $\mathbf{A}=[A]=[A]_{\asymp}$, the coarse image $\mathbf{f}(\mathbf{A})=[f(A)]=[f(A)]_{\asymp}$ is well-defined. Now, we are in position to define a coarse homomorphism between two groups equipped with bi-invariant metrics and a coarse kernel of it.

\begin{definition}[coarse homomorphism]\label{defn=coarsehom}
Let $(\Gg,\cdot_{\Gg},d_{\Gg})$ and $(\Hg,\cdot_{\Hg},d_{\Hg})$ be two groups equipped with bi-invariant metrics.
\begin{enumerate}[label=(\arabic*)]
  \item We say that $f\colon (\Gg,d_{\Gg})\to (\Hg,d_{\Hg})$ is a \emph{coarse homomorphism} if the following two conditions are satisfied.
  \begin{enumerate}[label=(\alph*)]
    \item $f$ is bornologous.
    \item There exists $D\in \RR_{\geq 0}$ such that for all $\gl_1,\gl_2\in \Gg$, we have
\[
d_{\Hg}(f(\gl_1\cdot_{\Gg}\gl_2),f(\gl_1)\cdot_{\Hg}f(\gl_2))\leq D.
\]
  \end{enumerate}
  \item We say that $(\Gg,d_{\Gg})$ and $(\Hg,d_{\Hg})$ are \emph{isomorphic} as coarse groups if there exist two coarse homomorphisms $f\colon (\Gg,d_{\Gg})\to (\Hg,d_{\Hg})$ and $f'\colon (\Hg,d_{\Hg})\to (\Gg,d_{\Gg})$ such that $f'\circ f\sim \mathrm{id}_{\Gg}$ and $f\circ f'\sim \mathrm{id}_{\Hg}$.
\end{enumerate}
\end{definition}

\begin{definition}[coarse kernel]\label{defn=coarsekernel}
Let $(\Gg,d_{\Gg})$ and $(\Hg,d_{\Hg})$ be two groups equipped with bi-invariant metrics. Let $f\colon (\Gg,d_{\Gg})\to (\Hg,d_{\Hg})$ be a coarse homomorphism. Then, we say that $A\subseteq \Gg$ is a \emph{coarse kernel} of $f$ if the following two conditions are both satisfied.
  \begin{enumerate}[label=(\alph*)]
    \item The set $f(A)$ is bounded in $d_{\Hg}$. In other words, there exists $R\in \RR_{\geq 0}$ such that $f(A)\subseteq N^{(\Hg,d_{\Hg})}_R(\{e_{\Hg}\})  (=B^{(\Hg,d_{\Hg})}_R(e_{\Hg})). $
    \item The set $A$ is maximal with respect to $\precsim$ among all $B\subseteq  \Gg$ such that $f(B)$ is bounded in $d_{\Hg}$. That is, for every $B\subseteq  \Gg$ such that $f(B)$ is bounded in $d_{\Hg}$, we have $B\precsim A$.
  \end{enumerate}
\end{definition}

The kernel of a genuine group homomorphism $f\colon \Gg\to \Hg$ is the maximal subset $A$ of $\Gg$, with respect to $\subseteq$, that satisfies $f(A)=\{e_{\Hg}\}$; hence, Definition~\ref{defn=coarsekernel} `coarsifies' the concept of the kernel. Strictly speaking, a coarse homomorphism should be defined for a coarse map $\mathbf{f}=[f]$ and that of `the' coarse kernel should be defined to be a coarse subset $\mathbf{A}=[A]$. Whereas a set-theoretic coarse kernel may not be unique in general, a coarse kernel $\mathbf{A}=[A]$, as an asymptotic class, of a coarse homomorphism $f$ is uniquely determined by definition, provided that it exists. In addition, the coarse kernel $\mathbf{A}$ does not depend on a representative $f$ of $\mathbf{f}=[f]$. 

We warn that a coarse kernel of a coarse homomorphism may \emph{not} exist  in general; compare with Proposition~\ref{prop=CKBanach} below. If a coarse kernel exists, then we have the coarse isomorphism theorem; see \cite[Chapter~7]{LV} for details.

\begin{example}\label{exa=coarsegroup}
Let $(X,\|\cdot\|_X)$ be a real normed  space. View $X$  as an additive group $(X,+)$. We note $\|\cdot\|_X$  determines a bi-invariant metric on $X$. Thus, we can regard $(X,\|\cdot\|_X)$ as a coarse group. Let $(Y,\|\cdot\|_Y)$ be also a real normed  space, and $T\colon (X,\|\cdot\|_X)\to (Y,\|\cdot\|_Y)$  be a bounded linear operator. Then $T$ is in particular an additive group homomorphism, and $T$ is bornologous because $\|T\|_{\mathrm{op}}<\infty$. Hence, $T$ may be regarded as a coarse homomorphism.
\end{example}

\begin{remark}\label{rem=coarsekernel}
 We note that a coarse kernel in Definition \ref{defn=coarsekernel} may not be closed under the original group multiplication.  Here we explain that nevertheless, we can equip this with a structure of a coarse group. 
Let $f\colon (\Gg,d_{\Gg})\to (\Hg,d_{\Hg})$ be a coarse homomorphism between two groups with bi-invariant metrics. Assume that a coarse kernel $A$ of $f$ exists. By replacing $A$ with $A\cup\{e_{\Gg}\}$, we may assume that $e_{\Gg}\in A$. Since $f(A)$ is bounded in $(\Hg,d_{\Hg})$, there exists $R\in \RR_{\geq 0}$ such that $f(A)\subseteq B^{(\Hg,d_{\Hg})}_R(e_{\Hg})$. Since $A$ is a coarse kernel, we have $f^{-1}(B^{(\Hg,d_{\Hg})}_{2R}(e_{\Hg}))\precsim A$.
In other words, there exist $D\in \RR_{\geq 0}$ and a map $\Phi\colon f^{-1}(B^{(\Hg,d_{\Hg})}_{2R}(e_{\Hg}))\to A$ such that for every $x\in f^{-1}(B^{(\Hg,d_{\Hg})}_{2R}(e_{\Hg}))$, we have $d_{\Gg}(\Phi(x),x)\leq D$. We can choose this map $\Phi$ to satisfy $\Phi(a)=a$ for every $a\in A$. Thus, we can define the $\star_{\Phi}\colon A\times A\to A$; $a\star_{\Phi}b=\Phi(a\cdot_{\Gg}b)$. By construction, for all $a,b\in A$ we have
\begin{equation}\label{eq=coarsehomA}
d_{\Gg}(a\star_{\Phi}b,a\cdot_{\Gg}b)\leq D.
\end{equation}
Additionally, $a\star b=a\cdot_{\Gg}b$ if $a\cdot_{\Gg}b\in A$.
Thus, by  equipping $A$ with the `coarse group multiplication' $\star_{\Phi}$, we may regard $(A,e_{\Gg},\star_{\Phi},d_{\Gg})$ as a coarse group. Then, we may view this as a coarse subgroup of $(\Gg,e_{\Gg},\cdot_{\Gg},d_{\Gg})$. Here, we do not explain the precise meaning of these assertions; we refer the reader to \cite{LV} instead. At the levels of $\mathbf{f}=[f]$ and $\mathbf{A}=[A]$, the coarse group structure (up to coarse isomorphisms) of $\mathbf{A}$ is independent of the choices of representatives $f$ and  $A$, or of the choice of $\Phi$.
\end{remark}

The following  theorem  treats the case of Example~\ref{exa=coarsegroup} when $X$ and $Y$ are both complete. In the present paper, we only treat  the special case  where coarse groups are set-groups equipped with bi-invariant metrics. Hence, we include  the proof of this  theorem  within our setting for the convenience of the reader.

\begin{theorem}[{\cite[Theorem 12.2.5]{LV}}]\label{prop=CKBanach}
Let $(X,\|\cdot\|_X)$ and $(Y,\|\cdot\|_Y)$ be two real Banach spaces and $S\colon X\to Y$ a bounded linear operator. Then the following hold true.
\begin{enumerate}[label=\textup{(\arabic*)}]
  \item A coarse kernel of $S$ exists if and only if $S(X)$ is norm-closed in $Y$. 
  \item If $S(X)$ is norm-closed in $Y$, then $\Ker (S)$ is a coarse kernel of $S$.
\end{enumerate}
\end{theorem}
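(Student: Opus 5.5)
The plan is to prove (2) first and then obtain (1) by combining (2) with a converse argument. Throughout, the coarse group structures are the ones induced by the norms, as in Example~\ref{exa=coarsegroup}. The sets in question are the sets $A\subseteq X$ with $S(A)$ bounded.

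\emph{Proof of (2).} Assume $S(X)$ is norm-closed. Then $S(X)$ is a Banach space, and the induced operator $\bar S\colon X/\Ker(S)\to S(X)$ is a bounded bijection between Banach spaces; here $\Ker(S)$ is closed, so $X/\Ker(S)$ is Banach. By Proposition~\ref{prop=FA}~(2), $\bar S^{-1}$ is bounded, say with operator norm $K$.
\begin{itemize}
\item Condition (a) of Definition~\ref{defn=coarsekernel} is immediate, since $S(\Ker S)=\{0\}$.
\item For (b), let $B\subseteq X$ with $S(B)\subseteq B^{(Y,\|\cdot\|_Y)}_R(0)$. For $b\in B$ we get $\|[b]\|_{X/\Ker(S)}\le K\|Sb\|_Y\le KR$. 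By the definition of the quotient norm there is $u\in\Ker(S)$ with $\|b-u\|_X\le KR+1$.
\end{itemize}
Hence $B\subseteq N_{KR+1}(\Ker S)$, that is, $B\precsim \Ker(S)$. This proves (2), and with it the ``if'' direction of (1).

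\emph{``Only if'' in (1).} Suppose a coarse kernel $A$ exists, with $S(A)\subseteq B_R(0)$. I will show that $S(X)$ is closed via the open-mapping-type estimate below and a standard completeness argument.

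\emph{Claim.} There is $r>0$ such that every $y\in S(X)$ with $\|y\|_Y\le 1$ has a preimage of norm at most $r$.

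To prove the claim, fix $n\in\NN$. The set $B_n=S^{-1}(B^{(Y,\|\cdot\|_Y)}_{n}(0))$ has bounded image, so by maximality of $A$ there is $R_n$ with $B_n\subseteq N_{R_n}(A)$. Linearity then does the work. The set $S^{-1}(B_1(0))$ is a cone-like piece: scaling it by $n$ gives $nS^{-1}(B_1(0))=B_n$. Now every $a\in A$ is close to $\Ker S$ at some uniform scale. Indeed, $A$ itself has bounded image, so $\Ker(S)+A\precsim A$; also $A\precsim \Ker(S)+\text{bounded}$ fails in general, so I argue directly.

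Let $x\in S^{-1}(B_1(0))$. Then $nx\in B_n\subseteq N_{R_n}(A)$ for all $n$. Since $A\subseteq B_{R}$-preimage $=B_R$, the set $B_n$ for $n\ge R$ contains $A$, so $N_{R_n}(A)$ is controlled by a single constant once $n\ge R$. Explicitly, $B_n\precsim A\subseteq B_{\lceil R\rceil}$, so $B_n\subseteq N_{C}(B_{\lceil R\rceil})$ for each $n$, with $C$ depending on $n$.

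Now take $n=2\lceil R\rceil$. Every $x\in B_{2\lceil R\rceil}$ is within $C$ of some $x'\in B_{\lceil R\rceil}$. Rescaling by $1/2$: every $x\in B_{\lceil R\rceil}$ is within $C/2$ of a point of $B_{\lceil R\rceil/2}$. Iterating this halving (a successive-approximation argument as in the proof of the open mapping theorem) and using completeness of $X$, each $x\in B_{\lceil R\rceil}$ is within $C(1/2+1/4+\cdots)\le C$ of $\Ker(S)$. Hence
\[
\|[x]\|_{X/\Ker S}\le C' \quad\text{whenever}\quad \|Sx\|_Y\le 1,
\]
which is the claim.

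\emph{Closedness of $S(X)$.} Given the claim, let $y\in\overline{S(X)}$ and pick $y_k\in S(X)$ with $\|y_{k+1}-y_k\|_Y\le 2^{-k}$. Choose preimages of the differences with norms at most $r2^{-k}$. Summing these in $X$, which converges by completeness, produces $x$ with $Sx=y$. So $S(X)$ is closed.

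The main obstacle is the claim, specifically the halving iteration. It requires that the neighbourhood constant $C$ scales linearly under the rescaling $x\mapsto x/2$. This holds because $N_C(B_m)/2=N_{C/2}(B_{m/2})$ by linearity of $S$, and that identity is what I would check carefully.
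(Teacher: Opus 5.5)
Your proof is correct. For the ``only if'' direction it takes a genuinely different route from the paper; the ``if'' direction and (2) are the same bounded-inverse argument the paper uses.

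\textbf{How the paper argues.} The paper first shows that $\Ker(S)$ is itself a coarse kernel.
\begin{enumerate}
\item It enlarges the given kernel to the closed, star-shaped set $A=S^{-1}(B^{(Y,\|\cdot\|_Y)}_R(0))$.
\item It equips $A$ with the coarse multiplication $\star$ of Remark~\ref{rem=coarsekernel}. This amounts to applying maximality to $S^{-1}(B^{(Y,\|\cdot\|_Y)}_{2R}(0))$.
\item It homogenizes: with $a_{n+1}=a_n\star a_n$, the limit $g(a)=\lim a_n/2^n$ satisfies $\|a-g(a)\|\le D$ and $2^n g(a)\in A$ for all $n$. This forces $g(a)\in\Ker(S)$.
\end{enumerate}
A non-closed range would then give points $x_n$ with $\|Sx_n\|_Y\le 1$ and $\mathrm{dist}(x_n,\Ker S)\ge n$, contradicting that $\Ker(S)$ is a coarse kernel.

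\textbf{How you argue.} You use maximality at the same scale, in the form $S^{-1}(B_{2m}(0))\subseteq N_C(S^{-1}(B_m(0)))$. You then exploit homogeneity of the preimage balls under scaling to run a halving, open-mapping-style iteration. This converges directly into $\Ker(S)$ and yields the bounded-inverse estimate on $S(X)$; closedness follows by the usual series argument.

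\textbf{Comparison.} Your route is more elementary. It needs neither the coarse group structure on the kernel nor closedness or star-shapedness of $A$. It also makes explicit the ``standard argument using a Cauchy sequence'' that the paper leaves implicit. The paper's route is more intrinsically coarse-geometric: it homogenizes the coarse kernel, much like homogenizing a quasimorphism. It also identifies $\Ker(S)$ as the coarse kernel before any closedness is discussed. Your argument recovers this too, by combining the closedness you prove with (2).

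\textbf{Small repairs.}
\begin{itemize}
\item Take $m\ge\max\{R,1\}$ rather than $m=\lceil R\rceil$. If $R=0$, then $S^{-1}(B_0(0))=\Ker(S)$ and your iteration gives no information.
\item Replace $C$ by $C+1$ when choosing approximants. Membership in $N_C(\cdot)$ only bounds the distance, which is an infimum and need not be attained.
\item The side remarks about $\Ker(S)+A\precsim A$ and ``$A\precsim\Ker(S)+$bounded fails in general'' are unnecessary. The latter is misleading, since $A\precsim\Ker(S)$ is exactly what your argument ends up proving.
\item Use separate symbols for balls in $Y$ and their preimages in $X$; writing $B_n$ for both makes the key inclusion harder to read.
\end{itemize}
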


\begin{proof}
Let $\|\cdot\|_S$ be the quotient norm on $X/\Ker(S)$. Then, $S$ descends to  an injective bounded  operator $\check{S}\colon (X/\Ker(S),\|\cdot\|_S)\to (Y,\|\cdot\|_Y)$. Set $T\colon (S(X),\|\cdot\|_Y)\to (X/\Ker(S),\|\cdot\|_S)$ as the linear operator uniquely determined by the condition  $T\circ \check{S}=\mathrm{Id}_{X/\Ker(S)}$. Then, by  Proposition~\ref{prop=FA}~(2)  together with a standard argument using a Cauchy sequence, $S(X)$ is norm-closed in $Y$ if and only if $\|T\|_{\mathrm{op}}<\infty$.

First, assume that $S(X)$ is norm-closed in $Y$. Set $C=\|T\|_{\mathrm{op}}$, which is finite. Set $A=\Ker(S)$. Then $S(A)=\{0\}$ is bounded. Let $B\subseteq (X,\|\cdot\|_X)$ be a set whose image $S(B)$ is bounded. Take $R\in \RR_{\geq 0}$ such that $S(B)\subseteq B_{(Y,\|\cdot\|_Y)}(0,R)$. Then, for every $b\in B$ we have
\[
\mathrm{dist}(b,A)=\|[b]\|_S=\|(T\circ \check{S})[b]\|_S\leq C \|\check{S}[b]\|_S\leq CR.
\]
Hence, $B\precsim A$. This proves that $A=\Ker(S)$ is a coarse kernel of $S$.  This completes the proofs of ``if'' part of (1) and (2). 

 To see the ``only if'' part of (1),  assume that a coarse kernel of $S$ exists. We will first show that $\Ker(S)$ is a coarse kernel of $S$. To show this, it suffices to prove that $A\precsim \Ker(S)$ for some (or, equivalently every) coarse kernel $A$ of $S$. Let $A_0$ be a coarse kernel of $S$. Then, there exists $R\in \RR_{\geq 0}$ such that $A_0\subseteq S^{-1}(B^{(Y,\|\cdot\|_Y)}_R(0))$. Set $A=S^{-1}(B^{(Y,\|\cdot\|_Y)}_R(0))$. Then, $A$ is also a coarse kernel of $S$; $A$ is norm-closed, and $ta\in A$ holds whenever $a\in A$ and $t\in [0,1]$. Now, as we have discussed in Remark~\ref{rem=coarsekernel}, $A$ admits a coarse group multiplication $\star\colon A\times A\to A$. More precisely, there exists $D\in \RR_{\geq 0}$ such that for all $a,b\in A$,
\begin{equation}\label{eq=neighbor}
\|a\star b-a-b\|\leq D
\end{equation}
holds; compare with \eqref{eq=coarsehomA}. Fix $a\in A$. For each $n\in \ZZ_{\geq 0}$, we set $a_0=a$ and $a_{n+1}=a_n\star a_n$. Then we have a sequence $(a_n)_{n\in \ZZ_{\geq 0}}$ in $A$. By \eqref{eq=neighbor}, for every $n\in \ZZ_{\geq 0}$ we have $\|a_{n+1}/2^{n+1}-a_n/2^n\|\leq (1/2^{n+1})D$. Hence, $(a_n/2^n)_{n\in \ZZ_{\geq 0}}$ is a Cauchy sequence in $A$. By completeness of $(A,\|\cdot\|_X)$, the (norm-)limit $\lim\limits_{n\to\infty}(a_n/2^n)$ exists in $A$. We write $g(a)$ for the limit, associated to the fixed $a\in A$. Then, by construction, for every $a\in A$ we have $\|a-g(a)\|\leq D$ and $g(a\star a)=2g(a)$. The latter equality above implies that $2^ng(a)\in A$ for every $a\in A$ and every $n\in \ZZ_{\geq 0}$. Since $A=S^{-1}(B^{(Y,\|\cdot\|_Y)}_R(0))$, this implies that $g(A)\subseteq \Ker(S)$. Therefore, we conclude that $A\subseteq N^{(X,\|\cdot\|_X)}_D(\Ker(S))$. Hence $A\precsim \Ker(S)$, as desired. Now, suppose that $S(X)$ is not norm-closed in $Y$. Then, $\|T\|_{\mathrm{op}}=\infty$, and there exists a sequence $(x_n)_{n\in \NN}$ in $X$ such that for every $n\in \NN$,
\[
\mathrm{dist}(x_n,\Ker(S))(=\|[x_n]\|_S)\geq n \quad \mathrm{and}\quad x_n\in S^{-1}(B^{(Y,\|\cdot\|_Y)}_1(0)).
\]
This is a contradiction, because we have argued that $\Ker(S)$ is a coarse kernel of $S$. Therefore, $S(X)$ must be norm-closed in $Y$ if a coarse kernel of $S$ exists. 
\end{proof}

\subsection{Bounded cohomology}\label{subsec=H_b}
Let $\Gg$ be a group. Let $n \in \mathbb{Z}$. For $n \ge 0$, let $\CCC^n(\Gg)$ be the $\RR$-linear space of real-valued functions on the $n$-fold direct product $\Gg^n$ of $\Gg$. For $n < 0$, set $\CCC^n(\Gg) = 0$. Define the coboundary map $\delta \colon \CCC^n(\Gg) \to \CCC^{n+1}(\Gg)$ by
\begin{align*}
& \delta c(g_1, \cdots, g_{n+1}) \\
={} & c(g_2, \cdots, g_{n+1}) + \sum_{i=1}^n (-1)^i c(g_1, \cdots, g_{i} g_{i+1}, \cdots, g_{n+1}) + (-1)^{n+1} c(g_1, \cdots, g_{n})
\end{align*}
for $n > 0$, and set $\delta = 0$ for $n \le 0$. The \emph{$n$-th group cohomology $\HHH^n(\Gg)$ of $\Gg$ with trivial $\RR$-coefficients} is the $n$-th cohomology group of the cochain complex $(\CCC^{\bullet}(\Gg), \delta)$. In particular, $\HHH^1(\Gg)$ can be naturally identified with the $\RR$-linear space $\mathrm{Hom}(\Gg,\RR)$ consisting of group homomorphisms from $\Gg$ to $\RR$.

Let $\CCC^n_b(\Gg)$ be the $\RR$-linear space consisting of bounded functions on $\Gg^n$ for $n \ge 0$, and set $\CCC^n_b(\Gg)=0$ for $n<0$. Then $\CCC^\bullet_b(\Gg)$ is a subcomplex of $\CCC^\bullet(\Gg)$, and define $\HHH^{\bullet}_b(\Gg)$ to be the cohomology of $\CCC^{\bullet}_b(\Gg)$. This cohomology group is called the \emph{bounded cohomology of $\Gg$ with trivial $\RR$-coefficients}. The map $c_G^\bullet \colon \HHH^\bullet_b(\Gg) \to \HHH^\bullet(\Gg)$ induced by the inclusion $\CCC^\bullet_b(\Gg) \hookrightarrow \CCC^\bullet(\Gg)$ is called the \emph{comparison map}.

We will  employ the following four facts in Section~\ref{sec=closedrange}. 
\begin{proposition}[{see \cite[Theorem~2.50]{Calegari}}]\label{prop_Q_exact}
The coboundary map $\delta\colon \CCC^1(\Gg)\to \CCC^2(\Gg)$ induces an isomorphism between  the quotient space $\QQQ(\Gg)/\HHH^1(\Gg)$ and the kernel of $c_{\Gg}=c^2_{\Gg} \colon \HHH^2_b(\Gg)\to \HHH^2(\Gg)$. Namely, the following sequence is exact:
\[ 0 \to \HHH^1(\Gg) \to \QQQ(\Gg) \xrightarrow{\delta} \HHH^2_b(\Gg) \xrightarrow{c_{\Gg}} \HHH^2(\Gg).\]
\end{proposition}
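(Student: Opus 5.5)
The plan is to build the exact sequence directly from the definitions, checking exactness at each of the four spots; only the surjectivity onto $\Ker(c_{\Gg})$ needs real work. For the map $\QQQ(\Gg)\to \HHH^2_b(\Gg)$: if $\phf\in\QQQ(\Gg)$, then $\delta\phf(\gl_1,\gl_2)=\phf(\gl_2)-\phf(\gl_1\gl_2)+\phf(\gl_1)$ is bounded by $\DD(\phf)$. It is also a cocycle, since $\delta\delta=0$ in $\CCC^\bullet(\Gg)$. So $[\delta\phf]\in\HHH^2_b(\Gg)$ is well defined. Its image under $c_{\Gg}$ is the class of a coboundary in $\CCC^\bullet(\Gg)$, hence zero. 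This gives exactness at $\HHH^2_b(\Gg)$ in the direction $\mathrm{Im}\subseteq\Ker$. Exactness at $\HHH^1(\Gg)$ is the trivial inclusion $\mathrm{Hom}(\Gg,\RR)\subseteq\QQQ(\Gg)$.

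For exactness at $\QQQ(\Gg)$, suppose $[\delta\phf]=0$ in $\HHH^2_b(\Gg)$. Then $\delta\phf=\delta b$ for some bounded $b\in\CCC^1_b(\Gg)$, so $\phf-b$ is a homomorphism. Since both $\phf$ and any homomorphism are homogeneous, homogenizing $b=\phf-(\phf-b)$ gives
\[
\lim_{n\to\infty}\frac{b(\gl^n)}{n}=0=\phf(\gl)-(\phf-b)(\gl).
\]
Hence $\phf=\phf-b$ is a homomorphism. Conversely, homomorphisms map to $0$ because $\delta$ kills them.

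For exactness at $\HHH^2_b(\Gg)$ (the main step), take a bounded $2$-cocycle $\omega$ with $c_{\Gg}[\omega]=0$. Then $\omega=\delta f$ for some $f\in\CCC^1(\Gg)$. Since $\delta f=\omega$ is bounded, $f$ is a (possibly inhomogeneous) quasimorphism with $\DD(f)\leq\|\omega\|_\infty$. I would then invoke the standard homogenization $\bar f(\gl)=\lim_{n}f(\gl^n)/n$. The limit exists by subadditivity-type estimates (Fekete's lemma applied to $f(\gl^n)+\DD(f)$), $\bar f$ is homogeneous, and $|\bar f-f|\leq\DD(f)$. A standard argument shows $\DD(\bar f)\leq 2\DD(f)<\infty$, so $\bar f\in\QQQ(\Gg)$. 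Then $\delta\bar f-\omega=\delta(\bar f-f)$ with $\bar f-f$ bounded, so $[\delta\bar f]=[\omega]$ in $\HHH^2_b(\Gg)$.

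The main obstacle is verifying that the homogenization is again a quasimorphism with finite defect. The estimate $|\bar f-f|\leq\DD(f)$ is immediate from the telescoping bound $|f(\gl^n)-nf(\gl)|\leq(n-1)\DD(f)$, and it already yields $\DD(\bar f)\leq 3\DD(f)$, which suffices here. Finally, the induced map $\QQQ(\Gg)/\HHH^1(\Gg)\to\Ker(c_{\Gg})$ is an isomorphism: injectivity comes from exactness at $\QQQ(\Gg)$, and surjectivity from the step above.
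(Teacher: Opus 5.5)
Your proposal is correct: it is the standard homogenization argument behind \cite[Theorem~2.50]{Calegari}, which the paper cites rather than proving itself. One small slip is that the crude estimate from $|\bar f-f|\leq \DD(f)$ gives $\DD(\bar f)\leq \DD(f)+3\DD(f)=4\DD(f)$, not $3\DD(f)$; as you say, any finite bound suffices.
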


\begin{theorem}[{\cite[Example~12.4.3]{Monodbook}}] \label{theorem_Monod_sequence}
Let
\[ 1 \to \Ng \xrightarrow{i} \Gg \xrightarrow{\ppi} \Gamma \to 1 \]
be a short exact sequence of groups. Then there  exists  an exact sequence
\[ 0 \to \HHH^2_b(\Gamma) \xrightarrow{\ppi^*} \HHH^2_b(\Gg) \xrightarrow{i^*} \HHH^2_b(\Ng)^{\Gg} \to \HHH^3_b(\Gamma).\]
\end{theorem}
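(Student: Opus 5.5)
The plan is to derive the sequence from the Hochschild--Serre type spectral sequence in continuous bounded cohomology developed by Monod. The reason this should work is the familiar low-degree phenomenon: the first bounded cohomology of any group vanishes, so the row $q=1$ of the spectral sequence disappears. That leaves a long exact sequence in low degrees, analogous to the inflation--restriction sequence.

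\textbf{Step 1 (the spectral sequence).} Compute $\HHH^\bullet_b(\Gg)$ via the standard resolution $\ell^\infty(\Gg^{\bullet+1})$, and take invariants in two stages: first under $\Ng$, then under $\Gam=\Gg/\Ng$. The $\Ng$-invariants $\ell^\infty(\Gg^{n+1})^{\Ng}$ form a relatively injective resolution of the trivial $\Ng$-module. Hence their cohomology is $\HHH^\bullet_b(\Ng)$, carrying the $\Gam$-action induced by conjugation in $\Gg$. Filtering the resulting double complex yields a spectral sequence converging to $\HHH^\bullet_b(\Gg)$. Its $E_2$-term should be
\[
E_2^{p,q}=\HHH^p_b(\Gam;\HHH^q_b(\Ng)),
\]
valid at least in the range where the coefficient modules $\HHH^q_b(\Ng)$ are Hausdorff, that is, Banach. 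I would state this following Monod's book rather than reprove the homological algebra.

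\textbf{Step 2 (vanishing and Hausdorffness in low degrees).} Three inputs are needed.
\begin{enumerate}[label=(\alph*)]
  \item $\HHH^1_b(\Ng)=0$, since a bounded homomorphism $\Ng\to\RR$ is zero. Hence $E_2^{p,1}=0$ for all $p$.
  \item $\HHH^0_b(\Ng)=\RR$ with trivial action. Hence $E_2^{p,0}=\HHH^p_b(\Gam)$.
  \item $\HHH^2_b(\Ng)$ is a Banach space, by the classical result that the seminorm on second bounded cohomology is a norm (Matsumoto--Morita, Ivanov). Hence $E_2^{0,2}=\HHH^2_b(\Ng)^{\Gam}=\HHH^2_b(\Ng)^{\Gg}$, and the identification of $E_2$ is legitimate in total degree at most $3$ where it is used.
\end{enumerate}

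\textbf{Step 3 (reading off the sequence).} Because the row $q=1$ vanishes, the only possibly nonzero differential affecting $E^{2,0}$, $E^{0,2}$ and $E^{3,0}$ in low degrees is the transgression $d_3\colon E_3^{0,2}\to E_3^{3,0}$. Note that $E_3^{0,2}=E_2^{0,2}$ and $E_3^{3,0}=E_2^{3,0}$. A standard argument with the filtration of $\HHH^2_b(\Gg)$ then produces
\[
0\to \HHH^2_b(\Gam)\xrightarrow{\ppi^*}\HHH^2_b(\Gg)\xrightarrow{i^*}\HHH^2_b(\Ng)^{\Gg}\xrightarrow{d_3}\HHH^3_b(\Gam).
\]
The edge maps need to be identified with $\ppi^*$ and the restriction $i^*$. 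This follows by checking them on the cochain level with the above resolutions.

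\textbf{Main obstacle.} The delicate point is Step 1 in the bounded setting. The relevant cohomology groups can be non-Hausdorff, and the Banach-module homological algebra (relative injectivity of the $\Ng$-invariant complex as $\Gam$-modules, and the identification of $E_2$) requires care. This is precisely where Step 2(c) is essential. As a sanity check, or as an alternative route, I would verify injectivity of $\ppi^*$ directly on cocycles. If $\ppi^*\alpha=\delta f$ with $f$ bounded, then $\delta f$ is constant along $\Ng$-cosets. From this one deduces that $f$ differs from an $\Ng$-bi-invariant bounded function by a bounded homomorphism on $\Ng$, and that homomorphism must be $0$. It follows that $\alpha$ is a bounded coboundary on $\Gam$.
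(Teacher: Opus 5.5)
Your proposal is correct and is essentially the standard argument behind the cited result: the paper gives no proof of its own and simply quotes Monod, who obtains this sequence from his Hochschild--Serre spectral sequence in bounded cohomology as you do, killing the row $q=1$ via $\HHH^1_b(\Ng)=0$ and identifying $E_2^{p,q}$ for $q\le 2$ using that $\HHH^2_b(\Ng)$ is Hausdorff. One wording slip in Step~1: it is $\ell^\infty(\Gg^{\bullet+1})$ itself, viewed as an $\Ng$-module, that is a relatively injective strong resolution of $\RR$, and it is the complex of its $\Ng$-invariants that computes $\HHH^\bullet_b(\Ng)$.
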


We equip $\HHH^n_b(G)$ with the seminorm $\|\cdot\|$ induced by the $\ell^{\infty}$-norm on $\CCC^n_b(G)$.

\begin{theorem}[{see \cite[Corollary~6.7]{Frigerio}}] \label{H2b_Banach}
For a group $\Gg$, $(\HHH^2_b(\Gg),\|\cdot\|)$ is a Banach space.
\end{theorem}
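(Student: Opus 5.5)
The plan is to present $\HHH^2_b(\Gg)$ as a quotient of a Banach space by a closed subspace. Let $\ZZZ^2_b(\Gg)\subseteq \CCC^2_b(\Gg)$ be the bounded $2$-cocycles and $\BBB^2_b(\Gg)=\delta\CCC^1_b(\Gg)$ the bounded coboundaries of bounded $1$-cochains. Then $\HHH^2_b(\Gg)=\ZZZ^2_b(\Gg)/\BBB^2_b(\Gg)$, and the seminorm $\|\cdot\|$ is exactly the quotient seminorm of the $\ell^\infty$-norm. The proof has three steps.

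\emph{Step 1: $\ZZZ^2_b(\Gg)$ is a Banach space.} The space $\CCC^2_b(\Gg)=\ell^\infty(\Gg^2)$ is a Banach space. Each coordinate of $\delta$ is a finite linear combination of evaluations, and convergence in $\ell^\infty$ implies pointwise convergence. Hence the cocycle condition passes to limits, $\ZZZ^2_b(\Gg)$ is norm-closed, and it is therefore Banach. Once we know $\BBB^2_b(\Gg)$ is norm-closed, the quotient norm is a genuine norm and $\HHH^2_b(\Gg)$ is complete, by the standard fact that a Banach space modulo a closed subspace is Banach (cf.\ the discussion of quotient seminorms in Subsection~\ref{subsec=FA}). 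So everything reduces to the closedness of $\BBB^2_b(\Gg)$.

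\emph{Step 2: $\delta$ is bounded below on $\CCC^1_b(\Gg)$.} This is the key estimate: for every bounded $f\colon\Gg\to\RR$ we have
\[
\|f\|_\infty\leq \|\delta f\|_\infty=\DD(f).
\]
To prove it, note that a bounded $f$ is a quasimorphism with $\DD(f)=\|\delta f\|_\infty$. Its homogenization $\bar f(\gl)=\lim_{k\to\infty} f(\gl^k)/k$ vanishes identically, because $f$ is bounded. The standard estimate $|f(\gl)-\bar f(\gl)|\leq \DD(f)$ then gives $|f(\gl)|\leq \DD(f)$ for every $\gl\in\Gg$. I would prove that estimate via $f(\gl^{2^n})\sim_{(2^n-1)\DD(f)} 2^n f(\gl)$, obtained by iterating $f(\gl^{2m})\sim_{\DD(f)}2f(\gl^m)$, then dividing by $2^n$ and letting $n\to\infty$. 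I expect this step to be the main point of the proof; everything else is routine.

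\emph{Step 3: $\BBB^2_b(\Gg)$ is closed.} Let $\delta f_n\to \omega$ in $\ell^\infty$ with $f_n\in\CCC^1_b(\Gg)$. Step~2 applied to $f_n-f_m$ gives $\|f_n-f_m\|_\infty\leq\|\delta f_n-\delta f_m\|_\infty$, so $(f_n)_n$ is Cauchy in $\ell^\infty(\Gg)$ and converges to some bounded $f$. Since $\delta$ is bounded (with operator norm at most $3$), we get $\delta f=\lim_n\delta f_n=\omega$, so $\omega\in\BBB^2_b(\Gg)$. Combining this with Step~1 completes the proof.
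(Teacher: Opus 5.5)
Your proof is correct and complete. $\ZZZ^2_b(\Gg)$ is closed in $\ell^\infty(\Gg^2)$, and for bounded $f$ the estimate $\|f\|_\infty\le \DD(f)=\|\delta f\|_\infty$ makes $\delta$ bounded below on $\CCC^1_b(\Gg)$; here you only need $f(\gl^{2^n})/2^n\to 0$, not the full homogenization. Hence the bounded coboundaries form a closed subspace and the quotient is Banach. The paper gives no argument of its own for this statement and simply cites Frigerio's Corollary~6.7; your argument is the standard proof of this fact, so there is nothing to add.
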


\begin{lemma}[{see \cite[Lemma~2.58]{Calegari}}] \label{lem=eqnorms}
For each $\phi \in \QQQ(\Gg)$, we have
\[ \| [\delta \phi]\| \le \DD_\Gg(\phi) \le 2 \cdot \| [\delta \phi]\|.\]
\end{lemma}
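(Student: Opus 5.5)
The plan is to compare $\DD_\Gg(\phi)$ with the infimum of $\ell^\infty$-norms over the cohomology class. The seminorm on $\HHH^2_b(\Gg)$ is the quotient seminorm
\[
\|[\delta\phi]\|=\inf\{\|\delta\phi+\delta b\|_\infty \;|\; b\in \CCC^1_b(\Gg)\},
\]
since the coboundaries in $\CCC^2_b(\Gg)$ are exactly $\delta\CCC^1_b(\Gg)$. Here $[\delta\phi]$ makes sense because $\delta\phi$ is bounded for a quasimorphism $\phi$.

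\textbf{Lower bound.} Unwinding the formula for $\delta$ gives
\[
\delta\phi(g_1,g_2)=\phi(g_2)-\phi(g_1g_2)+\phi(g_1),
\]
so $\|\delta\phi\|_\infty=\DD_\Gg(\phi)$. Taking $b=0$ in the infimum gives $\|[\delta\phi]\|\le \DD_\Gg(\phi)$.

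\textbf{Upper bound.} Fix $b\in\CCC^1_b(\Gg)$ and set $\psi=\phi+b$. Then $\psi$ is a (not necessarily homogeneous) quasimorphism with $\DD_\Gg(\psi)=\|\delta\phi+\delta b\|_\infty$. Because $\phi$ is homogeneous and $b$ is bounded,
\[
\lim_{n\to\infty}\frac{\psi(g^n)}{n}=\phi(g)\quad\text{for every } g\in\Gg,
\]
so $\phi$ is the homogenization of $\psi$. The claim then reduces to the classical estimate that the homogenization $\bar\psi$ of any quasimorphism $\psi$ satisfies $\DD_\Gg(\bar\psi)\le 2\DD_\Gg(\psi)$. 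Applying this and taking the infimum over $b$ gives $\DD_\Gg(\phi)\le 2\|[\delta\phi]\|$.

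\textbf{Main obstacle: the sharp factor $2$.} Crude estimates for the homogenization, such as comparing $\psi((gh)^n)$ with $\psi(g^n)+\psi(h^n)$ by naive rearrangement, lose too much and only give a factor like $3$ or worse. I would argue in two steps.

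First, show that a homogeneous quasimorphism $\phi$ satisfies
\[
\DD_\Gg(\phi)=\sup_{a,b\in\Gg}|\phi([a,b])|.
\]
The inequality $\ge$ is immediate from homogeneity and conjugation invariance (Lemma~\ref{lem=quasiinv}). The inequality $\le$ uses that $(gh)^n$ and $g^nh^n$ differ by a product of about $n/2$ commutators, which gives a linear bound in $n$.

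Second, bound $|\phi([a,b])|$. Write
\[
\phi([a,b])=\lim_{n\to\infty}\frac{\psi([a,b]^n)}{n},
\]
and express $[a,b]^n$ as a product of roughly $n/2$ commutators. Each commutator $xyx^{-1}y^{-1}$ is estimated by splitting it as $(xyx^{-1})\cdot y^{-1}$. A conjugate of $y$ has $\psi$-value within $2\DD_\Gg(\psi)$ of $\psi(y)$, and $\psi(y)+\psi(y^{-1})$ is within $\DD_\Gg(\psi)+|\psi(1)|$ of $0$. Careful bookkeeping should give a contribution of $4\DD_\Gg(\psi)$ per commutator, up to an additive error independent of $n$. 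Dividing by $n$ then yields $|\phi([a,b])|\le 2\DD_\Gg(\psi)$.

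This bookkeeping is where I expect the difficulty. If it does not close cleanly, I would fall back on the standard argument of \cite[Lemma~2.58]{Calegari}, which is exactly this homogenization estimate.
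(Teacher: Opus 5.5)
Your route is the standard one: the lower bound is $\|\delta\phi\|_\infty=D(\phi)$, and the upper bound comes from applying $D(\bar\psi)\le 2D(\psi)$ to $\psi=\phi+b$ and taking the infimum over bounded $b$. The paper gives no argument of its own; it simply cites \cite[Lemma~2.58]{Calegari}.

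\textbf{The per-commutator bookkeeping fails as described.} This is the one step that does not go through. Write $\prod_{i=1}^m[x_i,y_i]=\prod_{i=1}^m (x_iy_ix_i^{-1})\,y_i^{-1}$ and add up the costs:
\begin{itemize}
\item splitting into $2m$ pieces costs $(2m-1)D(\psi)$, because the junctions between consecutive commutators also count;
\item the conjugations cost $2mD(\psi)$;
\item the pairs $\psi(y_i)+\psi(y_i^{-1})$ cost $m\bigl(D(\psi)+|\psi(1)|\bigr)$.
\end{itemize}
That is $5D(\psi)+|\psi(1)|$ per commutator, not $4D(\psi)$. With Culler's count of $\lfloor n/2\rfloor+1$ commutators for $[a,b]^n$, you only get $|\bar\psi([a,b])|\le \frac{5}{2}D(\psi)+\frac{1}{2}|\psi(1)|$. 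This can be as large as $3D(\psi)$.

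\textbf{The repair costs nothing here.} Replace $b$ by $b^{\pm}(g)=\frac{1}{2}\bigl(b(g)-b(g^{-1})\bigr)$.
\begin{itemize}
\item Since $\phi$ is odd, $\phi+b^{\pm}$ is the odd part $g\mapsto \frac{1}{2}\bigl(\psi(g)-\psi(g^{-1})\bigr)$ of $\psi$. Its defect is at most $D(\psi)$, and its homogenization is still $\phi$. So you may take the infimum over odd $b$ only.
\item For odd $\psi$ you have $\psi(1)=0$ and $\psi(y)+\psi(y^{-1})=0$. Your own splitting then gives $|\psi(\prod_{i=1}^m[x_i,y_i])|\le (4m-1)D(\psi)$.
\item Hence $|\bar\psi([a,b])|\le \lim_n (4\lfloor n/2\rfloor+3)D(\psi)/n=2D(\psi)$.
\end{itemize}
If you prefer not to symmetrize, use $\psi(g[x,y])+\psi(y)\sim_{D(\psi)}\psi(gxyx^{-1})\sim_{3D(\psi)}\psi(g)+\psi(y)$. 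This also makes each commutator cost exactly $4D(\psi)$.

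\textbf{Your first step needs a sharp count.} The number of commutators separating $(gh)^n$ from $g^nh^n$ must be $n/2+O(1)$, not merely linear in $n$. A count of $cn+O(1)$ only yields $(1-c)D(\phi)\le c\sup_{a,b}|\phi([a,b])|$, which is the desired inequality only when $c=1/2$.

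With these two points made precise, your argument is complete. Falling back on the citation for the homogenization estimate, as you suggest, is exactly what the paper does.
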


\section{Refined version of the generalized mixed Bavard duality theorem}\label{sec=refinedBavard}
 The goal of this section is to obtain Theorem~\ref{thm=refinedgmBavard}, the refined version of the generalized mixed Bavard duality theorem. 

\subsection{$\Ng$-quasimorphisms and the norm $\|\cdot\|'$}\label{subsec=N-quasimorphisms}
Before proceeding to the discussion of our refined version of the generalized mixed Bavard duality theorem, we review the notions of $\Ng$-quasimorphisms and the norm $\|\cdot\|'$ on the space $\BBB'_1(\Gg,\Ng;\RR)$ of $1$-chains. We refer the reader to \cite{KKMMM_KIAS} for more details, including the proofs that are omitted in the present paper. We note that   $\Ng$-quasimorphisms can be seen  as a special case of partial quasimorphisms  (see \cite[Example~11.4]{KKMMMsurvey}). 

\begin{definition}[{\cite[Section~2]{KKMM1}}] \label{definition_N-quasimorphism}
Let $\Gg$ be a group and $\Ng$ its normal subgroup.
\begin{enumerate}[label=(\arabic*)]
 \item A function $\psf \colon \Gg \to \RR$ is an \emph{$\Ng$-quasimorphism on $\Gg$} if $\DD''_{\Gg,\Ng}(\psf)$ is finite, where $\DD''_{\Gg,\Ng}(\psf)$ is defined to be
\[
\sup\left\{\max\{|\psf(\gl \xl) - \psf(\gl) - \psf(\xl)|,\, |\psf(\xl \gl) - \psf(\xl) - \psf(\gl)|\}\;|\; \gl\in \Gg,\ \xl\in \Ng\right\}.
\]
 \item An \emph{$\Ng$-homomorphism on $\Gg$} is an $\Ng$-quasimorphism $\psf \colon \Gg \to \RR$ on $\Gg$ such that $\DD''_{\Gg,\Ng}(\psf) = 0$.
  \item Define $\rQQQ_{\Ng}(\Gg)$ as the $\RR$-linear space of $\Ng$-quasimorphisms on $\Gg$, and $\HHH^1_{\Ng}(\Gg)$ as the $\RR$-linear space of $\Ng$-homomorphisms on $\Gg$.
\end{enumerate}
\end{definition}

We note that $\DD''_{\Gg,\Ng}$ induces a norm on $\rQQQ_{\Ng}(\Gg) / \HHH^1_{\Ng}(\Gg)$; we write $\DD''_{\Gg,\Ng}$ for this norm as well.

\begin{proposition}[{\cite[Proposition~2.4]{KKMM1}}] \label{prop 3.4.1}
For every $\muf\in \QQQ(\Ng)^{\Gg}$, there exists $\psf \in \rQQQ_{\Ng}(\Gg)$ such that  $\psf |_\Ng = \muf$ and that $\DD''_{\Gg, \Ng}(\psf) = \DD_{\Ng}(\muf)$.
\end{proposition}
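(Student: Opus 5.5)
The plan is to extend $\muf$ to $\Gg$ by making it constant along cosets, using a set-theoretic section of the quotient map $\ppi\colon \Gg\twoheadrightarrow \Gam=\Gg/\Ng$. Choose a section $s\colon \Gam\to \Gg$ with $\ppi\circ s=\mathrm{id}_{\Gam}$ and $s(e_{\Gam})=e_{\Gg}$. Every $\gl\in\Gg$ then has a unique expression $\gl=\xl\, s(\gamma)$ with $\xl\in\Ng$ and $\gamma=\ppi(\gl)$. Define
\[
\psf\colon \Gg\to\RR;\quad \psf(\xl\, s(\gamma))=\muf(\xl).
\]
Since $s(e_{\Gam})=e_{\Gg}$, we immediately get $\psf|_{\Ng}=\muf$.

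Next I bound $\DD''_{\Gg,\Ng}(\psf)$ from above by $\DD_{\Ng}(\muf)$, treating the two terms in the definition separately. Write $\gl=\xl\, s(\gamma)$ and let $\xl'\in\Ng$.
\begin{enumerate}[label=(\alph*)]
  \item For left multiplication, $\xl'\gl=(\xl'\xl)\, s(\gamma)$. Hence $\psf(\xl'\gl)=\muf(\xl'\xl)\sim_{\DD_{\Ng}(\muf)}\muf(\xl')+\muf(\xl)=\psf(\xl')+\psf(\gl)$.
  \item For right multiplication, rewrite $\gl\xl'=\xl\,\bigl(s(\gamma)\xl' s(\gamma)^{-1}\bigr)\, s(\gamma)$, where $s(\gamma)\xl' s(\gamma)^{-1}\in\Ng$ by normality. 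Then
\[
\psf(\gl\xl')=\muf\bigl(\xl\cdot s(\gamma)\xl' s(\gamma)^{-1}\bigr)\sim_{\DD_{\Ng}(\muf)}\muf(\xl)+\muf\bigl(s(\gamma)\xl' s(\gamma)^{-1}\bigr).
\]
The $\Gg$-invariance of $\muf$ (Definition~\ref{defn=QNG}) turns the last term into $\muf(\xl')=\psf(\xl')$.
\end{enumerate}
Together, (a) and (b) give $\DD''_{\Gg,\Ng}(\psf)\le \DD_{\Ng}(\muf)$; in particular $\psf\in\rQQQ_{\Ng}(\Gg)$.

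For the reverse inequality, restrict the supremum defining $\DD''_{\Gg,\Ng}(\psf)$ to pairs with $\gl\in\Ng$. Since $\psf|_{\Ng}=\muf$, these pairs already give the full defect of $\muf$, so $\DD''_{\Gg,\Ng}(\psf)\ge\DD_{\Ng}(\muf)$, and equality follows.

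The only step that needs any care is (b). This is exactly where $\Gg$-invariance of $\muf$ is used, and it is why the construction requires $\muf\in\QQQ(\Ng)^{\Gg}$ rather than merely $\muf\in\QQQ(\Ng)$. Everything else is bookkeeping with the coset decomposition.
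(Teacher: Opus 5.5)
Your proof is correct. It follows the standard argument behind the cited result; the paper itself only quotes \cite[Proposition~2.4]{KKMM1}. That argument extends $\muf$ constantly along cosets via a normalized set-theoretic section of $\Gg\twoheadrightarrow\Gg/\Ng$, bounds the left-multiplication defect directly, bounds the right-multiplication defect using exact $\Gg$-invariance, and gets the reverse inequality by restricting the supremum to pairs in $\Ng$.
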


Recall from Definition~\ref{defn=chainspace} that the $\RR$-linear space $\mathcal{C}_{\RR}(\Gg;\Ng)$ is defined as 
\[
\mathcal{C}_{\RR}(\Gg,\Ng)=\CCC_1(\Ng;\RR)\cap \BBB'_1(\Gg,\Ng;\RR)=\CCC_1(\Ng;\RR)\cap \partial \CCC'_2(\Gg,\Ng;\RR).
\]
If we set $\ZZZ'_2(\Gg,\Ng;\RR)=\Ker(\partial|_{\CCC'_2(\Gg,\Ng;\RR)})$, then $\partial|_{\CCC'_2(\Gg,\Ng;\RR)}$ induces an $\RR$-linear space isomorphism 
\begin{equation}\label{eq=BBB'}
\BBB'_1(\Gg,\Ng;\RR)\cong \CCC'_2(\Gg,\Ng;\RR)/\ZZZ'_2(\Gg,\Ng;\RR).
\end{equation}
Now, we equip $\CCC'_2(\Gg,\Ng;\RR)$ with the $\ell^1$-norm $\|\cdot\|_1$. Thus, we endow $\BBB'_1(\Gg,\Ng;\RR)$ with the quotient norm by \eqref{eq=BBB'}, and we write $\|\cdot\|'$ for this norm. In other words, for $c\in \BBB'_1(\Gg,\Ng;\RR)$, $\| c\|' = \inf \{ \| c' \|_1 \; | \; c' \in \CCC_2'(\Gg,\Ng ; \RR),\ \partial c'=c \}$. 
We equip $\mathcal{C}_{\RR}(\Gg,\Ng)$ with this norm $\|\cdot\|'$ as well.

\begin{proposition}[{\cite[Proposition~3.5]{KKMM1}; see also \cite[Proposition~5.7]{KKMMM_KIAS}}]\label{prop=dualQNG}
The normed space $(\rQQQ_{\Ng}(\Gg) / \HHH^1_{\Ng}(\Gg), \DD''_{\Gg,\Ng})$ is isometrically isomorphic to the continuous dual of the space $(\BBB_1'(\Gg,\Ng; \RR), \| \cdot\|')$, and the isomorphism is induced by the  pairing
\begin{equation}\label{eq=B_1pairing}
\Big(\rQQQ_{\Ng}(\Gg) / \HHH^1_{\Ng}(\Gg)\Big)\times \BBB_1'(\Gg,\Ng; \RR)\to \RR;\quad ([\psf],c)\mapsto \psf(c).
\end{equation}
\end{proposition}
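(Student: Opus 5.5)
The plan is to identify the dual of $\BBB'_1(\Gg,\Ng;\RR)$ through the presentation \eqref{eq=BBB'} as a quotient of an $\ell^1$-space. Then Lemma~\ref{lem=dualquotient}~(2) reduces the problem to describing an annihilator inside an $\ell^\infty$-space.

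\emph{Step 1.} Let $P=\{(\gl_1,\gl_2)\in \Gg\times\Gg \;|\; \gl_1\in\Ng \text{ or } \gl_2\in\Ng\}$. Then $(\CCC'_2(\Gg,\Ng;\RR),\|\cdot\|_1)$ is the space of finitely supported functions on $P$ with the $\ell^1$-norm. Its continuous dual is $\ell^\infty(P)$, with pairing $\langle f,\sigma\rangle=\sum f(\gl_1,\gl_2)\sigma(\gl_1,\gl_2)$. This holds even without completeness, since finitely supported functions are dense in $\ell^1(P)$.

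The map $\partial$ satisfies $\|\partial\sigma\|_1\le 3\|\sigma\|_1$ into $\CCC_1(\Gg;\RR)$ with its $\ell^1$-norm, so it is bounded. Hence $\ZZZ'_2(\Gg,\Ng;\RR)=\Ker(\partial|_{\CCC'_2})$ is norm-closed. Lemma~\ref{lem=dualquotient}~(2) then gives that $(\BBB'_1,\|\cdot\|')^\ast$ is isometrically isomorphic to $(\ZZZ'_2)^\perp\subseteq\ell^\infty(P)$ with the sup-norm. The identification sends $f\in(\ZZZ'_2)^\perp$ to the functional $\lambda_f$ determined by $\lambda_f(\partial\sigma)=\langle f,\sigma\rangle$.

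\emph{Step 2.} Next we identify $(\ZZZ'_2)^\perp$ with $\{\delta\psf|_P \;|\; \psf\in\rQQQ_{\Ng}(\Gg)\}$, where $\delta\psf(\gl_1,\gl_2)=\psf(\gl_2)-\psf(\gl_1\gl_2)+\psf(\gl_1)$.

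For one inclusion, take any function $\psf\colon\Gg\to\RR$ and extend it linearly to $1$-chains. Then $\langle\delta\psf|_P,\sigma\rangle=\psf(\partial\sigma)$, which vanishes on cycles. Moreover $\|\delta\psf|_P\|_\infty=\DD''_{\Gg,\Ng}(\psf)$ directly from the definition of $\DD''_{\Gg,\Ng}$. So $\delta\psf|_P$ is bounded exactly when $\psf$ is an $\Ng$-quasimorphism.

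For the reverse inclusion, let $f\in(\ZZZ'_2)^\perp$. The associated $\lambda_f$ is a linear functional on the subspace $\BBB'_1\subseteq\CCC_1(\Gg;\RR)$. Extend it algebraically, via a Hamel complement, to a linear functional $\Lambda$ on $\CCC_1(\Gg;\RR)$. Define $\psf(\gl)=\Lambda(\gl)$ for each basis element $\gl$. Then $\delta\psf|_P=f$, and $\psf\in\rQQQ_{\Ng}(\Gg)$ with $\DD''_{\Gg,\Ng}(\psf)=\|f\|_\infty$.

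\emph{Step 3.} Consider the map $\rQQQ_{\Ng}(\Gg)\to(\ZZZ'_2)^\perp$, $\psf\mapsto\delta\psf|_P$. By Step 2 it is linear and surjective. Its kernel consists of the $\psf$ with $\delta\psf$ vanishing on $P$, which is exactly $\HHH^1_{\Ng}(\Gg)$. It is also isometric from the seminorm $\DD''_{\Gg,\Ng}$ to the sup-norm. Hence it descends to an isometric isomorphism
\[
\Big(\rQQQ_{\Ng}(\Gg)/\HHH^1_{\Ng}(\Gg),\DD''_{\Gg,\Ng}\Big)\cong (\ZZZ'_2)^\perp .
\]
Composing with Step 1, the functional attached to $[\psf]$ is $c=\partial\sigma\mapsto\psf(\partial\sigma)=\psf(c)$. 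This is precisely the pairing \eqref{eq=B_1pairing}, and it is well defined in $[\psf]$ because $\Ng$-homomorphisms vanish on $\BBB'_1$.

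The only delicate points are the surjectivity step, which requires the algebraic extension from $\BBB'_1$ to all $1$-chains, and checking that the norm on the dual side is genuinely the sup over $P$ rather than over all pairs in $\Gg\times\Gg$. The latter works because $\CCC'_2$ is spanned only by the pairs in $P$.
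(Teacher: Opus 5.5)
Your proof is correct. You identify $(B'_1,\|\cdot\|')^\ast$ with the annihilator of $Z'_2$ in $\ell^\infty(P)$, then realize each annihilating function as $\delta\psi|_P$ by extending the functional algebraically from $B'_1$ to all of $C_1(G;\mathbb{R})$, and finally note that $\|\delta\psi|_P\|_\infty=D''_{G,N}(\psi)$ and that the kernel is exactly the $N$-homomorphisms; together these give the stated isometric isomorphism and pairing. The paper does not reprove the statement but cites it from \cite{KKMM1}, and your route is the standard argument for such statements, the mixed analogue of Calegari's identification of the dual of the filling norm on $B_1(G;\mathbb{R})$.
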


\subsection{Refined version of the generalized mixed Bavard duality theorem}\label{subsec=refinedBavard}
Here, we proceed to the study of the refined version of the generalized mixed Bavard duality theorem. By Proposition~\ref{prop=dualQNG}, it is natural to ask whether $(\QQQ(\Ng)^{\Gg}/\HHH^1(\Ng)^{\Gg},\DD_{\Ng})$ is the continuous dual of a certain space of chains. To answer this question, we introduce the following normed space. We note that Calegari \cite{Calegari} considered a quotient vector space similar to this for the case where $\Ng=\Gg$, but without taking the norm closure.

\begin{definition}[$\ChGN$]\label{defn=ChGN}
Let $\Gg$ be a group and $\Ng$ its normal subgroup. 
\begin{enumerate}[label=(\arabic*)]
  \item We define the $\RR$-linear subspace $\mathrm{h}(\Gg,\Ng)$  of $\mathcal{C}_{\RR}(\Gg,\Ng)$ to be the $\RR$-span of $\{\xl^k-k \xl \;|\; \xl \in \Ng,\, k\in \ZZ\}$. 
  \item We define
\[
\ChGN=\mathcal{C}_{\RR}(\Gg,\Ng)/\overline{\mathrm{h}(\Gg,\Ng)}^{\|\cdot\|'}.
\]
Here, for a subspace $U$ of $\mathcal{C}_{\RR}(\Gg,\Ng)$, the symbol $\overline{U}^{\|\cdot\|'}$ means the norm closure of $U$ in $\|\cdot\|'$. Define $\|\cdot\|_{\hhh}$ to be the quotient norm on $\ChGN$.
  \item For $R=\ZZ$ or $R=\QQ$, define $\mathcal{C}^{\hhh}_R(\Gg,\Ng)$ to be the image of $\mathcal{C}_R(\Gg,\Ng)$ under the canonical projection $\mathcal{C}_{\RR}(\Gg,\Ng)\twoheadrightarrow \ChGN$.
\end{enumerate}
We write  $\ChG$ and $\mathcal{C}^{\hhh}_R(\Gg)$ for $\mathcal{C}^{\hhh}(\Gg,\Gg)$ and  $\mathcal{C}^{\hhh}_R(\Gg,\Gg)$, respectively.  
\end{definition}

We note that $\mathcal{C}^{\hhh}_{\QQ}(\Gg,\Ng)$ is dense in $(\ChGN,\|\cdot\|_{\hhh})$ because $\mathcal{C}_{\QQ}(\Gg,\Ng)$ is dense in $(\mathcal{C}_{\RR}(\Gg,\Ng),\|\cdot\|')$. By abuse of notation, we express an element $[c]$ in $\ChGN$ as $c$, for $c\in \mathcal{C}_{\RR}(\Gg,\Ng)$. We note that for $\muf\in \QQQ(\Ng)^{\Gg}$ and $[c]\in \ChGN$, the value $\muf(c)$ is well-defined because $\muf$ is homogeneous.

\begin{proposition}\label{prop=dualQN}
The duality pairing \eqref{eq=B_1pairing} induces a well-defined pairing
\begin{equation}\label{eq=C^hpairng}
\Big(\QQQ(\Ng)^{\Gg} / \HHH^1(\Ng)^{\Gg},\DD_{\Ng}\Big)\times (\ChGN,\|\cdot\|_{\hhh})\to \RR;\quad ([\muf],c)\mapsto \muf(c),
\end{equation}
and this is the duality pairing for $(\ChGN,\|\cdot\|_{\hhh})$. In particular, $(\ChGN,\|\cdot\|_{\hhh})^{\ast}$ is isometrically isomorphic to $(\QQQ(\Ng)^{\Gg} / \HHH^1(\Ng)^{\Gg},\DD_{\Ng})$.
\end{proposition}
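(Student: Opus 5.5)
The plan is to combine Proposition~\ref{prop=dualQNG} with the two parts of Lemma~\ref{lem=dualquotient}. Write $X=(\BBB'_1(\Gg,\Ng;\RR),\|\cdot\|')$, whose dual is $\rQQQ_{\Ng}(\Gg)/\HHH^1_{\Ng}(\Gg)$ via \eqref{eq=B_1pairing}. Take the subspace $U=\mathcal{C}_{\RR}(\Gg,\Ng)$. By Lemma~\ref{lem=dualquotient}~(1), $U^{\ast}$ is isometrically $X^{\ast}/U^{\perp}$, with the pairing $([\psf],c)\mapsto \psf(c)$.

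Next set $V=\overline{\mathrm{h}(\Gg,\Ng)}^{\|\cdot\|'}$, which is closed in $U$. By Lemma~\ref{lem=dualquotient}~(2), $(U/V)^{\ast}=\ChGN^{\ast}$ is isometrically the annihilator of $V$ inside $U^{\ast}=X^{\ast}/U^{\perp}$. By continuity, this annihilator equals the annihilator of $\mathrm{h}(\Gg,\Ng)$. So the task reduces to identifying
\[
\{[\psf]\in X^{\ast}/U^{\perp} \;|\; \psf(\xl^k)=k\psf(\xl)\ \text{for all}\ \xl\in\Ng,\ k\in\ZZ\},
\]
with the quotient norm from $D''_{\Gg,\Ng}$, isometrically with $(\QQQ(\Ng)^{\Gg}/\HHH^1(\Ng)^{\Gg},\DD_{\Ng})$.

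Map $[\muf]\mapsto[\psf]$, where $\psf$ is the extension from Proposition~\ref{prop 3.4.1}. I would check the following in order.

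\textbf{Well-definedness and independence.} The values $\psf(c)$ for $c\in U\subseteq \CCC_1(\Ng;\RR)$ depend only on $\psf|_{\Ng}=\muf$. So the class modulo $U^{\perp}$ does not depend on the chosen extension. If $\muf\in\HHH^1(\Ng)^{\Gg}$, then $\muf(c)=0$ for every $c\in U$. This holds because $\muf$ extends (again by Proposition~\ref{prop 3.4.1}) to an $\Ng$-homomorphism, which kills $\partial\CCC'_2$. The map lands in the annihilator because $\muf$ is homogeneous.

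\textbf{Injectivity.} If $\muf(c)=0$ for all $c\in U$, I would apply this to the chains $\xl\yl-\xl-\yl$ (for $\xl,\yl\in\Ng$) and $\gl\xl\gl^{-1}-\xl$ (for $\gl\in\Gg$), which lie in $U$. This shows that $\muf$ is a $\Gg$-invariant homomorphism.

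\textbf{Surjectivity.} Given $\psf$ in $X^{\ast}$ annihilating $\mathrm{h}$ modulo $U^{\perp}$, set $\muf=\psf|_{\Ng}$. Then $\muf$ is a quasimorphism, since $D_{\Ng}(\muf)\le D''(\psf)$. It is homogeneous, since $\psf$ vanishes on $\xl^k-k\xl$. It is $\Gg$-invariant: the chain $\gl\xl\gl^{-1}-\xl$ lies in $U$ and is the boundary of the chain $(\gl,\xl)+(\gl\xl,\gl^{-1})-(\gl,\gl^{-1})$, up to constant chains. This gives a uniformly bounded difference $|\muf(\gl\xl\gl^{-1})-\muf(\xl)|\le 3D''(\psf)$, and Lemma~\ref{lem=quasiinv} upgrades this to genuine invariance. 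Moreover $\muf$ agrees with $\psf$ on $U$, so $[\muf]$ maps to $[\psf]$.

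\textbf{Isometry.} I expect this to be the main obstacle. The norm of $[\psf]$ in $X^{\ast}/U^{\perp}$ is $\inf D''(\psf')$ over all $\psf'$ agreeing with $\psf$ on $U$; by Lemma~\ref{lem=dualquotient}~(1) this equals the operator norm of $c\mapsto\muf(c)$ on $U$. For the upper bound, Proposition~\ref{prop 3.4.1} provides an extension $\psf$ with $D''(\psf)=\DD_{\Ng}(\muf)$, and then $\muf+\HHH^1(\Ng)^{\Gg}$ only improves the bound. For the lower bound, any $\psf'$ restricting on $U$ to $\muf+h$ with $h\in\HHH^1(\Ng)^{\Gg}$ has $D''(\psf')\ge D_{\Ng}(\psf'|_{\Ng})$. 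The difficulty is that $\psf'|_{\Ng}$ may not be homogeneous. I would homogenize it: its homogenization agrees with $\muf+h$ on $U$, because $U$ is invariant under the relations $\xl^k\sim k\xl$ modulo $\mathrm{h}$. Homogenization can double the defect, so to get the sharp inequality I would instead argue through Theorem~\ref{thm=gmBavard}. On $\mathcal{C}_{\QQ}$, the operator norm of $\muf$ with respect to $\|\cdot\|'$ modulo $\mathrm{h}$ equals $\DD_{\Ng}(\muf)$ up to identification; I would compare $\|\cdot\|_{\hhh}$ with $2\scl_{\Gg,\Ng}$ (or with $4\scl$, depending on the normalization) and use density of $\mathcal{C}_{\QQ}$.
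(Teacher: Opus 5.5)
\textbf{The isometry step has a genuine gap.} The overall architecture matches the paper's proof. You apply Lemma~\ref{lem=dualquotient}~(1) to $U=\mathcal{C}_{\RR}(\Gg,\Ng)\subseteq\BBB'_1(\Gg,\Ng;\RR)$, then Lemma~\ref{lem=dualquotient}~(2) to the closure of $\mathrm{h}(\Gg,\Ng)$. You use Proposition~\ref{prop 3.4.1} for the upper bound on the norm and Lemma~\ref{lem=quasiinv} for $\Gg$-invariance. The gap is the \emph{lower} bound in the isometry, which you leave open, and the route you sketch does not close it. In this paper the identity $\sclGN(c)=\frac{1}{2}\|c\|_{\hhh}$ of Theorem~\ref{thm=refinedgmBavard}~(1) is derived \emph{from} the present proposition via \eqref{eq=HBh}, so invoking it here is circular. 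Also, Theorem~\ref{thm=gmBavard} alone only gives $|\muf(c)|\le 2\DD_{\Ng}(\muf)\sclGN(c)$. That bounds the functional $c\mapsto\muf(c)$ from above, which is the wrong direction. You would still need explicit test chains on which $\muf$ nearly attains $\DD_{\Ng}(\muf)$. Once you have those, scl plays no role, and the ``$2$ or $4$'' normalization question never arises.

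\textbf{The missing idea: the defect is itself a pairing with chains in $U$.} For $\xl_1,\xl_2\in\Ng$, the chain $\xl_1\xl_2-\xl_1-\xl_2=-\partial(\xl_1,\xl_2)$ lies in $\mathcal{C}_{\RR}(\Gg,\Ng)$ and has $\|\cdot\|'$-norm at most $1$. Suppose $\psf'\in\rQQQ_{\Ng}(\Gg)$ agrees on $U$ with $\muf+h$, where $h\in\HHH^1(\Ng)^{\Gg}$. Then $\psf'(\xl_1\xl_2-\xl_1-\xl_2)=\muf(\xl_1\xl_2-\xl_1-\xl_2)$ for all $\xl_1,\xl_2$. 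Hence $\psf'|_{\Ng}-(\muf+h)$ is a homomorphism on $\Ng$, so your homogeneity worry disappears and no homogenization is needed. Moreover
\[
\DD''_{\Gg,\Ng}(\psf')\geq \DD_{\Ng}(\psf'|_{\Ng})=\sup_{\xl_1,\xl_2\in\Ng}|\muf(\xl_1\xl_2-\xl_1-\xl_2)|=\DD_{\Ng}(\muf).
\]
Equivalently, testing $c\mapsto\muf(c)$ on these chains shows its operator norm on $(U,\|\cdot\|')$ is at least $\DD_{\Ng}(\muf)$. This is exactly the paper's inequality \eqref{eq=defectabove}. Combined with your upper bound from Proposition~\ref{prop 3.4.1}, it gives the isometry with no loss of constants.

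\textbf{A smaller slip in your surjectivity step.} The chain $(\gl,\xl)+(\gl\xl,\gl^{-1})-(\gl,\gl^{-1})$ is not in $\CCC'_2(\Gg,\Ng;\RR)$ in general: neither entry of $(\gl\xl,\gl^{-1})$ or $(\gl,\gl^{-1})$ need lie in $\Ng$. So $\DD''_{\Gg,\Ng}$ does not control $\psf$ on those boundaries. Use instead $\gl\xl\gl^{-1}-\xl=\partial\big((\gl\xl\gl^{-1},\gl)-(\gl,\xl)\big)$. This gives $|\psf(\gl\xl\gl^{-1})-\psf(\xl)|\le 2\DD''_{\Gg,\Ng}(\psf)$, as in \eqref{eq=quasiinv}, and the rest of that step then goes through.
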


By Proposition~\ref{prop=FA}~(1), we in particular have 
\begin{equation}\label{eq=HBh}
\|c\|_{\hhh}=\sup_{[\muf]\in (\QQQ(\Ng)^{\Gg} / \HHH^1(\Ng)^{\Gg})\setminus \{0\}}\frac{|\muf(c)|}{\DD_{\Ng}(\muf)}
\end{equation}
for every $c\in \ChGN$.

\begin{proof}[Proof of Proposition~\textup{\ref{prop=dualQN}}]
Set $X=\rQQQ_{\Ng}(\Gg)/\HHH^1_{\Ng}(\Gg)$. First, we apply Lemma~\ref{lem=dualquotient}~(1) for a subspace $\mathcal{C}_{\RR}(\Gg,\Ng)$ of $\BBB'(\Gg,\Ng;\RR)$. Then, by Proposition~\ref{prop=dualQNG}, 
\begin{equation}\label{eq=dual/C}
(X/\mathcal{C}_{\RR}(\Gg,\Ng)^{\perp},\|\cdot\|)\times (\mathcal{C}_{\RR}(\Gg,\Ng),\|\cdot\|')\to \mathbb{R};\quad ([\psf],c)\mapsto \psf(c)
\end{equation}
is the duality pairing for $(\mathcal{C}_{\RR}(\Gg,\Ng),\|\cdot\|')$. Here, $\|\cdot\|$ is the quotient norm on $X/\mathcal{C}_{\RR}(\Gg,\Ng)^{\perp}$, namely, for $\psf\in \rQQQ_{\Ng}(\Gg)$,
\[
\|[\psf]\|=\inf \{\DD''_{\Gg,\Ng}(\psf+f)\;|\; f+\HHH^1_{\Ng}(\Gg)\in \mathcal{C}_{\RR}(\Gg,\Ng)^{\perp}\}.
\]
For $\psf\in \rQQQ_{\Ng}(\Gg)$, we claim that 
\begin{equation}\label{eq=defectabove}
\|[\psf]\|\geq  \DD_{\Ng}(\psf|_{\Ng}).
\end{equation}
Indeed, let $f\in \rQQQ_{\Ng}(\Gg)$ with $f|_{\mathcal{C}_{\RR}(\Gg,\Ng)}\equiv 0$. Then, by the definitions of $\DD''_{\Gg,\Ng}$ and $\DD_{\Ng}$, we have
\begin{align*}
\DD''_{\Gg,\Ng}(\psf+f)&\geq \DD_{\Ng}((\psf+f)|_{\Ng})\\
&=\sup\{|(\psf+f)(\xl_1\xl_2-\xl_1-\xl_2)|\;|\;\xl_1,\xl_2\in \Ng\}\\
&=\sup\{|\psf(\xl_1\xl_2-\xl_1-\xl_2)|\;|\;\xl_1,\xl_2\in \Ng\}=\DD_{\Ng}(\psf|_{\Ng}).
\end{align*}
Here, note that $\xl_1\xl_2-\xl_1-\xl_2 \in \mathcal{C}_{\RR}(\Gg,\Ng)$ for all $\xl_1,\xl_2\in \Ng$.

Secondly, apply Lemma~\ref{lem=dualquotient}~(2) for the quotient space $\ChGN$ of $\mathcal{C}_{\RR}(\Gg,\Ng)$. Then, by \eqref{eq=dual/C}, the pairing
\begin{equation}\label{eq=dualhperp}
\Big(\big(\overline{\mathrm{h}(\Gg,\Ng)}^{\|\cdot\|'}\big)^{\perp},\|\cdot\|\Big)\times (\ChGN,\|\cdot\|_{\hhh})\to \mathbb{R};\quad ([\psf],c)\mapsto \psf(c)
\end{equation}
is the duality pairing for $(\ChGN,\|\cdot\|_{\hhh})$. Note that $\big(\overline{\mathrm{h}(\Gg,\Ng)}^{\|\cdot\|'}\big)^{\perp}$ equals $\mathrm{h}(\Gg,\Ng)^{\perp}$ by continuity. Let $[\psf]\in \mathrm{h}(\Gg,\Ng)^{\perp} \subseteq X/\mathcal{C}_{\RR}(\Gg,\Ng)^{\perp}$. Then, we  first  claim that $\psf|_{\Ng}\in \QQQ(\Ng)^{\Gg}$. Indeed, since $[\psf]\in \mathrm{h}(\Gg,\Ng)^{\perp}$, for every $\xl\in \Ng$ and $k\in \ZZ$ we have
\[
\psf|_{\Ng}(\xl^k)-k\cdot \psf|_{\Ng}(\xl)=0.
\]
In other words, $\psf|_{\Ng}$ is homogeneous. The map $\psf|_{\Ng}$ is a quasimorphism on $\Ng$ by \eqref{eq=defectabove}. To see that $\psf|_{\Ng}$ is $\Gg$-invariant, for every $\gl\in \Gg$ and $\xl\in \Ng$ we have
\[
\psf(\gl \xl \gl^{-1}) + \psf(\gl ) \sim_{D''_{\Gg,\Ng}(\psf)} \psf(\gl \xl) \sim_{D''_{\Gg,\Ng}(\psf)} \psf(\gl ) + \psf(\xl).
\]
Hence, we have
\begin{equation}\label{eq=quasiinv}
\psf(\gl \xl \gl^{-1}) \sim_{2D''_{\Gg,\Ng}(\psf)} \psf(\xl).
\end{equation}
By homogeneity of $\psf|_{\Ng}$, Lemma~\ref{lem=quasiinv} implies that $\psf|_{\Ng}\in \QQQ(\Ng)^{\Gg}$.  Our second claim is  that $\|[\psf]\|=\DD_{\Ng}(\psf|_{\Ng})$. 
To see this, we will prove that
\begin{equation}\label{eq=defectbelow}
\|[\psf]\|\leq  \DD_{\Ng}(\psf|_{\Ng}).
\end{equation}
Let $\muf=\psf|_{\Ng}$. Then, by the  first  claim, $\muf\in \QQQ(\Ng)^{\Gg}$. By Proposition~\ref{prop 3.4.1}, there exists $\hat{\psf}\in \rQQQ_{\Ng}(\Gg)$ such that $\hat{\psf}|_{\Ng}=\muf$ and that $\DD''_{\Gg,\Ng}(\hat{\psf})=\DD_{\Ng}(\muf)$. Then, $(\hat{\psf}-\psf)|_{\Ng}\equiv 0$ and in particular, $\hat{\psf}-\psf$ vanishes on $\mathcal{C}_{\RR}(\Gg,\Ng)$. Therefore, we have
\[
\|[\psf]\|\leq \DD''_{\Gg,\Ng}(\psf +(\hat{\psf}-\psf))= \DD''_{\Gg,\Ng}(\hat{\psf})=\DD_{\Ng}(\muf),
\]
as desired. Now, \eqref{eq=defectbelow} and \eqref{eq=defectabove}  complete  the proof of the  second  claim.

By the two claims above and Proposition~\ref{prop 3.4.1}, we conclude that the map $\psf\mapsto \psf|_{\Ng}$ induces a surjective linear isometry $(\mathrm{h}(\Gg,\Ng)^{\perp},\|\cdot\|)\stackrel{\cong}{\to} (\QQQ(\Ng)^{\Gg},\DD_{\Ng})$. 
By rewriting the pairing \eqref{eq=dualhperp} via this isometry, we obtain the desired duality pairing \eqref{eq=C^hpairng}.
\end{proof}

Now we are in position to state the refined version of the generalized mixed Bavard duality theorem.

\begin{theorem}[refined version of the generalized mixed Bavard duality theorem]\label{thm=refinedgmBavard}
Let $\Gg$ be a group and $\Ng$ its normal subgroup. Then, the following hold.
\begin{enumerate}[label=\textup{(\arabic*)}]
  \item For every $c\in \mathcal{C}_{\QQ}(\Gg,\Ng)$, we have
\[
\scl_{\Gg,\Ng}(c)=\frac{1}{2}\|c\|_{\hhh}.
\]
  \item The function $\scl_{\Gg,\Ng}\colon \mathcal{C}^{\hhh}_{\QQ}(\Gg,\Ng)\to \RR_{\geq 0}$ admits a unique continuous extension
\[
\scl_{\Gg,\Ng}\colon \ChGN\to \RR_{\geq 0}.
\]
Furthermore, the map
\begin{equation*}
\Big(\QQQ(\Ng)^{\Gg} / \HHH^1(\Ng)^{\Gg},2\DD_{\Ng}\Big)\times (\ChGN,\scl_{\Gg,\Ng})\to \RR;\quad ([\muf],c)\mapsto \muf(c),
\end{equation*}
is the duality pairing for $(\ChGN,\scl_{\Gg,\Ng})$. In particular, $(\ChGN,\scl_{\Gg,\Ng})^{\ast}$ is isometrically isomorphic to $(\QQQ(\Ng)^{\Gg} / \HHH^1(\Ng)^{\Gg},2\DD_{\Ng})$.
\end{enumerate}
\end{theorem}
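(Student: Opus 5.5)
Part (1) follows by combining the generalized mixed Bavard duality theorem (Theorem~\ref{thm=gmBavard}) with the norm formula \eqref{eq=HBh} obtained from Proposition~\ref{prop=dualQN}. Let $c\in \mathcal{C}_{\QQ}(\Gg,\Ng)$, and regard it also as an element of $\ChGN$ via the canonical projection. For every $[\muf]\in \QQQ(\Ng)^{\Gg}/\HHH^1(\Ng)^{\Gg}$, the value $\muf(c)$ is the same whether computed on $c$ as a chain or on its class in $\ChGN$. This is because $\muf$ vanishes on $\mathrm{h}(\Gg,\Ng)$ by homogeneity, and hence on its $\|\cdot\|'$-closure by continuity of the pairing. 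Theorem~\ref{thm=gmBavard} gives $\scl_{\Gg,\Ng}(c)=\sup |\muf(c)|/(2\DD_{\Ng}(\muf))$, and \eqref{eq=HBh} gives $\|c\|_{\hhh}=\sup |\muf(c)|/\DD_{\Ng}(\muf)$, both suprema being over the same index set. Hence $\scl_{\Gg,\Ng}(c)=\frac12\|c\|_{\hhh}$. One should check the degenerate case of an empty index set: there both sides vanish, by the convention on empty suprema and because the dual is then trivial.

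For part (2), well-definedness on $\mathcal{C}^{\hhh}_{\QQ}(\Gg,\Ng)$ is the first point to settle. If $c,c'\in \mathcal{C}_{\QQ}(\Gg,\Ng)$ have the same image in $\ChGN$, then $c-c'\in \mathcal{C}_{\QQ}(\Gg,\Ng)$ has $\|c-c'\|_{\hhh}=0$. Applying part (1) together with seminormality of $\scl_{\Gg,\Ng}$ on $\mathcal{C}_{\QQ}(\Gg,\Ng)$ (which is stated in Subsection~\ref{subsec=Qchains}), we get $|\scl_{\Gg,\Ng}(c)-\scl_{\Gg,\Ng}(c')|\le \scl_{\Gg,\Ng}(c-c')=0$. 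Alternatively, this follows directly from part (1), since $\scl_{\Gg,\Ng}(c)=\frac12\|c\|_{\hhh}$ depends only on the class of $c$. So $\scl_{\Gg,\Ng}=\frac12\|\cdot\|_{\hhh}$ on the subset $\mathcal{C}^{\hhh}_{\QQ}(\Gg,\Ng)$, which is dense in $\ChGN$, as noted after Definition~\ref{defn=ChGN}. The function $\frac12\|\cdot\|_{\hhh}$ is continuous on $\ChGN$ and extends it. Uniqueness of a continuous extension follows from density. Therefore the extension is exactly $\frac12\|\cdot\|_{\hhh}$, a genuine norm on $\ChGN$.

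Finally, rescaling a norm rescales the dual norm inversely: if $\|\cdot\|_Y=\frac12\|\cdot\|_X$, then $\|\phi\|_{Y^\ast}=2\|\phi\|_{X^\ast}$. Proposition~\ref{prop=dualQN} shows that the pairing $([\muf],c)\mapsto\muf(c)$ identifies $(\ChGN,\|\cdot\|_{\hhh})^\ast$ isometrically and surjectively with $(\QQQ(\Ng)^{\Gg}/\HHH^1(\Ng)^{\Gg},\DD_{\Ng})$. The same pairing therefore identifies $(\ChGN,\scl_{\Gg,\Ng})^\ast$ isometrically with the same space normed by $2\DD_{\Ng}$. This is the claimed duality pairing.

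I expect no serious obstacle. The only delicate point is ensuring that the evaluation $\muf(c)$ on the class of $c$ in $\ChGN$ agrees with Theorem~\ref{thm=gmBavard}'s evaluation on the chain itself, which the discussion before Proposition~\ref{prop=dualQN} provides. The substance of the argument lies in Theorem~\ref{thm=gmBavard} and Proposition~\ref{prop=dualQN}.
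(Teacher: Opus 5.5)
Your proposal is correct and follows essentially the same route as the paper. Part (1) comes from combining Theorem~\ref{thm=gmBavard} with \eqref{eq=HBh}; part (2) uses density of $\mathcal{C}^{\hhh}_{\QQ}(\Gg,\Ng)$ to get $\scl_{\Gg,\Ng}\equiv\frac{1}{2}\|\cdot\|_{\hhh}$, and the duality then follows from Proposition~\ref{prop=dualQN} after rescaling the norm. Your additional checks, which the paper leaves implicit, are handled correctly: that $\muf(c)$ is well-defined on classes, that $\scl_{\Gg,\Ng}$ descends to $\mathcal{C}^{\hhh}_{\QQ}(\Gg,\Ng)$, and the empty-supremum case.
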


\begin{proof}
By combining Theorem~\ref{thm=gmBavard} with \eqref{eq=HBh}, we obtain (1). For the former assertion of (2), recall that $\mathcal{C}^{\hhh}_{\QQ}(\Gg,\Ng)$ is dense in $(\ChGN,\|\cdot\|_{\hhh})$. Hence, the unique continuous extension $\scl_{\Gg,\Ng}\colon \ChGN\to \RR_{\geq 0}$ satisfies 
\begin{equation}\label{eq=choudo1/2}
\scl_{\Gg,\Ng}\equiv\frac{1}{2}\|\cdot\|_{\hhh}.
\end{equation}
Now, the latter assertion of (2) immediately follows from \eqref{eq=choudo1/2} and Proposition~\ref{prop=dualQN}. This  completes the  proof.
\end{proof}

We remark that Proposition~\ref{prop=dualQN} immediately provides the following corollary, which was previously obtained in \cite[Theorem~7.4]{KKMMM}.

\begin{corollary}\label{cor=QNBanach}
Let $\Gg$ be a group and $\Ng$ its normal subgroup. Then, $(\QQQ(\Ng)^{\Gg}/\HHH^1(\Ng)^{\Gg},\DD_{\Ng})$ is a Banach space.
\end{corollary}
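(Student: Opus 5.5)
The corollary follows from Proposition~\ref{prop=dualQN} together with the standard fact that the continuous dual of any real normed space is a Banach space. Here is the plan.

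First, Proposition~\ref{prop=dualQN} tells us that the pairing \eqref{eq=C^hpairng} is the duality pairing for $(\ChGN,\|\cdot\|_{\hhh})$. By our convention for duality pairings, the map
\[
[\muf]\mapsto \muf(\,\cdot\,)
\]
is then a surjective linear isometry from $(\QQQ(\Ng)^{\Gg}/\HHH^1(\Ng)^{\Gg},\DD_{\Ng})$ onto $(\ChGN,\|\cdot\|_{\hhh})^{\ast}$. Because it is isometric, it is injective: an element of zero $\DD_{\Ng}$-norm maps to the zero functional. This also re-confirms that $\DD_{\Ng}$ is a genuine norm on the quotient.

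Second, the continuous dual $X^{\ast}$ of any real normed space $X$, equipped with the dual norm, is complete. The argument is the usual one. A Cauchy sequence $(\phi_n)$ in $X^{\ast}$ converges pointwise to a linear functional $\phi$, since $\RR$ is complete. The functional $\phi$ is bounded because $\sup_n\|\phi_n\|_{X^{\ast}}<\infty$. Finally, $\|\phi_n-\phi\|_{X^{\ast}}\to0$, obtained by passing to the limit in the Cauchy estimate uniformly over the unit ball of $X$. Note that this step requires no completeness of $X$ itself. That matters here, since $\ChGN$ need not be complete.

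Third, completeness is preserved under surjective linear isometries. Combining the first two steps therefore shows that $(\QQQ(\Ng)^{\Gg}/\HHH^1(\Ng)^{\Gg},\DD_{\Ng})$ is a Banach space.

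There is no real obstacle in this argument: all the substantive work sits in Proposition~\ref{prop=dualQN}. The only point needing care is that the identification is genuinely isometric and surjective onto the full continuous dual. That is exactly what "duality pairing" means in Subsection~\ref{subsec=FA}.
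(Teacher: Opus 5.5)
Your proposal is correct and is the paper's argument: Proposition~\ref{prop=dualQN} identifies $(\QQQ(\Ng)^{\Gg}/\HHH^1(\Ng)^{\Gg},\DD_{\Ng})$ isometrically with the continuous dual of the possibly incomplete normed space $(\ChGN,\|\cdot\|_{\hhh})$, and the dual of any normed space is complete (the paper cites Rudin for this). One minor point: an isometric linear map out of the quotient is injective only because $\DD_{\Ng}$ is already known to be a norm there (a homogeneous quasimorphism of zero defect is a homomorphism), so your aside that the isometry ``re-confirms'' this is circular, though it plays no role in the proof.
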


\begin{proof}
By Proposition~\ref{prop=dualQN}, this space is the continuous dual of a real normed space,  and hence is a Banach space (see \cite[Theorem 4.1]{Rudin}). 
\end{proof}

Let $\Gg$ be a group and $\Ng$ its normal subgroup. Recall from Subsection~\ref{subsec=resultsCK} that $i\colon \Ng\hookrightarrow \Gg$ induces an $\RR$-linear map
\[
\widehat{i^{\ast}}\colon (\QQQ(\Gg)/\HHH^1(\Gg),\DD_{\Gg})\to (\QQQ(\Ng)^{\Gg}/\HHH^1(\Ng)^{\Gg},\DD_{\Ng});\quad [\phf]\mapsto [\phf|_{\Ng}],
\]
and that the quotient $\RR$-linear space $\WGN$, defined in Definition~\ref{defn=W}, can be regarded as the (algebraic) cokernel of $\widehat{i^{\ast}}$.

\begin{proposition}\label{prop=findim}
If $\dim_{\RR}\WGN$ is finite, then $\widehat{i^{\ast}}(\QQQ(\Gg)/\HHH^1(\Gg))$ is norm-closed in $(\QQQ(\Ng)^{\Gg}/\HHH^1(\Ng)^{\Gg},\DD_{\Ng})$.
\end{proposition}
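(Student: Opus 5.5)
The plan is to apply Corollary~\ref{cor=findim} to the bounded operator $S=\widehat{i^{\ast}}$. Corollary~\ref{cor=findim} needs both spaces to be Banach. The target $(\QQQ(\Ng)^{\Gg}/\HHH^1(\Ng)^{\Gg},\DD_{\Ng})$ is Banach by Corollary~\ref{cor=QNBanach}. The domain $(\QQQ(\Gg)/\HHH^1(\Gg),\DD_{\Gg})$ is Banach by the same corollary applied to the pair $(\Gg,\Gg)$, using $\QQQ(\Gg)^{\Gg}=\QQQ(\Gg)$ and $\HHH^1(\Gg)^{\Gg}=\HHH^1(\Gg)$ from Lemma~\ref{lem=quasiinv}. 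As noted in Subsection~\ref{subsec=resultsCK}, $\widehat{i^{\ast}}$ is a bounded linear operator with operator norm at most $1$, since restriction cannot increase the defect.

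The algebraic cokernel $Y/S(X)$ is precisely $\WGN$: quotienting $\QQQ(\Ng)^{\Gg}/\HHH^1(\Ng)^{\Gg}$ by the image of $\widehat{i^{\ast}}$ is the same as quotienting $\QQQ(\Ng)^{\Gg}$ by $\HHH^1(\Ng)^{\Gg}+i^{\ast}\QQQ(\Gg)$. By hypothesis this space is finite-dimensional, so Corollary~\ref{cor=findim} shows that $S(X)=\widehat{i^{\ast}}(\QQQ(\Gg)/\HHH^1(\Gg))$ is norm-closed, which is the claim.

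There is no real obstacle here. The only point needing care is completeness of both spaces, since Corollary~\ref{cor=findim} genuinely uses the open mapping theorem (compare Remark~\ref{rem=complete}). This is supplied by Corollary~\ref{cor=QNBanach}, which rests on the duality of Proposition~\ref{prop=dualQN}. Alternatively, for the domain one could use Proposition~\ref{prop_Q_exact}, Theorem~\ref{H2b_Banach} and Lemma~\ref{lem=eqnorms}, identifying $\QQQ(\Gg)/\HHH^1(\Gg)$ with the closed subspace $\Ker(c^2_{\Gg})$ of $\HHH^2_b(\Gg)$ under an equivalent norm.
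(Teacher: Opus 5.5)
Your proposal is correct and is essentially the paper's own proof. Both spaces are Banach by Corollary~\ref{cor=QNBanach}, applied to $(\Gg,\Gg)$ and to $(\Gg,\Ng)$; the algebraic cokernel of $\widehat{i^{\ast}}$ is $\WGN$; and Corollary~\ref{cor=findim} then gives norm-closedness.

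Your alternative route for the domain also works, but calling $\Ker(c^2_{\Gg})$ closed needs a short justification. By universal coefficients over $\RR$, it is the common kernel of the continuous functionals on $\HHH^2_b(\Gg)$ given by evaluation on real $2$-cycles.
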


\begin{proof}
Apply Corollary~\ref{cor=QNBanach} for the case where $(\Gg,\Ng)=(\Gg,\Gg)$ and $(\Gg,\Ng)=(\Gg,\Ng)$, respectively. Then, $(\QQQ(\Gg)/\HHH^1(\Gg),\DD_{\Gg})$ and $(\QQQ(\Ng)^{\Gg}/\HHH^1(\Ng)^{\Gg},\DD_{\Ng})$ are both Banach spaces. Now, Corollary~\ref{cor=findim} ends our proof.
\end{proof}

\begin{remark}\label{rem=coarsehom}
As we discussed in Subsection~\ref{subsec=coarsekernel}, a genuine bi-invariant metric $d^+_{\scl_{\Gg,\Ng}}$ is defined on $\CGN$ by \eqref{eq=d+scl}. Since $(\CGN,d^+_{\scl_{\Gg,\Ng}})$ is coarsely equivalent to $(\CGN,d_{\scl_{\Gg,\Ng}})$, we may regard $(\CGN,d_{\scl_{\Gg,\Ng}})$ as a coarse group. By Lemma~\ref{lem=CGNchain} we can define a map
\[
(\CGN,\;\cdot\;,d_{\scl_{\Gg,\Ng}})\to (\mathcal{C}^{\hhh}(\Gg,\Ng),+,\sclGN);\quad \yl\mapsto \yl,
\]
which is not a genuine group homomorphism in general. Nevertheless, this is a coarse homomorphism. Indeed, by Theorem~\ref{thm=gmBavard}, for all $\yl_1,\yl_2\in \Ng$ we have
\[
\sclGN(\yl_1+\yl_2-\yl_1\cdot \yl_2)\leq\frac{1}{2}.
\]
Since this map has the trivial (bounded) coarse kernel, in this manner we may regard $(\CGN,d_{\scl_{\Gg,\Ng}})$ as a coarse subgroup of $(\mathcal{C}^{\hhh}(\Gg,\Ng),\sclGN)$.
\end{remark}

\section{Proofs of the main theorems}\label{sec=proofmain}
In this section, we prove our main theorems. First, we show Theorem~\ref{mthm=main_Qcoarsekernel}; then we proceed to the proof of Theorem~\ref{mthm=main_biLip}. In addition, we prove Theorem~\ref{thm=cCKextend}.

The basic strategy here is to apply functional analysis on the spaces of chains. More precisely, we employ Banach's closed range theorem (Theorem~\ref{thm=Banach}). The refined version of the generalized mixed Bavard duality theorem, Theorem~\ref{thm=refinedgmBavard}, plays a key role here. Recall from Remark~\ref{rem=complete} that the assumption of completeness of $X$ and $Y$ is essential in Theorem~\ref{thm=Banach}. Hence, the quotient normed space $(\ChGN,\sclGN)$ appearing in Theorem~\ref{thm=refinedgmBavard} itself may \emph{not} be suited for our purpose, because $\mathcal{C}_{\RR}(\Gg,\Ng)$ is a subspace of $\CCC_1(\Ng;\RR)$, which only deals with finite sums. For this reason, we enlarge the normed space $(\ChGN,\sclGN)$ by taking its completion $(\barChGN,\barsclGN)$, so that Theorem~\ref{thm=Banach} applies. The price to pay here is the fact that $\barChGN$ is a huge and mysterious space: we will discuss this point in Remark~\ref{rem=ghost}. Nevertheless, in the setting of coarse geometry and coarse groups, $\ChGN$ is dense in $\barChGN$ and so $\ChGN$ is asymptotic to $\barChGN$ (recall Definition~\ref{defn=asymptotic}). Roughly speaking, through this process, the kernel of a bounded linear operator at the level of $\barChGN$ is approximated to be a \emph{coarse kernel} at the level of $\mathcal{C}^{\hhh}_{\QQ}(\Gg,\Ng)$; compare with  Theorem~\ref{prop=CKBanach}.  

This is the rough strategy for the proof of Theorem~\ref{mthm=main_Qcoarsekernel}. To obtain Theorem~\ref{mthm=main_biLip}, we are requested to take an approximation at the level of $\CGN$. At the moment, we are able to obtain a sequence $(\yl_n)_{n\in \NN}$ that witnesses the non-bi-Lipschitz equivalence between $\sclGN$ and $\sclG$ when $\WGN\ne 0$; see also Example~\ref{exa=iotakernel}.

\subsection{Proof of Theorem~\ref{mthm=main_Qcoarsekernel}}
The inclusion map $i\colon \Ng\hookrightarrow \Gg$ induces an $\RR$-linear map
\begin{equation}\label{eq=Riota}
\iota_{\RR}\colon (\mathcal{C}_{\RR}(\Gg,\Ng),\sclGN)\to (\mathcal{C}_{\RR}(\Gg),\sclG);\quad c\mapsto c.
\end{equation}
In relation to Theorem~\ref{thm=Banach}, a better point of view for \eqref{eq=Riota} may be to go to the level of $\ChGN$, because $\sclGN$ is a genuine norm at this level.  To summarize,  from $i\colon \Ng\hookrightarrow \Gg$ we consider
\[
\iota=\iota_{\RR}^{\hhh}\colon (\ChGN,\sclGN)\to (\ChG,\sclG);\quad [c]\mapsto  [\iota_{\RR}(c)]  =[c].
\]
Then, $\|\iota\|_{\mathrm{op}}\leq 1$. The restriction of $\iota$ to $\mathcal{C}^{\hhh}_{\QQ}$ gives rise to a map $\iota_{\QQ}^{\hhh}\colon (\mathcal{C}^{\hhh}_{\QQ}(\Gg,\Ng),\sclGN)\to (\mathcal{C}^{\hhh}_{\QQ}(\Gg),\sclG)$; 
$\iota_{\QQ}^{\hhh}$ can be seen as a coarse homomorphism between additive groups (compare with Example~\ref{exa=coarsegroup}). Actually, Theorem~\ref{mthm=main_Qcoarsekernel} is a statement on a coarse kernel of $\iota_{\QQ}^{\hhh}$. 

As is mentioned at the beginning of this section, the key procedure to our proof is to take the completions of $(\ChGN,\sclGN)$ and $(\ChG,\sclG)$, respectively. 
Write $(\barChGN,\barsclGN)$ and $(\barChG,\barsclG)$ for the completions of $(\ChGN,\sclGN)$ and $(\ChG,\sclG)$, respectively. Then, $\iota=\iota_{\RR}^{\hhh}$ admits a unique continuous extension
\[
\overline{\iota}\colon (\barChGN,\barsclGN)\to (\barChG,\barsclG),
\]
which satisfies $\|\overline{\iota}\|_{\mathrm{op}}\leq 1$. The duality pairing in Theorem~\ref{thm=refinedgmBavard}~(2) admits a unique continuous extension
\begin{equation}\label{eq=barduality}
\Big(\QQQ(\Ng)^{\Gg} / \HHH^1(\Ng)^{\Gg},2\DD_{\Ng}\Big)\times \big(\barChGN,\barsclGN\big)\to \RR;\quad ([\muf],\alpha)\mapsto \muf(\alpha),
\end{equation}
which is the duality pairing for $(\barChGN,\barsclGN)$. In particular, together with Proposition~\ref{prop=FA}~(1), for every $\alpha\in \barChGN$ we have
\begin{equation}\label{eq=barBavard}
\barsclGN(\alpha)=\sup_{[\muf]\in (\QNG/\HHH^1(\Ng)^{\Gg})\setminus \{0\}}\frac{|\muf(\alpha)|}{2\DD_{\Ng}(\muf)}.
\end{equation}

\begin{proof}[Proof of Theorem~\ref{mthm=main_Qcoarsekernel}]
By Theorem~\ref{thm=refinedgmBavard} (and \eqref{eq=barduality}), we have
\[
(\barChGN,\barsclGN)^{\ast}\cong (\QNG/\HHH^1(\Ng)^{\Gg},2\DD_{\Ng}).
\]
Considering the case where $\Ng=\Gg$, we have $(\barChG,\barsclG)^{\ast}\cong (\QQQ(\Gg)/\HHH^1(\Gg),2\DD_{\Gg})$ as well. 
Recall from Subsection~\ref{subsec=resultsCK} that $i\colon \Ng\hookrightarrow \Gg$ also induces an $\RR$-linear map
\[
\widehat{i^{\ast}}\colon \QQQ(\Gg)/\HHH^1(\Gg)\to \QNG/\HHH^1(\Ng)^{\Gg};\quad [\phi]\mapsto [\phi|_{\Ng}],
\]
and $\WGN$ is the (algebraic) cokernel of $\widehat{i^{\ast}}$.
The setup up to this point is illustrated in the following diagram, where the vertical arrows indicate the duality. Here, the equality $\widehat{i^{\ast}}=(\overline{\iota})^{\dagger}$ will be checked below.
\begin{equation}\label{eq=diagramscl}
    \begin{tikzpicture}[auto, baseline=(current bounding box.center)]
    \node[align=center] (01) at (0, 1.5) { $(\barChGN,\barsclGN)$ };
    \node[align=center] (11) at (5, 1.5) { $(\barChG,\barsclG)$ };
    \node[align=center] (00) at (0, 0) { $(\QNG/\HHH^1(\Ng)^{\Gg},2\DD_{\Ng})$ };
    \node[align=center] (10) at (5, 0) { $(\QQQ(\Gg)/\HHH^1(\Gg),2\DD_{\Gg})$ };
    \draw[->, decorate, decoration={snake}] (01) to node { } (00);
    \draw[->] (01) to node { $\scriptstyle \overline{\iota}$ } (11);
    \draw[->, decorate, decoration={snake}] (11) to node { } (10);
    \draw[<-] (00) to node { $\scriptstyle \widehat{i^{\ast}}=(\overline{\iota})^{\dagger}$ } (10);
    \end{tikzpicture}
\end{equation}

Note that $\mathcal{C}^{\hhh}_{\QQ}(\Gg,\Ng)$ is dense in $(\barChGN,\barsclGN)$ because it is dense in $(\ChGN,\sclGN)$. In particular, $\mathcal{C}^{\hhh}_{\QQ}(\Gg,\Ng)\asymp \barChGN$ in $(\barChGN,\barsclGN)$. Hence, $(\mathcal{C}^{\hhh}_{\QQ}(\Gg,\Ng),\sclGN)$ and $(\barChGN,\barsclGN)$ are isomorphic as coarse groups, and $(\mathcal{C}^{\hhh}_{\QQ}(\Gg),\sclG)$ and $(\barChG,\barsclG)$ are also isomorphic; a coarse kernel of $\iota_{\QQ}^{\hhh}$ is isomorphic to that of $\overline{\iota}$, under the existence of (at least one of) them. More concretely, suppose that a coarse kernel $\Omega$ of $\overline{\iota}$ exists. Then, since $\mathcal{C}^{\hhh}_{\QQ}(\Gg,\Ng)$ is dense in $(\barChGN,\barsclGN)$, there exists $A\subseteq \mathcal{C}^{\hhh}_{\QQ}(\Gg,\Ng)$ such that $A\asymp \Omega$ in $(\barChGN,\barsclGN)$. This $A$ is a coarse kernel of $\iota_{\QQ}^{\hhh}$. Thus, the problem reduces to study a coarse kernel of $\overline{\iota}$. 

We now apply  Theorem~\ref{prop=CKBanach}  to $\overline{\iota}$, and obtain the following.
\begin{enumerate}[label=(\alph*)]
  \item A coarse kernel of $\overline{\iota}$ exists if and only if $\overline{\iota}(\barChGN)$ is norm-closed in $(\barChG,\barsclG)$.
  \item If one of the two (equivalent) conditions of (a) is fulfilled, then $\Ker(\overline{\iota})$ is a coarse kernel of $\overline{\iota}$.
\end{enumerate}

In what follows, we show that $(\overline{\iota})^{\dagger}=\widehat{i^{\ast}}$, which is claimed in diagram \eqref{eq=diagramscl}. This claim is equivalent to saying that $\iota^{\dagger}=\widehat{i^{\ast}}$, and we will show this equality. By Theorem~\ref{thm=refinedgmBavard},
\begin{align*}
\langle \cdot ,\cdot\rangle_{\Gg,\Ng}&\colon\Big(\QQQ(\Ng)^{\Gg} / \HHH^1(\Ng)^{\Gg},2\DD_{\Ng}\Big)\times (\ChGN,\scl_{\Gg,\Ng})\to \RR;\quad \langle [\muf],c\rangle_{\Gg,\Ng}=\muf(c),\\
\langle \cdot ,\cdot\rangle_{\Gg}&\colon\Big(\QQQ(\Gg) / \HHH^1(\Gg),2\DD_{\Gg}\Big)\times (\ChG,\scl_{\Gg})\to \RR;\quad \langle [\phf],c\rangle_{\Gg}=\phf(c)
\end{align*}
are both duality pairings. Then for every $[\phf]\in \QQQ(\Gg) / \HHH^1(\Gg)$ and every $c\in \mathcal{C}_{\RR}(\Gg,\Ng)$, viewed as $[c]\in \ChGN$, we have
\[
\langle \widehat{i^{\ast}}[\phf],[c]\rangle_{\Gg,\Ng}=\langle [\phf|_{\Ng}],[c]\rangle_{\Gg,\Ng}=\phf|_{\Ng}(c)
=\phf(c)=\langle [\phf],\iota[c]\rangle_{\Gg},
\]
as desired. Here, recall that $c\in \mathcal{C}_{\RR}(\Gg,\Ng)\subseteq \CCC_1(\Ng;\RR)$. 

Now, we employ Theorem~\ref{thm=Banach} for the setting of \eqref{eq=diagramscl} with $S=\overline{\iota}$. Then, we in particular have the following.
\begin{enumerate}
  \item[(c)] $\overline{\iota}(\barChGN)$ is norm-closed in $(\barChG,\barsclG)$ if and only if $\widehat{i^{\ast}}(\QQQ(\Gg)/\HHH^1(\Gg))$ is norm-closed in $(\QQQ(\Ng)^{\Gg}/\HHH^1(\Ng)^{\Gg},2\DD_{\Ng})$.
  \item[(d)] If one of the two (equivalent) conditions of (c) is fulfilled, then $\widehat{i^{\ast}}(\QQQ(\Gg)/\HHH^1(\Gg))= \Ker(\overline{\iota})^{\perp}$.
\end{enumerate}

By the discussion at the beginning of this proof, (a) and (c) end the proof of (1). Now, we proceed to the proof of (2), and  assume that $\widehat{i^{\ast}}(\QQQ(\Gg)/\HHH^1(\Gg))$ is norm-closed in $(\QQQ(\Ng)^{\Gg}/\HHH^1(\Ng)^{\Gg},2\DD_{\Ng})$. Then, by (d) we have $\widehat{i^{\ast}}(\QQQ(\Gg)/\HHH^1(\Gg))= \Ker(\overline{\iota})^{\perp}$.
Then, by setting $U=\Ker(\overline{\iota})$ in Lemma~\ref{lem=dualquotient}~(1) we obtain the isometric isomorphism
\begin{equation}\label{eq=kernelcokernel}
\WGN\cong \Ker(\overline{\iota})^{\ast}.
\end{equation}
Here, $\WGN=\Coker(\widehat{i^{\ast}})$ is equipped with the quotient norm coming from $(\QQQ(\Ng)^{\Gg}/\HHH^1(\Ng)^{\Gg},2\DD_{\Ng})$. 
To adjust this difference to our norm on $\WGN$ in the statement of Theorem~\ref{mthm=main_Qcoarsekernel}, set $V=(\Ker(\overline{\iota}),2\barsclGN)$. This completes our proof of the former assertion of (2). To see the latter assertion of (2), assume that $\WGN$ is finite-dimensional. Then, the condition of (c) is satisfied by Corollary~\ref{cor=findim}. Also, $V$ is finite-dimensional since $\Rdim\WGN<\infty$. In particular, $V$ is reflexive, and we have the isometric isomorphisms $V=V^{\ast\ast}\cong \WGN^{\ast}$, as desired.
\end{proof}

\subsection{Proof of Theorem~\ref{mthm=main_biLip}}
\relax Here, we prove Theorem~\ref{mthm=main_biLip}. The new task  unseen in the proof of Theorem~\ref{mthm=main_Qcoarsekernel} is to approximate elements in $\Ker(\overline{\iota})$ by group elements in $\CGN$. It is unclear whether the actural approximation is possible in general, but we can always take an approximation \emph{with scaling-up}, as in the following lemma.

\begin{lemma}[approximation with scaling-up]\label{lem=approximation_scaling}
Let $\alpha\in \barChGN$ and $\varepsilon \in \RR_{>0}$. Then, there exist $k\in \NN$ and $\yl\in \CGN$ such that $\barsclGN(k\alpha-\yl)\leq \varepsilon k$.
\end{lemma}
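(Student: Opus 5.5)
The plan is a three-step approximation: first by a rational chain, then by an integral chain after scaling, then by a single group element of $\CGN$. Only the last step needs a new idea.

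\emph{Step 1 (rational approximation).} Since $\mathcal{C}^{\hhh}_{\QQ}(\Gg,\Ng)$ is dense in $(\barChGN,\barsclGN)$, I pick $c\in\mathcal{C}_{\QQ}(\Gg,\Ng)$ with $\barsclGN(\alpha-c)\leq \varepsilon/2$. I then choose $k_0\in\NN$ with $k_0c\in\CZ(\Gg,\Ng)$. By Lemma~\ref{lem=CGNchain}, $k_0c$ can be written as
\[
k_0c=\xl_1+\cdots+\xl_m-\xbr_1-\cdots-\xbr_n
\]
with $\xl_1\cdots\xl_m\xbr_1^{-1}\cdots\xbr_n^{-1}\in\CGN$.

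\emph{Step 2 (from an integral chain to a group element, after a further power).} For $\ell\in\NN$, I set $k=k_0\ell$ and consider the chain $\ell k_0 c$. Modulo $\overline{\mathrm{h}(\Gg,\Ng)}$ it equals
\[
\xl_1^{\ell}+\cdots+\xl_m^{\ell}-\xbr_1^{\ell}-\cdots-\xbr_n^{\ell},
\]
since each $\xl^\ell-\ell\xl$ lies in $\mathrm{h}$. I then take
\[
\yl_\ell=\xl_1^{\ell}\cdots\xl_m^{\ell}\,\xbr_1^{-\ell}\cdots\xbr_n^{-\ell}.
\]
To see that $\yl_\ell\in\CGN$, note that $\Ng/\CGN$ is central in $\Gg/\CGN$; more importantly, it is abelian, because $[\Ng,\Ng]\subseteq\CGN$. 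Hence $\yl_\ell\equiv(\xl_1\cdots\xbr_n^{-1})^{\ell}\equiv 1$ modulo $\CGN$.

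\emph{Step 3 (the key estimate).} By Theorem~\ref{thm=refinedgmBavard}~(1) and the Bavard-type formula \eqref{eq=barBavard}, I need to bound $|\muf(\ell k_0c-\yl_\ell)|$ uniformly in $\ell$ by a multiple of $\DD_{\Ng}(\muf)$. The chain
\[
\xl_1^{\ell}+\cdots-\xbr_n^{\ell}-\yl_\ell
\]
is a difference between a sum of $m+n$ elements and their product. Applying the quasimorphism inequality $m+n-1$ times, together with $\muf(\xbr^{-\ell})=-\muf(\xbr^{\ell})$ from homogeneity, gives $|\muf(\cdot)|\leq(m+n-1)\DD_{\Ng}(\muf)$. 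Therefore
\[
\sclGN(\ell k_0c-\yl_\ell)\leq\frac{m+n-1}{2},
\]
a bound independent of $\ell$. (Equivalently, I can invoke the estimate $\sclGN(\yl_1+\yl_2-\yl_1\yl_2)\leq 1/2$ from Remark~\ref{rem=coarsehom} repeatedly.) Combining the three steps,
\[
\barsclGN(k\alpha-\yl_\ell)\leq k\,\frac{\varepsilon}{2}+\frac{m+n-1}{2},
\]
and choosing $\ell$ large enough makes the constant term at most $k\varepsilon/2$.

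The main obstacle is Step 3, the uniform-in-$\ell$ bound. The point to verify carefully is that the chain $\ell k_0c-\yl_\ell$ actually lies in $\mathcal{C}_{\QQ}(\Gg,\Ng)$, so that the duality formula applies, and that its value in $\ChGN$ agrees with the formal manipulation above. The first follows from Lemma~\ref{lem=CGNchain}. The second holds because we quotient by the closure of $\mathrm{h}(\Gg,\Ng)$ and the pairing with $\muf$ vanishes on $\mathrm{h}$.
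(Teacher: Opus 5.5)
Your proposal is correct and follows the paper's proof essentially step by step. The steps are: approximate by a rational chain $c$ with $\barsclGN(\alpha-c)\leq\varepsilon/2$; clear denominators and apply Lemma~\ref{lem=CGNchain}; take $\ell$-th powers of the $\xl_i,\xbr_j$ to form $\yl\in\CGN$, using that $\Ng/\CGN$ is abelian; bound $\sclGN(kc-\yl)$ via Theorem~\ref{thm=gmBavard} by a constant independent of $\ell$; and let $\ell$ grow. The only difference is cosmetic: you obtain the constant $(m+n-1)/2$, where the paper uses $(m+m')/2$.
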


\begin{proof}
Since $\mathcal{C}^{\hhh}_{\QQ}(\Gg,\Ng)$ (Definition~\ref{defn=ChGN}) is dense in $(\barChGN,\barsclGN)$, we can take $c\in \mathcal{C}_{\QQ}(\Gg,\Ng)$ such that $\barsclGN(\alpha-c)\leq \varepsilon/2$. Since $c\in \mathcal{C}_{\QQ}(\Gg,\Ng)$, there exists $l\in \NN$ such that $lc\in \mathcal{C}_{\ZZ}(\Gg,\Ng)$. By Lemma~\ref{lem=CGNchain}, there exist $m,m'\in \ZZ_{\geq 0}$ and $\xl_1, \cdots, \xl_{m}, \xbr_1, \cdots, \xbr_{m'} \in \Ng$ such that
\[
lc = \xl_1 + \cdots + \xl_{m} - \xbr_1 - \cdots - \xbr_{m'}
\]
and $\xl_1\cdots \xl_{m} \xbr_1^{-1}\cdots \xbr_{m'}^{-1}\in \CGN$. Take $t \in \NN$ with $m+m'\leq \varepsilon lt$. Finally, set 
\[
k=lt\in \NN\quad \mathrm{and}\quad  \yl=\xl_1^{t}\cdots \xl_{m}^{t} \xbr_1^{-t}\cdots \xbr_{m'}^{-t}.
\]
Note that $\yl\in \CGN$ because $\Ng/\CGN$ is abelian. We claim that
\begin{equation}\label{eq=sclapprox}
\sclGN(kc-\yl)\leq (m+m')/2\leq \varepsilon k/2.
\end{equation}
Indeed, for every $\muf\in \QQQ(\Ng)^{\Gg}$, we have 
\[
\muf(kc)=\muf(t\xl_1 + \cdots + t\xl_{m} - t\xbr_1 - \cdots - t\xbr_{m'})\sim_{(m+m')\DD_{\Ng}(\muf)}\muf(\yl).
\]
Then, the generalized mixed Bavard duality theorem (Theorem~\ref{thm=gmBavard})  implies \eqref{eq=sclapprox}. By \eqref{eq=sclapprox}, we now have $\barsclGN(k\alpha -\yl)\leq k\cdot \barsclGN(\alpha -c)+\barsclGN(kc-\yl)\leq \varepsilon k$.
\end{proof}

\begin{proof}[Proof of Theorem~\ref{mthm=main_biLip}]
First we prove ``(ii) implies (i)'' in the current framework, despite the fact that this direction was already proved in \cite{KKMMM}. Assume that $\WGN=0$. Then, in particular, $\widehat{i^{\ast}}(\QQQ(\Gg)/\HHH^1(\Gg))$ is norm-closed in $(\QQQ(\Ng)^{\Gg}/\HHH^1(\Ng)^{\Gg},2\DD_{\Ng})$. Hence, Theorem~\ref{thm=Banach}, together with Lemma~\ref{lem=dualquotient}, implies  \eqref{eq=kernelcokernel}. In the current case of $\WGN=0$, this implies that $\Ker(\overline{\iota})=0$. 
Also, Theorem~\ref{thm=Banach} implies that $\overline{\iota}(\barChGN)$ is norm-closed in $(\barChGN,\barsclG)$. Regard $\overline{\iota}$ as a bounded linear operator from $(\barChGN,\barsclGN)$ to $(\overline{\iota}(\barChGN),\barsclG)$. Then $(\overline{\iota}(\barChGN),\barsclG)$ is a Banach space, and  this bounded operator $\overline{\iota}$  is injective. Therefore, Proposition~\ref{prop=FA} applies to   $\overline{\iota}$,  and there exists $C\in \RR_{\geq 0}$ such that for every $\alpha\in \barChGN$,
\[
\barsclGN(\alpha)\leq C\cdot \barsclG(\alpha).
\]
By recalling Lemma~\ref{lem=CGNchain}, we obtain (i).

Now, we proceed to the proof of the converse, which is the main part of this proof. As is mentioned in the introduction, we will prove that ``not (ii) implies not (i).'' Assume that $\WGN\ne 0$. We will divide our proof into two cases, depending on whether  $\widehat{i^{\ast}}(\QQQ(\Gg)/\HHH^1(\Gg))$ is norm-closed in $(\QQQ(\Ng)^{\Gg}/\HHH^1(\Ng)^{\Gg},2\DD_{\Ng})$.

First, we treat the case where $\widehat{i^{\ast}}(\QQQ(\Gg)/\HHH^1(\Gg))$ is norm-closed in $(\QQQ(\Ng)^{\Gg}/\HHH^1(\Ng)^{\Gg},2\DD_{\Ng})$. Then, we have \eqref{eq=kernelcokernel}. Since $\WGN\ne 0$, we in particular have $\Ker(\overline{\iota})\ne 0$. Therefore, we can take $\alpha\in \Ker(\overline{\iota})$ with $\barsclGN(\alpha)=1$. For $n\in \NN$, set $\alpha_n=n\alpha$. Then for every $n\in \NN$ we have
\begin{equation}\label{eq=alpha}
\barsclGN(\alpha_n)=n\quad \mathrm{and}\quad \barsclG(\alpha_n)=0.
\end{equation}
\relax Let $n\in \NN$. Apply Lemma~\ref{lem=approximation_scaling} for $\varepsilon=1$. Then, we have $k_n\in \NN$ and $\yl_n\in \CGN$ such that $\barsclGN(k_n\alpha-\yl_n)\leq k_n$. By \eqref{eq=alpha}, we have
\[
\sclGN(\yl_n)\geq (n-1)k_n\quad \mathrm{and}\quad \sclG(\yl_n)\leq k_n.
\]
Here, recall that $\barsclG\leq \barsclGN$ on $\barChGN$. Then, $(\yl_n)_{n\in \NN}$ witnesses ``not (i)'':
\begin{equation}\label{eq=limity_n}
\lim_{n\to \infty}\dfrac{\sclGN(\yl_n)}{\sclG(\yl_n)}=\infty.
\end{equation}

Secondly, we deal with the remaining case, where $\widehat{i^{\ast}}(\QQQ(\Gg)/\HHH^1(\Gg))$ is not norm-closed in $(\QQQ(\Ng)^{\Gg}/\HHH^1(\Ng)^{\Gg},2\DD_{\Ng})$. Then, by Theorem~\ref{thm=Banach},  $\overline{\iota}(\barChGN)$ is not norm-closed in $(\barChGN,\barsclG)$. In particular, there exists a sequence $(\alpha_n)_{n\in \NN}$ in $\barChGN$ such that for every $n\in \NN$,
\[
\barsclGN(\alpha_n)\geq n\quad \mathrm{and}\quad \barsclG(\alpha_n)=1;
\]
compare with the argument in the proof of  Theorem~\ref{prop=CKBanach}.   \relax Again by Lemma~\ref{lem=approximation_scaling}, we can construct a sequence $(\yl_n)_{n\in \NN}$ in $[\Gg,\Ng]$ out of $(\alpha_n)_{n\in \NN}$ such that \eqref{eq=limity_n} holds. Therefore, we have proved that (i) implies (ii), as desired.
\end{proof}

\subsection{Proof of Theorem~\ref{thm=cCKextend}}\label{subsec=extend}
\begin{proof}[Proof of Theorem~\textup{\ref{thm=cCKextend}}]
Recall diagram~\eqref{eq=diagramscl} from the proof of Theorem~\ref{mthm=main_Qcoarsekernel}. By Theorem~\ref{mthm=main_Qcoarsekernel}, the existence of the coarse kernel $A$ implies that $\widehat{i^{\ast}}(\QQQ(\Gg)/\HHH^1(\Gg))$ is norm-closed in $(\QQQ(\Ng)^{\Gg}/\HHH^1(\Ng)^{\Gg},2\DD_{\Ng})$. Then, by Theorem~\ref{thm=Banach}, we have
\[
\widehat{i^{\ast}}(\QQQ(\Gg)/\HHH^1(\Gg))=\Ker(\overline{\iota})^{\perp}.
\]
Therefore, (i) holds if and only if  $\muf(\Ker(\overline{\iota}))=\{0\}$. Since $A\asymp \Ker(\overline{\iota})$ in $(\barChGN,\barsclGN)$, this condition is  equivalent to (ii) by \eqref{eq=barBavard}. This completes the proof.
\end{proof}

\relax At the level of $\CGN$, we have a corresponding result to Theorem~\ref{thm=cCKextend} up to scaling factors, as follows.

\begin{theorem}\label{thm=cCKextendCGN}
Let $\Gg$ be a group and $\Ng$ its normal subgroup. Let $i\colon \Ng\hookrightarrow \Gg$ be the inclusion map. Assume that $\widehat{i^{\ast}}(\QQQ(\Gg)/\HHH^1(\Gg))$ is norm-closed in $(\QQQ(\Ng)^{\Gg}/\HHH^1(\Ng)^{\Gg},\DD_{\Ng})$. Then, there exist a subset $Y$ of $\CGN$ and a map $\lambda\colon Y\to \NN$ such that for every $\muf\in \QNG$, the following are equivalent.
\begin{enumerate}[label=\textup{(\roman*)}]
  \item $[\muf]=0$ in $\WGN$. 
  \item The set $\{\lambda(\yl)^{-1}\muf(\yl)\;|\;\yl \in Y\}$ is a bounded subset of $\RR$.
\end{enumerate}
\end{theorem}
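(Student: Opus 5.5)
We would follow the proof of Theorem~\ref{thm=cCKextend}, but replace the coarse kernel $\Ker(\overline{\iota})$ by group elements obtained through Lemma~\ref{lem=approximation_scaling}. By the closed-range assumption and Theorem~\ref{thm=Banach} applied to $S=\overline{\iota}$ in diagram~\eqref{eq=diagramscl}, we have $\widehat{i^{\ast}}(\QQQ(\Gg)/\HHH^1(\Gg))=\Ker(\overline{\iota})^{\perp}$. Hence $[\muf]=0$ in $\WGN$ if and only if $\muf(\alpha)=0$ for every $\alpha\in\Ker(\overline{\iota})$. By homogeneity of the kernel, this holds if and only if $\sup\{|\muf(\alpha)|\;|\;\alpha\in\Ker(\overline{\iota}),\ \barsclGN(\alpha)\leq 1\}$ is finite.

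Next we build $Y$ and $\lambda$. For each $\alpha$ in the unit ball $K_1$ of $\Ker(\overline{\iota})$ with respect to $\barsclGN$, apply Lemma~\ref{lem=approximation_scaling} with $\varepsilon=1$. This gives $k_\alpha\in\NN$ and $\yl_\alpha\in\CGN$ with $\barsclGN(k_\alpha\alpha-\yl_\alpha)\leq k_\alpha$. Set $Y=\{\yl_\alpha\}$ and $\lambda(\yl_\alpha)=k_\alpha$. If one $\yl$ arises from several $\alpha$, choose one of them; such a choice of a single $\alpha$ per $\yl$ suffices for the argument below.

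For (i)$\Rightarrow$(ii): if $[\muf]=0$ in $\WGN$, then $\muf$ vanishes on $\Ker(\overline{\iota})$, so $\muf(\yl_\alpha)=\muf(\yl_\alpha-k_\alpha\alpha)$. Using \eqref{eq=barduality}, namely $|\muf(\beta)|\leq 2\DD_{\Ng}(\muf)\barsclGN(\beta)$, we get $|\muf(\yl_\alpha)|\leq 2\DD_{\Ng}(\muf)k_\alpha$. So $\lambda(\yl)^{-1}\muf(\yl)$ is bounded by $2\DD_{\Ng}(\muf)$. Here $\muf(\yl)$ is the value of the homogeneous quasimorphism on the group element $\yl$, which agrees with the pairing on its chain class.

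For (ii)$\Rightarrow$(i): the same estimate gives $|\muf(\alpha)-k_\alpha^{-1}\muf(\yl_\alpha)|\leq 2\DD_{\Ng}(\muf)$. Therefore boundedness of $\{\lambda(\yl)^{-1}\muf(\yl)\}$ yields $\sup_{\alpha\in K_1}|\muf(\alpha)|<\infty$. Since $K_1$ is the unit ball of a linear subspace, $\muf(t\alpha)=t\muf(\alpha)$ for all $t$, and so $\muf$ vanishes on $\Ker(\overline{\iota})$. This gives (i) by the first paragraph.

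The main obstacle is exactly this bookkeeping. The error term must be comparable to $\lambda(\yl)$, so that it disappears after normalizing, and $Y$ must be indexed so that every $\alpha\in K_1$ is represented. Indexing by $\alpha$ with a fixed $\varepsilon$ handles both requirements. The only subtle point is that the constant error does not destroy linearity of the kernel; that is resolved by the scaling argument in the last step.
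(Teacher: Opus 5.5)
There is a genuine gap, and it lies in the choice of index set. You build $Y$ only from the unit ball $K_1=\{\alpha\in\Ker(\overline{\iota})\;|\;\barsclGN(\alpha)\leq 1\}$. This rests on your claim that $\muf$ vanishes on $\Ker(\overline{\iota})$ if and only if $\sup_{\alpha\in K_1}|\muf(\alpha)|<\infty$, and that claim is false.

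By the duality pairing \eqref{eq=barduality} (equivalently \eqref{eq=barBavard}), every $\muf\in\QNG$ satisfies
\[
|\muf(\alpha)|\leq 2\DD_{\Ng}(\muf)\,\barsclGN(\alpha)\leq 2\DD_{\Ng}(\muf)
\]
on $K_1$, so this supremum is always finite. Consequently, with your $Y$ and $\lambda$, condition (ii) holds for \emph{every} $\muf$. Indeed, for $\yl=\yl_\alpha$ with the chosen $\alpha$, your own estimate gives $|\lambda(\yl)^{-1}\muf(\yl)|\leq|\muf(\alpha)|+2\DD_{\Ng}(\muf)\leq 4\DD_{\Ng}(\muf)$. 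So (ii)$\Rightarrow$(i) fails as soon as $\WGN\neq 0$.

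The closing scaling remark does not rescue this, because $t\alpha\in K_1$ only for $|t|\leq 1/\barsclGN(\alpha)$. The fix is to index over all of $\Ker(\overline{\iota})$, as the paper does. Hypothesis (ii) then yields boundedness of $\muf$ on the whole linear subspace $\Ker(\overline{\iota})$, which forces $\muf|_{\Ker(\overline{\iota})}=0$.

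There is a second, smaller gap in how you handle repeated values, and your claim that an arbitrary choice of $\alpha$ per $\yl$ suffices is not correct. In (ii)$\Rightarrow$(i) you use $|\muf(\alpha)-k_\alpha^{-1}\muf(\yl_\alpha)|\leq 2\DD_{\Ng}(\muf)$ for every $\alpha$. However, (ii) only controls $\lambda(\yl_\alpha)^{-1}\muf(\yl_\alpha)=k_{\alpha'}^{-1}\muf(\yl_\alpha)$ for the representative $\alpha'$ you happened to choose. If $k_\alpha<k_{\alpha'}$, then $k_\alpha^{-1}|\muf(\yl_\alpha)|$ can exceed the controlled quantity by the factor $k_{\alpha'}/k_\alpha$.

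The paper avoids this as follows. Write $\Phi(\alpha)=\yl_\alpha$ and $\Lambda(\alpha)=k_\alpha$ for all $\alpha\in\Ker(\overline{\iota})$, and set $\lambda(\yl)=\min\{\Lambda(\alpha)\;|\;\alpha\in\Phi^{-1}(\{\yl\})\}$. Then $\Lambda(\alpha)^{-1}|\muf(\Phi(\alpha))|\leq\lambda(\Phi(\alpha))^{-1}|\muf(\Phi(\alpha))|$ for every $\alpha$, which is what (ii)$\Rightarrow$(i) needs. Because the minimum is attained, (i)$\Rightarrow$(ii) still goes through. With these two changes your argument becomes the paper's.
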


\begin{proof}
By Lemma~\ref{lem=approximation_scaling}, we can define two maps $\Phi\colon \Ker(\overline{\iota})\to \CGN$ and $\Lambda\colon \Ker(\overline{\iota})\to \NN$ such that for every $\alpha\in \Ker(\overline{\iota})$, we have 
\begin{equation}\label{eq=Lambda}
\barsclGN(\Lambda(\alpha)\alpha - \Phi(\alpha))\leq \Lambda(\alpha). 
\end{equation}
Note that \eqref{eq=Lambda} is equivalent to the following assertion by \eqref{eq=barBavard}: for every $\muf\in \QNG$, we have
\begin{equation}\label{eq=Lambdamu}
\Lambda(\alpha)^{-1}\muf(\Phi(\alpha))\sim_{2\DD(\muf)} \muf(\alpha).
\end{equation}
We now set $Y$ and $\lambda$ as 
\[
Y=\Phi(\Ker(\overline{\iota}))\quad \mathrm{and}\quad \lambda\colon Y\to \NN;\ \yl\mapsto \min \{\Lambda(\alpha)\;|\;\alpha \in \Phi^{-1}(\{\yl\})\},
\]
respectively. In what follows, we will show that these $Y$ and $\lambda$ work. 

As we have seen in the proof of Theorem~\ref{thm=cCKextend}, under our assumption, condition (i) is equivalent to $\muf(\Ker(\overline{\iota}))=\{0\}$. Assume (i). For every $\yl\in Y$, there exists $\alpha\in \Ker(\overline{\iota})$ such that $\Phi(\alpha)=\yl$ and $\Lambda(\alpha)=\lambda(\yl)$. Then by assumption (i) and \eqref{eq=Lambdamu}, for such $\alpha$ we have
\[
\lambda(\yl)^{-1}\muf(\yl)=\Lambda(\alpha)^{-1}\muf(\Phi(\alpha))\sim_{2\DD_{\Ng}(\muf)} \muf(\alpha)=0.
\]
Hence, we have (ii).

Conversely, assume (ii). Then, the set $\{\Lambda(\alpha)^{-1}\muf(\Phi(\alpha))\;|\;\alpha\in \Ker(\overline{\iota})\}$ is bounded as well by the definition of $\lambda$. Together with \eqref{eq=Lambdamu}, we conclude that the set $\muf(\Ker(\overline{\iota}))$ is bounded. Hence, $\muf(\Ker(\overline{\iota}))=\{0\}$, and we have (i).
\end{proof}

\subsection{Example of an explicit non-zero element of $\Ker(\overline{\iota})$}\label{subsec=remarks}
In our proof of Theorem~\ref{mthm=main_biLip}, we have seen that $\Ker(\overline{\iota})\ne 0$ if $\overline{\iota}(\barChGN)$ is norm-closed in $(\barChG,\barsclG)$ and $\WGN\ne 0$. However, it is another problem to obtain some explicit expression of a non-zero element $\alpha$ in $\Ker(\overline{\iota})$. In this subsection, we exhibit an example of such an expression of $\alpha$ for a certain group pair $(\Gg,\Ng)=(\Hg,\Kg)$. Let $\overline{\BBB}'_1(\Gg,\Ng;\RR)$ be the completion of $(\BBB'_1(\Gg,\Ng;\RR),\|\cdot\|')$, and let $\overline{\mathcal{C}}_{\RR}(\Gg,\Ng)$ be the norm-closure of $\mathcal{C}_{\RR}(\Gg,\Ng)$ in $\overline{\BBB}'_1(\Gg,\Ng;\RR)$. Then, $\mathcal{C}_{\RR}(\Gg,\Ng)\twoheadrightarrow \mathcal{C}^{\hhh}(\Gg,\Ng)$ extends to $\overline{\mathcal{C}}_{\RR}(\Gg,\Ng)\to \overline{\mathcal{C}}^{\hhh}(\Gg,\Ng)$, which is not necessarily surjective. 

\begin{example}[explicit non-zero element in $\Ker(\overline{\iota})$]\label{exa=iotakernel}
Here, we consider the case of $(\Gg,\Ng)=(\Hg,\Kg)$, where 
\begin{equation}\label{eq=HK}
\Hg=\langle a,b \;|\;[a,b]^2=e_{\Hg}\rangle\quad \mathrm{and}\quad 
\Kg=[\Hg,\Hg]. 
\end{equation}
Define a sequence $(c_n)_{n\in \NN}$ by $c_n=[a,b^{2^n}]/2^n$ for $n\in \NN$. Then, $(c_n)_{n\in \NN}$ is a sequence in $(\mathcal{C}_{\RR}(\Hg,\Kg),\|\cdot\|')$ (compare with \cite[Corollary 4.5]{MMM}). We claim that $(c_n)_{n\in \NN}$ is a Cauchy sequence. Indeed, by Proposition~\ref{prop=dualQNG}, for every $c\in \BBB'(\Hg,\Kg;\RR)$ we have
\begin{equation}\label{eq=dualB'}
\|c\|'=\sup_{[\psf]\in (\rQQQ_{\Kg}(\Hg) / \HHH^1_{\Kg}(\Hg))\setminus \{0\}}\frac{|\psf(c)|}{\DD''_{\Hg,\Kg}(\psf)}.
\end{equation}
Let $n\in \NN$. Let $\psf\in \rQQQ_{\Kg}(\Hg)$. Then, since 
\[
[a,b^{2^{n+1}}]=[a,b^{2^n}b^{2^n}]=[a,b^{2^n}](b^{2^n}[a,b^{2^n}]b^{-2^n}),
\]
we have $|\psf(c_{n+1}-c_n)|\leq (3/2^{n+1})\cdot \DD''_{\Hg,\Kg}(\psf)$; recall \eqref{eq=quasiinv}. By \eqref{eq=dualB'}, we have $\|c_{n+1}-c_n\|'\leq 3/2^{n+1}$, hence proving the claim.

Let $\hat{\alpha}\in \overline{\mathcal{C}}_{\RR}(\Hg,\Kg)$ be the norm-limit of $(c_n)_{n\in \NN}$ and $\alpha$ the image of $\hat{\alpha}$ by the map $\overline{\mathcal{C}}_{\RR}(\Hg,\Kg)\to \overline{\mathcal{C}}^{\hhh}(\Hg,\Kg)$. Then, for $(\Gg,\Ng)=(\Hg,\Kg)$ as in \eqref{eq=HK}, we can prove that $\Rdim \Ker(\overline{\iota})=1$ and $\alpha \in \overline{\mathcal{C}}^{\hhh}(\Hg,\Kg)$ generates $\Ker(\overline{\iota})$. We will explain the theory behind this for the case where $\Ng=[\Gg,\Gg]$ in our Part~II paper \cite{KKMMM_partII}.
\end{example}

\begin{remark}\label{rem=ghost}
The elements $\hat{\alpha}$ and $\alpha$, appearing in Example~\ref{exa=iotakernel}, are mysterious in the following sense. First, $\hat{\alpha}$ is a `ghost element,' whose meaning will be described below. If we naively try to express $\hat{\alpha}$ as an `infinite sum' $\sum\limits_{\xl\in \Kg}t_{\xl}\xl$, then we claim that all $t_{\xl}\in \RR$ must be zero. Indeed, for $\xl\in \Kg$, let $\delta_{\xl}\colon \Hg\to \RR$ be the delta function at $\xl$. Then, $\delta_{\xl}\in \rQQQ_{\Kg}(\Hg)$. Hence, the value $\langle [\delta_{\xl}],\hat{\alpha}\rangle$ of the duality pairing in Proposition~\ref{prop=dualQNG} makes sense, which should be regarded as $t_{\xl}$. However, we have
\[
\langle [\delta_{\xl}],\hat{\alpha}\rangle=\lim_{n\to \infty}\frac{\delta_{\xl}([a,b^{2^n}])}{2^n}=0.
\]
Therefore, the only `possible' way to express $\hat{\alpha}$ as an infinite sum will be ``$\hat{\alpha}=\sum\limits_{\xl\in \Kg}0\cdot \xl$.'' Nevertheless, as we mentioned in Example~\ref{exa=iotakernel}, $\alpha\ne 0$. In particular, $\hat{\alpha}$ is a non-zero element! Such an element $\hat{\alpha}$ is sometimes called a \emph{ghost element} (we remark that the `coefficient $t_{\xl}\in \RR$' does not make sense at the level of $\overline{\mathcal{C}}^{\hhh}(\Hg,\Kg)$, because $\delta_{\xl}$ does not belong to $\QQQ(\Kg)^{\Hg}$). 

Secondly, the element $\alpha$, at the level of $\overline{\mathcal{C}}^{\hhh}(\Hg,\Kg)$, is again mysterious. This is because for every $\muf\in \QQQ(\Kg)^{\Hg}$ we have
\[
\muf(\alpha)=\lim_{n\to \infty}\frac{\muf([a,b^{2^n}])}{2^n};
\]
this $\alpha$ itself embodies the limiting process. 
In fact, this formula implies that $\alpha\in \Ker(\overline{\iota})$. 
These examples may indicate that care is needed for the treatments of the completions $\overline{\mathcal{C}}_{\RR}(\Gg,\Ng)$ and $\overline{\mathcal{C}}^{\hhh}(\Gg,\Ng)$ of $\mathcal{C}_{\RR}(\Gg,\Ng)$ and $\mathcal{C}^{\hhh}(\Gg,\Ng)$, respectively.
\end{remark}

\section{Applications}\label{sec=applications}
\relax For a group $\Gg$ and its normal subgroup $\Ng$, recall from Section~\ref{sec=proofmain} that $(\overline{\mathcal{C}}^{\hhh}(\Gg,\Ng),\barsclGN)$ denotes the completion of $(\mathcal{C}^{\hhh}(\Gg,\Ng),\sclGN)$. Also, recall that $\overline{\iota}\colon (\barChGN,\barsclGN)\to (\barChG,\barsclG)$  is the continuous extension of $\iota=\iota^{\hhh}_{\RR}\colon (\ChGN,\sclGN)\to (\ChG,\sclG)$.

\subsection{Crushing theorem}\label{subsec=crushing}

Recall from our notation that $\Rdim$ means the real dimension of an $\RR$-linear space, taking values in $\ZZ_{\geq 0}\cup \{\infty\}$.

\begin{theorem}[crushing theorem]\label{thm=crushing}
Let $\Gg$ be a group and $\Ng$ its normal subgroup. Let $\Hg$ be a group and $\Kg$ its normal subgroup. Let $i\colon \Ng\hookrightarrow \Gg$ be the inclusion map. Assume that $\widehat{i^{\ast}}(\QQQ(\Gg)/\HHH^1(\Gg))$ is norm-closed in $(\QQQ(\Ng)^{\Gg}/\HHH^1(\Ng)^{\Gg},\DD_{\Ng})$, and that $\WW(\Hg,\Kg)$ is finite-dimensional. Assume that
\[
\Rdim \WGN > \Rdim \WW(\Hg,\Kg).
\]
Let  $\varphi\colon \Gg\to \Hg$ be a group homomorphism such that $\varphi(\Ng)\subseteq \Kg$. Then there exists a sequence $(\yl_n)_{n\in \NN}$ in $\CGN$ satisfying 
\[
\lim_{n\to\infty}\frac{\scl_{\Gg,\Ng}(\yl_n)}{\scl_{\Gg}(\yl_n)}=\infty\quad \text{and}\quad \lim_{n\to\infty}\frac{\scl_{\Gg,\Ng}(\yl_n)}{\scl_{\Hg,\Kg}(\varphi(\yl_n))}=\infty.
\]
\end{theorem}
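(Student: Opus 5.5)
We work in the completed chain spaces of Section~\ref{sec=proofmain}. The homomorphism $\varphi$ maps $\CCC_1(\Ng;\RR)$ into $\CCC_1(\Kg;\RR)$, sends $\CCC'_2(\Gg,\Ng;\RR)$ into $\CCC'_2(\Hg,\Kg;\RR)$ and $\mathrm{h}(\Gg,\Ng)$ into $\mathrm{h}(\Hg,\Kg)$. Hence it induces a linear map $\varphi_\ast\colon \ChGN\to \mathcal{C}^{\hhh}(\Hg,\Kg)$. By the generalized mixed Bavard duality theorem this map satisfies $\scl_{\Hg,\Kg}(\varphi_\ast c)\leq \sclGN(c)$, since $\muf\circ\varphi\in\QNG$ with $\DD_{\Ng}(\muf\circ\varphi)\leq \DD_{\Kg}(\muf)$. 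It therefore extends to a bounded operator $\overline{\varphi}\colon \barChGN\to \overline{\mathcal{C}}^{\hhh}(\Hg,\Kg)$ of norm at most $1$. Its adjoint is $[\muf]\mapsto[\muf\circ\varphi]$, by the same pairing computation used to check $\iota^\dagger=\widehat{i^\ast}$.

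\textbf{Step 1.} By the norm-closedness assumption, \eqref{eq=kernelcokernel} gives $\Ker(\overline{\iota})^{\ast}\cong\WGN$. Hence $\Rdim\Ker(\overline{\iota})>\Rdim\WW(\Hg,\Kg)=:d$: if $\Rdim\WGN=\infty$, the kernel is infinite-dimensional; otherwise the two dimensions agree.

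\textbf{Step 2.} Apply the same reasoning to $(\Hg,\Kg)$, writing $\overline{\iota}_{\Hg}$ for the corresponding operator. Since $\WW(\Hg,\Kg)$ is finite-dimensional, Proposition~\ref{prop=findim} gives norm-closedness, so $\Ker(\overline{\iota}_{\Hg})$ has dimension exactly $d$.

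\textbf{Step 3.} We want to show that $\overline{\varphi}$ maps $\Ker(\overline{\iota})$ into $\Ker(\overline{\iota}_{\Hg})$. Naturality gives $\overline{\iota}_{\Hg}\circ\overline{\varphi}=\overline{\varphi_{\Gg}}\circ\overline{\iota}$, where $\overline{\varphi_{\Gg}}\colon \barChG\to\overline{\mathcal{C}}^{\hhh}(\Hg)$ is induced by $\varphi$ on the absolute spaces. This identity holds on the dense subspace and then extends by continuity.

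\textbf{Step 4.} By dimension count, the restriction $\overline{\varphi}|_{\Ker(\overline{\iota})}$ has a nonzero kernel. Choose $\alpha\neq0$ with $\overline{\iota}(\alpha)=0$ and $\overline{\varphi}(\alpha)=0$, normalized so that $\barsclGN(\alpha)=1$.

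\textbf{Step 5.} Now repeat the first case of the proof of Theorem~\ref{mthm=main_biLip}. Apply Lemma~\ref{lem=approximation_scaling} to $n\alpha$ with $\varepsilon=1$. This gives $k_n\in\NN$ and $\yl_n\in\CGN$ with $\barsclGN(k_n n\alpha-\yl_n)\leq k_n$. Consequently $\sclGN(\yl_n)\geq(n-1)k_n$ and $\sclG(\yl_n)\leq k_n$. Moreover $\scl_{\Hg,\Kg}(\varphi(\yl_n))=\overline{\scl}_{\Hg,\Kg}(\overline{\varphi}(\yl_n-k_nn\alpha))\leq k_n$, using $\|\overline{\varphi}\|_{\op}\leq1$ and the fact that $\varphi(\yl_n)\in[\Hg,\Kg]$ is represented by $\varphi_\ast(\yl_n)$. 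Both ratios are then at least $n-1$, which tends to infinity.

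The main obstacle is the functoriality and boundedness of $\overline{\varphi}$ together with the commutation with $\overline{\iota}$. Of these, the bound $\scl_{\Hg,\Kg}\circ\varphi_\ast\leq\sclGN$ is the step I would check first via duality. A minor subtlety is ensuring that the dimension count is valid when $\WGN$ is infinite-dimensional but possibly not Hausdorff. The closedness hypothesis handles this, since it makes \eqref{eq=kernelcokernel} an isometric identification.
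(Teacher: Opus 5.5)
Your proposal is correct and follows essentially the paper's route: restrict the induced map $\overline{\varphi}$ to $S_{\varphi}\colon \Ker(\overline{\iota})\to \Ker(\overline{\iota}_{\Hg})$, show $\Ker(S_{\varphi})\neq 0$, and pass to $\CGN$ via Lemma~\ref{lem=approximation_scaling}. The one difference is how you get $\Ker(S_{\varphi})\neq 0$: you use rank--nullity into the $d$-dimensional space $\Ker(\overline{\iota}_{\Hg})$, whereas the paper identifies $\Coker(T^{\varphi})\cong \Ker(S_{\varphi})^{\ast}$ via the closed range theorem; your route is simpler, still gives the bound $\Rdim\Ker(S_{\varphi})\geq \Rdim\WGN-\Rdim\WW(\Hg,\Kg)$, and you also write out the estimate for $\scl_{\Hg,\Kg}(\varphi(\yl_n))$ that the paper leaves implicit.
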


\begin{remark}\label{rem=crushing}
Under the assumptions of Theorem~\ref{thm=crushing}, we have more detailed pieces of information on $\QQ$-chains as follows: there exists $B\subseteq \CQ(\Gg,\Ng)$ that fulfills the following four conditions.
\begin{enumerate}[label=\textup{(\alph*)}]
    \item $(B,\scl_{\Gg})$ is bounded.
    \item $(\varphi(B),\scl_{\Hg,\Kg})$ is bounded.
    \item $(B,\scl_{\Gg,\Ng})$ is isomorphic, as a coarse group, to a real normed space of dimension at least $\Rdim \WGN - \Rdim \WW(\Hg,\Kg)$; in particular, $(B,\scl_{\Gg,\Ng})$ is unbounded.
    \item For $\muf\in \QQQ(\Ng)^{\Gg}$, there exist $\psf\in \QQQ(\Gg)$ and $\nuf\in \QQQ(\Kg)^{\Hg}$ with 
\[
\muf-\psf|_{\Ng}-\nuf\circ  (\varphi|_{\Ng})  \in \HHH^1(\Ng)^{\Gg}
\]
if and only if $\muf(B)$ is bounded.
\end{enumerate}
\end{remark}

\begin{proof}[Proof of Theorem~\textup{\ref{thm=crushing}}]
First, we will prove the assertion stated in Remark~\ref{rem=crushing}, and then show Theorem~\ref{thm=crushing}. For group pairs $(G_1,N_1)$ and $(G_2,N_2)$, where $N_i$ is a normal subgroup of $G_i$ for $i\in \{1,2\}$, we write $\sigma\colon (G_1,N_1)\to (G_2,N_2)$ if $\sigma\colon G_1\to G_2$ is a group homomorphism that satisfies  $\sigma(N_1)\subseteq N_2$. For $\sigma\colon (G_1,N_1)\to (G_2,N_2)$, this $\sigma$ induces the following two $\RR$-linear maps: $\mathcal{C}^{\hhh}(G_1,N_1)\to \mathcal{C}^{\hhh}(G_2,N_2)$; $c\mapsto \sigma(c)$ and $\QQQ(N_2)^{G_2}\to \QQQ(N_1)^{G_1}$; $\muf\mapsto \muf\circ  (\sigma|_{\Ng_1} )$. These two maps furthermore induce the following $\RR$-linear maps:
\begin{align*}
&\sigma_{\ast}\colon (\overline{\mathcal{C}}^{\hhh}(G_1,N_1),\overline{\scl}_{G_1,N_1})\to (\overline{\mathcal{C}}^{\hhh}(G_2,N_2),\overline{\scl}_{G_2,N_2})\quad \mathrm{and}\\
& \sigma^{\ast}\colon (\QQQ(N_2)^{G_2}/\HHH^1(N_2)^{G_2},2\DD_{N_2})\to (\QQQ(N_1)^{G_1}/\HHH^1(N_1)^{G_1},2\DD_{N_1}),
\end{align*}
respectively. 

Now, given $\varphi\colon \Gg\to \Hg$, we can regard $\varphi$ as $\varphi\colon (\Gg,\Ng)\to (\Hg,\Kg)$ and as $\varphi'\colon (\Gg,\Gg)\to (\Hg,\Hg)$, respectively. Then, both of the following diagrams commute. 
\[
\xymatrix{
\overline{\mathcal{C}}^{\hhh}(\Gg,\Ng) \ar[r]^{\overline{\iota}^{\Gg}_{\Ng}} \ar[d]_{\varphi_{\ast}} &  \overline{\mathcal{C}}^{\hhh}(\Gg) \ar[d]^{\varphi'_{\ast}} \\
\overline{\mathcal{C}}^{\hhh}(\Hg,\Kg) \ar[r]_{\overline{\iota}^{\Hg}_{\Kg}} & \overline{\mathcal{C}}^{\hhh}(\Hg) \ar@{}[lu]|{\circlearrowright}
}
\qquad \quad
\xymatrix{
\QQQ(\Gg)/\HHH^1(\Gg) \ar[r]^{\widehat{i^{\ast}}^{\Gg}_{\Ng}} \ar[d]_{\varphi'{}^{\ast}} & \QQQ(\Ng)^{\Gg}/\HHH^1(\Ng)^{\Gg} \ar[d]^{\varphi^{\ast}} \\
\QQQ(\Hg)/\HHH^1(\Hg) \ar[r]_{\widehat{i^{\ast}}^{\Hg}_{\Kg}} & \QQQ(\Kg)^{\Hg}/\HHH^1(\Kg)^{\Hg} \ar@{}[lu]|{\circlearrowright}
}
\]
Here we indicate the group pair for $\overline{\iota}$ and $\widehat{i^{\ast}}$, respectively;  for instance, $\bar{\iota}^{\Gg}_{\Ng}$ stands for $\bar{\iota}$ for the group pair $(\Gg,\Ng)$.  Therefore, $\varphi\colon \Gg\to \Hg$ induces two $\RR$-linear maps 
\[
S_{\varphi}\colon \Ker(\overline{\iota}^{\Gg}_{\Ng})\to \Ker(\overline{\iota}^{\Hg}_{\Kg})
\quad
\mathrm{and}
\quad
T^{\varphi}\colon \WW(\Hg,\Kg)\to \WGN,
\]
respectively. Here, the norms on $\WW(\Hg,\Kg)$ and $\WGN$ are the quotient norms coming from $2\DD_{\Kg}$ and $2\DD_{\Ng}$, respectively. 

By the assumption and Corollary~\ref{cor=findim}, we have $\Ker(\overline{\iota}_{\Gg,\Ng})^{\ast}\cong \WGN$ and $\Ker(\overline{\iota}_{\Hg,\Kg})^{\ast}\cong \WW(\Hg,\Kg)$; recall our argument to deduce \eqref{eq=kernelcokernel} in the proof of Theorem~\ref{mthm=main_Qcoarsekernel}. 
 We can also show that $S_{\varphi}^{\dagger}=T^{\varphi}$. The following diagram \eqref{eq=diagramscl_GNHK} illustrates our argument up to this point, where two vertical arrows indicate the duality.
\begin{equation}\label{eq=diagramscl_GNHK}
    \begin{tikzpicture}[auto, baseline=(current bounding box.center)]
    \node[align=center] (01) at (0, 1.5) { $\Ker(\overline{\iota}_{\Gg,\Ng})$ };
    \node[align=center] (11) at (3, 1.5) { $\Ker(\overline{\iota}_{\Hg,\Kg})$ };
    \node[align=center] (00) at (0, 0) { $\WGN$ };
    \node[align=center] (10) at (3, 0) { $\WW(\Hg,\Kg)$ };
    \draw[->, decorate, decoration={snake}] (01) to node { } (00);
    \draw[->] (01) to node { $\scriptstyle S_{\varphi}$ } (11);
    \draw[->, decorate, decoration={snake}] (11) to node { } (10);
    \draw[<-] (00) to node { $\scriptstyle T^{\varphi}=S_{\varphi}^{\dagger}$ } (10);
    \end{tikzpicture}
\end{equation}

Note that $T^{\varphi}(\WW(\Hg,\Kg))$ is norm-closed in $\WGN$ because $\WW(\Hg,\Kg)$ is finite-dimensional. Hence, in the setting of \eqref{eq=diagramscl_GNHK} with $S=S_{\varphi}$,  item (ii) of Theorem~\ref{thm=Banach} holds. Therefore, by Theorem~\ref{thm=Banach} and Lemma~\ref{lem=dualquotient}~(1), we have 
\[
T^{\varphi}(\WW(\Hg,\Kg))=\Ker(S_{\varphi})^{\perp}\quad \text{and}\quad\Coker(T^{\varphi})\cong \Ker(S_{\varphi})^{\ast}.
\]
If $\Rdim \WGN<\infty$, then this implies that $\Coker(T^{\varphi})^{\ast}\cong \Ker(S_{\varphi})$, and hence $\Rdim\Ker(S_{\varphi})\geq \Rdim\WGN-\Rdim \WW(\Hg,\Kg)$. If $\Rdim \WGN=\infty$, then in this case we have 
\[
\Rdim\Ker(S_{\varphi})=\infty= \Rdim\WGN-\Rdim \WW(\Hg,\Kg).
\]
Hence, we have $\Rdim\Ker(S_{\varphi})\geq \Rdim\WGN-\Rdim \WW(\Hg,\Kg)$ in both cases. 

At the level of $\QQ$-chains, we can approximate $\Ker(S_{\varphi})$ by a subset $B$ of $\mathcal{C}^{\hhh}_{\QQ}(\Gg,\Ng)$ in such a way that $B\asymp \Ker(S_{\varphi})$ in $(\overline{\mathcal{C}}^{\hhh}(\Gg,\Ng),\barsclGN)$. This $B$ satisfies conditions (a), (b), (c) and (d) in Remark~\ref{rem=crushing}; recall the proof of Theorem~\ref{thm=cCKextend}. At the level of group elements in $\CGN$, we can construct a sequence $(\yl_n)_{n\in \NN}$ in $\CGN$ that satisfies the two assertions in Theorem~\ref{thm=crushing} \relax with the aid of Lemma~\ref{lem=approximation_scaling}. 
\end{proof}

\subsection{Existence of norm-attaining elements}
Here we exhibit applications of the refined version of the generalized mixed Bavard duality theorem (Theorem~\ref{thm=refinedgmBavard} and the duality pairing \eqref{eq=barduality}). The first one is the existence of extremal quasimorphisms in the mixed setting. We note that Calegari \cite{Calegari} argued in the setting of $\Ng=\Gg$. In this subsection, let $\Gg$ be a group and $\Ng$ its normal subgroup; let $i\colon \Ng \hookrightarrow \Gg$ be the inclusion map. 

\begin{proposition}\label{prop=extremal}
Assume that $\QQQ(\Ng)^{\Gg}\ne \HHH^1(\Ng)^{\Gg}$. Then, for every $c\in \mathcal{C}_{\RR}(\Gg,\Ng)$, there exists $\muf\in \QQQ(\Ng)^{\Gg}\setminus \HHH^1(\Ng)^{\Gg}$ such that
\[
\sclGN(c)=\frac{|\muf(c)|}{2\DD_\Ng (\muf)}
\]
holds.
\end{proposition}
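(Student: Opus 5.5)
The plan is to apply the Hahn--Banach theorem in the form of Proposition~\ref{prop=FA}~(1) to the normed space $(\ChGN,\sclGN)$ and then use the refined duality of Theorem~\ref{thm=refinedgmBavard}~(2). Concretely, fix $c\in\mathcal{C}_{\RR}(\Gg,\Ng)$ and view it as an element $[c]$ of $\ChGN$. By Theorem~\ref{thm=refinedgmBavard}~(2), the extended function $\sclGN$ is a genuine norm on $\ChGN$, namely $\frac{1}{2}\|\cdot\|_{\hhh}$ by \eqref{eq=choudo1/2}. Moreover, its continuous dual is isometrically $(\QNG/\HHH^1(\Ng)^{\Gg},2\DD_{\Ng})$ via $([\muf],c)\mapsto\muf(c)$.

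First I treat the case $\sclGN(c)>0$. The Hahn--Banach theorem gives a norm-attaining functional: there is $\phi\in(\ChGN,\sclGN)^{\ast}$ with $\|\phi\|=1$ and $\langle\phi,[c]\rangle=\sclGN(c)$. I would obtain it by defining $\phi$ on $\RR[c]$ by $t[c]\mapsto t\,\sclGN(c)$ and extending with norm $1$; this is the standard content behind Proposition~\ref{prop=FA}~(1), where the supremum is attained. By the duality pairing, $\phi$ corresponds to a class $[\muf]$ with $2\DD_{\Ng}(\muf)=1$ and $\muf(c)=\sclGN(c)$. In particular $[\muf]\ne0$, so $\muf\in\QNG\setminus\HHH^1(\Ng)^{\Gg}$, and the desired equality $\sclGN(c)=|\muf(c)|/(2\DD_{\Ng}(\muf))$ holds. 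One point needs care: when $c$ is real rather than rational, the quantity $\sclGN(c)$ must be read as the continuous extension on $\ChGN$. This reading is consistent, since for rational $c$ it agrees with Definition~\ref{defn=sclQchain} by Theorem~\ref{thm=refinedgmBavard}~(1).

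Next I treat the case $\sclGN(c)=0$. Then $[c]=0$ in $\ChGN$, so $\muf(c)=0$ for every $\muf\in\QNG$. This follows because the pairing is well defined on $\ChGN$, or directly from \eqref{eq=HBh}. Since $\QNG\neq\HHH^1(\Ng)^{\Gg}$ by hypothesis, I pick any $\muf\in\QNG\setminus\HHH^1(\Ng)^{\Gg}$. Then $\DD_{\Ng}(\muf)>0$, because the defect is a genuine norm on the quotient, and both sides of the equality are $0$. The hypothesis is used exactly here, and also guarantees the dual space is nonzero.

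The main obstacle is minor: ensuring that the Hahn--Banach extension gives an honest element of $\QNG$ rather than an abstract functional. That is precisely what Proposition~\ref{prop=dualQN} and Theorem~\ref{thm=refinedgmBavard}~(2) provide, since they identify the dual isometrically and surjectively. No completion is needed, because $c$ already lies in $\ChGN$.
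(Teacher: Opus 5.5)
Your proposal is correct and follows the same route as the paper, which says only that the statement follows from the Hahn--Banach theorem together with the duality in Theorem~\ref{thm=refinedgmBavard}: you take a norming functional for $[c]$ in $(\ChGN,\sclGN)$ and realize it as a class $[\muf]$ in $(\QNG/\HHH^1(\Ng)^{\Gg},2\DD_{\Ng})$. Your separate treatment of the case $\sclGN(c)=0$ spells out something the paper leaves implicit: Hahn--Banach alone would return the zero functional there, so the hypothesis $\QNG\ne\HHH^1(\Ng)^{\Gg}$ is exactly what supplies an admissible $\muf$.
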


\begin{proof}
This immediately follows from the Hahn--Banach theorem and the duality in Theorem~\ref{thm=refinedgmBavard} (see, for instance,  \cite[Theorem~3.3]{Rudin}  for this deduction).
\end{proof}

If $\widehat{i^{\ast}}(\QQQ(\Gg)/\HHH^1(\Gg))$ is norm-closed in $(\QNG/\HHH^1(\Ng)^{\Gg},\DD_{\Ng})$, then $\WGN$ is naturally equipped with the quotient norm.  Atsushi  Katsuda asked  the fifth-named author   whether we can take a representative that attains this quotient norm. Here is the affirmative answer.

\begin{proposition}\label{prop=attain}
Assume that $\widehat{i^{\ast}}(\QQQ(\Gg)/\HHH^1(\Gg))$ is norm-closed in $(\QNG/\HHH^1(\Ng)^{\Gg},\DD_{\Ng})$. Then, for every $\muf\in \QQQ(\Ng)^{\Gg}$, there exists $\phf\in \QQQ(\Gg)$ that satisfies 
\[
\DD_{\Ng}(\muf-\phf|_{\Ng})=\inf\{\DD_{\Ng}(\muf-\psf|_{\Ng})\;|\;\psf\in \QQQ(\Gg)\}.
\]
\end{proposition}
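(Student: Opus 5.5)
The plan is to use the duality already established in the proof of Theorem~\ref{mthm=main_Qcoarsekernel}. The norm-closedness assumption, combined with Theorem~\ref{thm=Banach} (item (iv) for $S=\overline{\iota}$), gives
\[
\widehat{i^{\ast}}(\QQQ(\Gg)/\HHH^1(\Gg))=\Ker(\overline{\iota})^{\perp}
\]
inside the dual $(\barChGN,\barsclGN)^{\ast}\cong(\QNG/\HHH^1(\Ng)^{\Gg},2\DD_{\Ng})$. So the infimum on the right-hand side is just the quotient norm of $[\muf]$ modulo the annihilator $U^{\perp}$, where $U=\Ker(\overline{\iota})$, up to the factor $2$. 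Two observations make this work. First, $\DD_{\Ng}(\muf-\psf|_{\Ng})$ depends only on the class of $\muf-\psf|_{\Ng}$ modulo $\HHH^1(\Ng)^{\Gg}$, because the defect is a norm on that quotient. Second, every element of $\HHH^1(\Ng)^{\Gg}$ is absorbed by the class, so ranging over $\psf\in\QQQ(\Gg)$ is the same as ranging over the subspace $U^{\perp}$.

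The key step is the standard fact that in a dual space $X^{\ast}$, the distance from a functional to a weak$^{\ast}$-closed subspace of the form $U^{\perp}$ is attained. I would prove this directly through Lemma~\ref{lem=dualquotient}~(1). Set $X=\barChGN$. The restriction map $X^{\ast}\to U^{\ast}$ induces the isometry $X^{\ast}/U^{\perp}\cong U^{\ast}$, so $\inf_{\nu\in U^{\perp}}\|[\muf]-\nu\|=\|\muf|_U\|_{U^{\ast}}$. By the Hahn--Banach theorem, $\muf|_U$ admits a norm-preserving extension $\theta\in X^{\ast}$ with $\|\theta\|=\|\muf|_U\|_{U^{\ast}}$. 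Then $\nu:=[\muf]-\theta$ vanishes on $U$, so $\nu\in U^{\perp}$, and $\|[\muf]-\nu\|=\|\theta\|$ equals the infimum.

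It remains to write $\nu=\widehat{i^{\ast}}[\phf]$ for some $\phf\in\QQQ(\Gg)$, which is possible because $U^{\perp}$ is the range of $\widehat{i^{\ast}}$. By the first observation, $\DD_{\Ng}(\muf-\phf|_{\Ng})=\tfrac12\|[\muf]-\nu\|$. Since the infimum over $\psf$ equals $\tfrac12\inf_{\nu'\in U^{\perp}}\|[\muf]-\nu'\|$, this $\phf$ attains the infimum.

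The main care point is the identification of the range of $\widehat{i^{\ast}}$ with $\Ker(\overline{\iota})^{\perp}$, which relies on completeness and the closed range theorem. It is already available from item~(d) in the proof of Theorem~\ref{mthm=main_Qcoarsekernel}, where the norm-closedness hypothesis with $\DD_{\Ng}$ is equivalent to that with $2\DD_{\Ng}$. Beyond that, the argument is just Hahn--Banach.
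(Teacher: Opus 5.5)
Your proof is correct. It shares the paper's first step: the closed range theorem, applied to $\overline{\iota}$, identifies $\widehat{i^{\ast}}(\QQQ(\Gg)/\HHH^1(\Gg))$ with $\Ker(\overline{\iota})^{\perp}$. You differ from the paper in how you show that the distance is attained.

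\emph{The paper's route.} It observes that $\Ker(\overline{\iota})^{\perp}$ is weak$^{\ast}$-closed and uses Banach--Alaoglu. The nested, nonempty, weak$^{\ast}$-compact sets $B_R([\muf])\cap\Ker(\overline{\iota})^{\perp}$ have nonempty intersection, and any point in it is a minimizer.

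\emph{Your route.} You use the isometry $X^{\ast}/U^{\perp}\cong U^{\ast}$ from Lemma~\ref{lem=dualquotient}~(1). A norm-preserving Hahn--Banach extension $\theta$ of $\muf|_U$ then gives the minimizer $[\muf]-\theta$ explicitly. In effect you are noting that the infimum defining the quotient norm on $X^{\ast}/U^{\perp}$ is always a minimum, which is exactly how that lemma is proved.

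\emph{Comparison.} Your argument avoids weak$^{\ast}$-compactness. It also yields the formula
\[
\inf_{\psf\in\QQQ(\Gg)}\DD_{\Ng}(\muf-\psf|_{\Ng})=\sup_{\alpha\in\Ker(\overline{\iota})\setminus\{0\}}\frac{|\muf(\alpha)|}{2\barsclGN(\alpha)}
\]
for the quotient norm on $\WGN$, which matches the spirit of Theorem~\ref{thm=cCKextend}. The paper's argument applies verbatim to any nonempty weak$^{\ast}$-closed subset, not only to annihilator subspaces, though that extra generality is not needed here.
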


For a real dual Banach space $(Y,\|\cdot\|_Y)$ of the form $(X,\|\cdot\|_X)^{\ast}$, recall that the \emph{weak$^{\ast}$-topology} on $Y$ (with respect to the duality pairing $\langle\cdot,\cdot\rangle_X\colon Y\times X\to \RR$) is the weakest topology in which the following family of linear functionals
\[
\big(\langle \cdot ,x\rangle_X\colon Y\to \RR;\ y\mapsto \langle y,x\rangle_X\big)_{x\in X}
\]
are all continuous. We use the following result: for a non-empty weak$^{\ast}$-closed subset $A$ of a dual Banach space $Y$ and for $y\in Y$, the distance between $y$ and $A$ is realized. To prove this, by weak$^{\ast}$-compactness of closed balls (Banach--Alaoglu theorem; see \cite[Theorem~3.15]{Rudin}), the intersection of $B^{Y}_R(y)\cap A$ over all $R$ with $\mathrm{dist}(y,A)+1\geq R>\mathrm{dist}(y,A)$ is non-empty; every element there realizes $\mathrm{dist}(y,A)$.

\begin{proof}[Proof of Proposition~\textup{\ref{prop=attain}}]
By Theorem~\ref{thm=refinedgmBavard} and the assumption, we can apply Theorem~\ref{thm=Banach} for $S=\overline{\iota}\colon (\barChGN,\barsclGN)\to (\barChG,\barsclG)$. Then, we in particular have $\widehat{i^{\ast}}(\QQQ(\Gg)/\HHH^1(\Gg))=\Ker(\overline{\iota})^{\perp}$. By this expression, the set $\widehat{i^{\ast}}(\QQQ(\Gg)/\HHH^1(\Gg))$ is weak${}^\ast$-closed in $(\QQQ(\Ng)^{\Gg}/\HHH^1(\Ng)^{\Gg},2\DD_{\Ng})$, where  the weak${}^\ast$-topology on $\QQQ(\Ng)^{\Gg}/\HHH^1(\Ng)^{\Gg}$ is given by \eqref{eq=barduality}. Now, we employ the aforementioned result on distance realizers for $Y=(\QQQ(\Ng)^{\Gg}/\HHH^1(\Ng)^{\Gg},2\DD_{\Ng})$, $A=\widehat{i^{\ast}}(\QQQ(\Gg)/\HHH^1(\Gg))$, and $y=[\muf]$.
\end{proof}

\section{Proof of Theorem~\ref{thm=closedrange}}\label{sec=closedrange}

The goal of this section is to show Theorem~\ref{thm=closedrange}. Recall that $\Ng$ is a normal subgroup of $\Gg$, and $\Gamma$ is the quotient group $\Gg/\Ng$. Let $i \colon \Ng \hookrightarrow \Gg$ be the inclusion and $\ppi \colon \Gg \to \Gamma$ be the quotient map. Also, recall from Theorem~\ref{H2b_Banach} that the second bounded cohomology $\HHH^2_b(G)$ is a Banach space with the respect to the norm $\|\cdot\|$.

Let $K$ be the kernel of $i^* \colon \HHH^2_b(\Gg) \to \HHH^2_b(\Ng)^{\Gg}$. Recall that the map $\delta \colon \QQQ(\Gg) \to \HHH^2_b(\Gg)$ is defined in Subsection~\ref{subsec=H_b}. Consider the following two conditions:
\begin{enumerate}
\item[(A)] The image of $i^* \colon \HHH^2_b(\Gg) \to \HHH^2_b(\Ng)^{\Gg}$ is norm-closed.
\item[(B)] The space $K\cap \delta\QQQ(G)$ is finite-codimensional in $K$.
\end{enumerate}

\begin{lemma} \label{lem=a->A}
If $\HHH^3_b(\Gam)$ is finite-dimensional, then condition {\rm(A)} is satisfied.
\end{lemma}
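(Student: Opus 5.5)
We derive condition (A) from Monod's exact sequence (Theorem~\ref{theorem_Monod_sequence}), transplanting the finite-codimension argument of Corollary~\ref{cor=findim} to the image of $i^*$. Write $\partial^3\colon \HHH^2_b(\Ng)^{\Gg}\to \HHH^3_b(\Gam)$ for the last map of the sequence. Exactness at $\HHH^2_b(\Ng)^{\Gg}$ gives $\mathrm{Im}(i^*)=\Ker(\partial^3)$. Hence the algebraic quotient $\Ker(\partial^3)/\mathrm{Im}(i^*)$ is zero. However, the algebraic quotient $\HHH^2_b(\Ng)^{\Gg}/\mathrm{Im}(i^*)$ embeds into $\HHH^3_b(\Gam)$, which is finite-dimensional by assumption. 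So $\mathrm{Im}(i^*)$ has finite codimension in $\HHH^2_b(\Ng)^{\Gg}$.

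Next we place ourselves in the setting of Corollary~\ref{cor=findim}. The source $\HHH^2_b(\Gg)$ is a Banach space by Theorem~\ref{H2b_Banach}. The map $i^*$ is bounded with operator norm at most $1$, since restriction of cochains does not increase the $\ell^\infty$-norm. For the target, we need $\HHH^2_b(\Ng)^{\Gg}$ to be a Banach space. By Theorem~\ref{H2b_Banach}, $\HHH^2_b(\Ng)$ is Banach. The $\Gg$-action by conjugation $\gl\cdot[f](\xl_1,\xl_2)=[f(\gl^{-1}\xl_1\gl,\gl^{-1}\xl_2\gl)]$ is by isometries of the seminorm, so each $\gl$ acts by a bounded operator. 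The invariant part is the intersection over $\gl\in\Gg$ of the kernels of $\mathrm{id}-\gl$, hence norm-closed, and therefore Banach. Corollary~\ref{cor=findim} then applies with $X=\HHH^2_b(\Gg)$, $Y=\HHH^2_b(\Ng)^{\Gg}$ and $S=i^*$, and yields that $\mathrm{Im}(i^*)$ is norm-closed. This is condition (A).

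The main point needing care is the Banach property of the target together with the fact that the invariant subspace is closed. In particular, one must make sure that the seminorm on $\HHH^2_b(\Ng)$ is a genuine norm, which is what Theorem~\ref{H2b_Banach} provides in degree $2$, and that the conjugation action is well defined on classes and isometric. Both follow directly from the cochain-level definitions. The finite-codimension step itself only uses the exactness of Monod's sequence at $\HHH^2_b(\Ng)^{\Gg}$ and requires no continuity of $\partial^3$.
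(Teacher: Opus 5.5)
Your proof is correct and follows the paper's argument. Exactness of Monod's sequence at $\HHH^2_b(\Ng)^{\Gg}$ embeds the algebraic cokernel of $i^*$ into the finite-dimensional space $\HHH^3_b(\Gam)$, and Corollary~\ref{cor=findim} together with Theorem~\ref{H2b_Banach} then gives closedness of the image. You also verify points the paper leaves implicit: $i^*$ is bounded, and $\HHH^2_b(\Ng)^{\Gg}$ is a closed, hence Banach, subspace. You correctly work with the cokernel of $i^*$, where the paper's text writes $\ppi^*$, evidently a slip.
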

\begin{proof}
By Theorem~\ref{theorem_Monod_sequence}, we have an exact sequence
\[ 0 \to \HHH^2_b(\Gam) \xrightarrow{\ppi^*} \HHH^2_b(\Gg) \xrightarrow{i^*} \HHH^2_b(\Ng)^{\Gg} \to \HHH^3_b(\Gam).\]
Since $\HHH^3_b(\Gam)$ is finite-dimensional, the (algebraic) cokernel of $\ppi^* \colon \HHH^2_b(\Gam) \to \HHH^2_b(\Gg)$ is finite-dimensional. Hence, Corollary~\ref{cor=findim} and Theorem~\ref{H2b_Banach} imply that $i^* \HHH^2_b(\Gg)$ is norm-closed.
\end{proof}

\begin{lemma} \label{lem=b->B}
If one of the following conditions are satisfied, then condition {\rm (B)} is satisfied:
\begin{enumerate}
\item[$(1)$] The $($algebraic$)$ cokernel of the map $\delta \colon \QQQ(\Gam) \to \HHH^2_b(\Gam)$ is finite-dimensional.
\item[$(2)$] $\HHH^2(\Gam)$ is finite-dimensional.
\end{enumerate}
\end{lemma}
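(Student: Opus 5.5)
Both cases rest on the same description of $K$. By the exact sequence of Theorem~\ref{theorem_Monod_sequence}, $K=\Ker(i^*)=\ppi^*\HHH^2_b(\Gam)$, and $\ppi^*\colon \HHH^2_b(\Gam)\to K$ is a linear isomorphism. By Proposition~\ref{prop_Q_exact} applied to $\Gg$, we have $\delta\QQQ(\Gg)=\Ker(c_{\Gg}\colon \HHH^2_b(\Gg)\to\HHH^2(\Gg))$. Hence
\[
K\cap \delta\QQQ(\Gg)=\Ker\big(c_{\Gg}|_K\colon K\to \HHH^2(\Gg)\big).
\]
So in each case it is enough to control this kernel inside $K$, or to exhibit a finite-codimensional subspace of $K$ contained in it.

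Case (1). I will show that $\ppi^*(\delta\QQQ(\Gam))\subseteq K\cap\delta\QQQ(\Gg)$. For $\phf\in\QQQ(\Gam)$, the composite $\phf\circ\ppi$ is again a homogeneous quasimorphism on $\Gg$, with defect at most $\DD(\phf)$. Naturality of $\delta$ at the cochain level then gives $\ppi^*[\delta\phf]=[\delta(\phf\circ\ppi)]\in\delta\QQQ(\Gg)$. This class also lies in $K=\ppi^*\HHH^2_b(\Gam)$, so the inclusion holds. Since $\ppi^*\colon\HHH^2_b(\Gam)\to K$ is an isomorphism, it carries the finite-codimensional subspace $\delta\QQQ(\Gam)\subseteq\HHH^2_b(\Gam)$ onto a subspace of $K$ of the same finite codimension. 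Therefore $K\cap\delta\QQQ(\Gg)$, which contains this image, is finite-codimensional in $K$.

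Case (2). Here I use the kernel description from the first paragraph: the linear map $c_{\Gg}|_K$ induces an injection
\[
K/(K\cap\delta\QQQ(\Gg))\hookrightarrow \HHH^2(\Gg).
\]
Naturality of the comparison map gives $c_{\Gg}\circ\ppi^*=\ppi^*\circ c_{\Gam}$, where the $\ppi^*$ on the right-hand side is the map $\HHH^2(\Gam)\to\HHH^2(\Gg)$. Hence the image of $c_{\Gg}|_K$ is contained in $\ppi^*\HHH^2(\Gam)$, which is finite-dimensional by assumption. Consequently the codimension of $K\cap\delta\QQQ(\Gg)$ in $K$ is at most $\Rdim\HHH^2(\Gam)<\infty$.

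The only genuinely nontrivial input is the identification $K=\ppi^*\HHH^2_b(\Gam)$, which comes from Monod's sequence. Beyond that, the main point needing care is the naturality of $\delta$ and of the comparison maps with respect to $\ppi$. I expect this to be a routine cochain-level verification and do not anticipate a real obstacle.
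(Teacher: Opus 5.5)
Your proof is correct. Case (1) is the paper's argument: the paper sets $K_0=\ppi^*\delta\QQQ(\Gam)$ and uses $K=\ppi^*\HHH^2_b(\Gam)$ to see that $K_0$ is finite-codimensional in $K$. It then observes $K_0=\delta\ppi^*\QQQ(\Gam)\subseteq\delta\QQQ(\Gg)$. Injectivity of $\ppi^*$ is not even needed there, since a surjection maps a finite-codimensional subspace onto a finite-codimensional one.

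Case (2) is where you diverge. The paper does not treat (2) separately. It reduces (2) to (1) by applying Proposition~\ref{prop_Q_exact} to $\Gam$: $c_{\Gam}$ embeds $\Coker(\delta\colon\QQQ(\Gam)\to\HHH^2_b(\Gam))$ into $\HHH^2(\Gam)$. You instead apply Proposition~\ref{prop_Q_exact} to $\Gg$ and use naturality of the comparison map.

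The paper's route is more economical: one mechanism suffices, and only naturality of $\delta$ under $\ppi$ is used. Your route buys an exact identification, $K\cap\delta\QQQ(\Gg)=\Ker(c_{\Gg}|_K)$. Hence the codimension of $K\cap\delta\QQQ(\Gg)$ in $K$ is precisely $\Rdim c_{\Gg}(K)=\Rdim\ppi^*c_{\Gam}(\HHH^2_b(\Gam))$.

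Now $c_{\Gam}(\HHH^2_b(\Gam))\cong\Coker(\delta\colon\QQQ(\Gam)\to\HHH^2_b(\Gam))$, and it also lies inside $\HHH^2(\Gam)$. So this single computation already handles both (1) and (2). It shows that (B) holds exactly when $\ppi^*c_{\Gam}(\HHH^2_b(\Gam))$ is finite-dimensional. That criterion can hold even when both (1) and (2) fail, for instance whenever $\HHH^2(\Gg)=0$.
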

\begin{proof}
By Proposition~\ref{prop_Q_exact}, we have an exact sequence
\[ 0 \to \HHH^1(\Gam) \to \QQQ(\Gam) \xrightarrow{\delta} \HHH^2_b(\Gam) \xrightarrow{c_\Gam} \HHH^2(\Gam).\]
Hence, (2) implies (1). Thus it suffices to see that (1) implies (B).

Suppose (1). Consider the following commutative diagram:
\[ \xymatrix{
& \QQQ(\Gamma) \ar[r]^{\ppi^*} \ar[d]^{\delta} & \QQQ(G) \ar[r]^{i^*} \ar[d]^{\delta} & \QQQ(N)^G \ar[d]^{\delta} \\
0 \ar[r] & \HHH^2_b(\Gamma) \ar[r]^{\ppi^*} & \HHH^2_b(G) \ar[r]^{i^*}  \ar@{}[lu]|{\circlearrowright}& \HHH^2_b(N)^G \ar@{}[lu]|{\circlearrowright}
}\]
Then  the  lower horizontal sequence of this diagram is exact (see Theorem~\ref{theorem_Monod_sequence}). In particular, $K=\ppi^{\ast}(\HHH^2_b(\Gamma))$. Let  $K_0=\ppi^* \delta \QQQ(\Gamma)$. Then, $K_0$ is finite-codimensional in $K$ by assumption (1). We also have
\[ K_0 =  \ppi^* \delta \QQQ(\Gamma) = \delta \ppi^*\QQQ(\Gamma) \subseteq \delta \QQQ(G).\]
Hence condition (B) is satisfied.
\end{proof}

We employ the following lemma in functional analysis.

\begin{lemma}\label{lem=findimquotient}
Let $X$ be a real normed space. Let $U$ and $V$ be two norm-closed subspaces of $X$. Assume that $U\cap V$ is finite-codimensional in $V$. Then, the image of $U$ under the quotient map $X\twoheadrightarrow X/V$ is norm-closed.
\end{lemma}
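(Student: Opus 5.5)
The plan is to reduce the claim to the closedness of $U+V$ in $X$, and then exploit finite-dimensionality twice: once to split off a finite-dimensional complement, and once to use the fact that finite-dimensional subspaces of normed spaces are closed. No completeness of $X$ is needed, which matters because the lemma is stated for a general real normed space.

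First, let $q\colon X\twoheadrightarrow X/V$ be the quotient map. Since $V$ is norm-closed, $X/V$ is a genuine normed space and $q$ is continuous. The preimage $q^{-1}(q(U))$ equals $U+V$. I claim that $q(U)$ is closed in $X/V$ if and only if $U+V$ is closed in $X$.
\begin{itemize}
\item The direction ``$q(U)$ closed $\Rightarrow$ $U+V$ closed'' is immediate from continuity of $q$.
\item For the converse, suppose $U+V$ is closed and $[x_n]\in q(U)$ converges to $[x]$. By the definition of the quotient norm we may pick representatives $u_n+v_n$ of $[x_n]$, with $u_n\in U$, $v_n\in V$, such that $u_n+v_n\to x$ in $X$. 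Closedness of $U+V$ then gives $x\in U+V$, so $[x]\in q(U)$.
\end{itemize}

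Second, I use the codimension hypothesis. Since $U\cap V$ has finite codimension in $V$, choose a finite-dimensional subspace $F\subseteq V$ with $V=(U\cap V)+F$. Then $U+V=U+F$, so it suffices to show that $U+F$ is closed.

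Third, I prove that $U+F$ is closed by passing to $X/U$. Let $r\colon X\twoheadrightarrow X/U$ be the quotient map. Since $U$ is norm-closed, $X/U$ is a normed space and $r$ is continuous. The image $r(F)$ is a finite-dimensional subspace of the normed space $X/U$, hence complete, hence closed. Therefore $U+F=r^{-1}(r(F))$ is closed in $X$, which completes the argument.

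I expect no serious obstacle here. The only point requiring care is the first step, where one must argue with representatives and the quotient norm rather than invoke any completeness-based result, since $X$ is not assumed complete.
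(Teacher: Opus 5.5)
Your proof is correct and follows essentially the paper's route: reduce to the closedness of $U+V$, then realize $U+V$ as the preimage under $X\twoheadrightarrow X/U$ of a finite-dimensional, hence closed, subspace. The only cosmetic difference is that you choose an explicit finite-dimensional $F\subseteq V$ with $U+V=U+F$, whereas the paper observes directly that the image of $V$ in $X/U$ is finite-dimensional, being isomorphic to $V/(U\cap V)$.
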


\begin{proof}
By the definition of the quotient topology, the conclusion holds if and only if $U+V$ is a norm-closed subspace of $X$; this holds if and only if the image of $V$ under $X\twoheadrightarrow X/U$ is norm-closed. The last condition holds because this image is a finite-dimensional subspace of a normed space $X/U$.
\end{proof}

\begin{theorem} \label{thm=closedrange2}
If both the conditions {\rm (A)} and {\rm (B)} are satisfied, then the image of $\widehat{i^*} \colon  (\QQQ(G) / \HHH^1(G) ,\DD_{\Gg}) \to (\QQQ(\Ng)^G / \HHH^1(N)^G, \DD_{\Ng})$ is norm-closed.
\end{theorem}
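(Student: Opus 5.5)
Let $X=\HHH^2_b(\Gg)$, a Banach space by Theorem~\ref{H2b_Banach}, and let $Q=\delta\QQQ(\Gg)\subseteq X$. By Proposition~\ref{prop_Q_exact} and Lemma~\ref{lem=eqnorms}, $\delta$ induces a linear isomorphism $(\QQQ(\Gg)/\HHH^1(\Gg),\DD_{\Gg})\cong (Q,\|\cdot\|)$ with norms equivalent up to a factor $2$. In particular $Q$ is complete, by Corollary~\ref{cor=QNBanach} with $\Ng=\Gg$, hence norm-closed in $X$. Alternatively, $Q=\Ker(c_{\Gg})$, and closedness follows from the estimate $|c(g,h)|\le\|c\|$ on bounded cocycles representing zero; I will simply use the completeness argument. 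Similarly, $\delta$ induces an isomorphism $\QQQ(\Ng)^{\Gg}/\HHH^1(\Ng)^{\Gg}\cong \delta\QQQ(\Ng)^{\Gg}\subseteq \HHH^2_b(\Ng)^{\Gg}$, again with norms equivalent up to factor $2$. The commutative square $\delta\circ i^*=i^*\circ\delta$ then identifies $\widehat{i^*}$ with the restriction of $i^*\colon\HHH^2_b(\Gg)\to\HHH^2_b(\Ng)^{\Gg}$ to $Q$, mapping into $\delta\QQQ(\Ng)^{\Gg}$. Since the norms are equivalent, it suffices to show that $i^*(Q)$ is norm-closed in $\HHH^2_b(\Ng)^{\Gg}$: a closed subset of the ambient space is closed in $\delta\QQQ(\Ng)^{\Gg}$, and closedness transfers across equivalent norms.

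Next, factor $i^*$ through $X/K$, where $K=\Ker(i^*)$ is norm-closed because $i^*$ is bounded. By (A), the image $i^*X$ is closed in the Banach space $\HHH^2_b(\Ng)^{\Gg}$, hence itself Banach. The induced map $\bar i\colon X/K\to i^*X$ is then a continuous linear bijection between Banach spaces, so by Proposition~\ref{prop=FA}~(2) it is a topological isomorphism. Consequently $i^*(Q)=\bar i(\pi(Q))$ is closed in $i^*X$, and therefore in $\HHH^2_b(\Ng)^{\Gg}$, provided $\pi(Q)$ is closed in $X/K$, where $\pi\colon X\to X/K$ is the quotient map.

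To get this, I apply Lemma~\ref{lem=findimquotient} with $U=Q$ and $V=K$. Both are norm-closed by the above, and condition (B) says exactly that $U\cap V$ is finite-codimensional in $V$. The lemma then gives that $\pi(Q)$ is closed, which finishes the argument.

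The main obstacle I expect is the bookkeeping of identifications. One must check that $\delta\QQQ(\Ng)^{\Gg}$ sits in $\HHH^2_b(\Ng)^{\Gg}$, with the invariance of $\delta\mu$ coming from the $\Gg$-invariance of $\mu$, and that the norm equivalence of Lemma~\ref{lem=eqnorms} applies to $\Ng$ as well as to $\Gg$, since it is a statement about an arbitrary group. One must also confirm that $Q$ is genuinely closed in $\HHH^2_b(\Gg)$, using completeness under the equivalent norm. The functional-analytic core itself is short.
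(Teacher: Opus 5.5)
Your proposal is correct and follows essentially the same route as the paper. You transfer everything to bounded cohomology via $\delta$ and Lemma~\ref{lem=eqnorms}, apply Lemma~\ref{lem=findimquotient} with $U=\delta\QQQ(\Gg)$ and $V=K$ using (B), and use (A) together with the open mapping theorem to see that $\HHH^2_b(\Gg)/K\to\HHH^2_b(\Ng)^{\Gg}$ is a closed embedding. You also make explicit two points the paper leaves implicit: that $K$ is norm-closed, and the open-mapping argument behind the closed embedding.
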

\begin{proof}
By Lemma~\ref{lem=eqnorms} we have
\begin{align} \label{eq=eqnorms}
\| [\delta \mu]\| \le D_\Ng(\mu) \le 2 \| [\delta \mu]\|
\end{align}
for every $\mu \in \QQQ(N)^G$. Let $\delta' \colon \QQQ(N)^G / \HHH^1(N)^G \to \HHH^2_b(N)^G$ denote the injective map induced by $\delta \colon \QQQ(N)^G \to \HHH^2_b(N)^G$.

Regard $\delta'$ as a map from a Banach space $(\QQQ(N)^G / \HHH^1(N)^G,\DD_{\Ng})$ (Corollary~\ref{cor=QNBanach}) to a Banach space $(\HHH^2_b(\Ng)^{\Gg},\|\cdot\|)$ (Theorem~\ref{H2b_Banach}). Then, by  \eqref{eq=eqnorms}, the subspace $\delta (\QQQ(N)^G) = \delta'(\QQQ(N)^G/\HHH^1(N)^G)$ is norm-closed in $\HHH^2_b(N)^G$. In the same manner, we obtain the norm-closedness of  the image of $ \delta' \colon \QQQ(G) / \HHH^1(G) \to \HHH^2_b(G)$ in $\HHH^2_b(G)$.

By condition (B), Lemma~\ref{lem=findimquotient} implies that the image  $E$  of $\delta \QQQ(G) = \delta'(\QQQ(G) / \HHH^1(G))$ under the quotient map $\HHH^2_b(G)\twoheadrightarrow \HHH^2_b(G) / K$ is norm-closed. By condition (A), $i^{\ast}{}' \colon \HHH^2_b(G) / K \to \HHH^2_b(N)^G$ is a closed embedding. Then, $i^* \delta \QQQ(G)$ is norm-closed in $\HHH^2_b(N)^G$ because it is the image of the closed subset  $E$  under the closed embedding $i^{\ast}{}' \colon \HHH^2_b(G) / K \to \HHH^2_b(N)^G$. Therefore, since
\[ 
  i^* \delta \QQQ(G) = \delta i^* \QQQ(G) \subseteq \delta \QQQ(N)^G =  \delta'(\QQQ(N)^G / \HHH^1(N)^G),
\]
$\delta i^* \QQQ(G)$ is norm-closed in $\delta' (\QQQ(N)^G / \HHH^1(N)^G)$. Since $\delta$ is a closed embedding, we conclude that $\widehat{i^*} (\QQQ(G) / \HHH^1(G))$ is norm-closed in $(\QQQ(\Ng)^G / \HHH^1(N)^G, \DD_{\Ng})$.
\end{proof}

\begin{proof}[Proof of Theorem~\textup{\ref{thm=closedrange}}]
It follows from Theorem~\ref{thm=closedrange2}, together with  Lemmas~\ref{lem=a->A} and \ref{lem=b->B}. 
\end{proof}

\section*{Acknowledgement}
The fifth-named author is grateful to Atsushi Katsuda for his question related to Proposition~\ref{prop=attain}. 
The first-named author, the second-named author, the fourth-named author and the fifth-named author are partially supported by JSPS KAKENHI Grant Number JP25K06994, JP24K16921, JP23K12975, and JP26K06799, respectively.
The third-named author is partially supported by JSPS KAKENHI Grant Number JP23K12971 and Research Fellowship Promoting International Collaboration, The Mathematical Society of Japan.


\bibliography{reference}
\bibliographystyle{abbrv}

\end{document}